\newtheorem{thm}{Theorem}[section]
\newtheorem{prop}[thm]{Proposition}
\newtheorem{lm}[thm]{Lemma}
\newtheorem{conj}[thm]{Conjecture}
\theoremstyle{definition}
\newtheorem{definition}[thm]{Definition}
\newtheorem{rmk}[thm]{Remark}
\def\Ocal{\mathcal O}
\def\Dcal{\mathcal D}
\def\Mcal{\mathcal M}
\def\Bcal{\mathcal B}
\def\Bscr{\mathscr B}
\def\Rscr{\mathscr R}
\def\Pbb{\mathbb P}
\def\Rbb{\mathbb R}
\def\Cbb{\mathbb C}
\def\Zbb{\mathbb Z}
\def\Tbb{\mathbb T}
\def\Qbb{\mathbb Q}
\def\Gbb{\mathbb G}
\def\m{\mathrm m}
\def\Spec{\mathrm{Spec\ }}
\def\reg{\mathrm{reg}}
\def\Res{\mathrm{Res}}
\def\div{\mathrm{div}}
\def\Corr{\mathrm{Corr}}
\def\CHM{\mathrm{CHM}}
\def\CH{\mathrm{CH}}
\def\Hom{\mathrm{Hom}}
\def\id{\mathrm{id}}
\def\Gal{\mathrm{Gal}}
\def\dR{\mathrm{dR}}
\def\SmProj{\mathrm{SmProj}}
\def\pr{\mathrm{pr}}
\newcommand{\et}{\textnormal{\'et}}
\numberwithin{equation}{section}
\begin{document}

\title{The Mahler measure of exact polynomials in three variables}

\author{Trieu Thu Ha}
\address{UMPA, ÉNS de Lyon, 46, allée d’Italie, 69007, France}
\email{thu-ha.trieu@ens-lyon.fr}
\thanks{This work was performed within the framework of the LABEX MILYON (ANR-10-LABX-0070) of Université de Lyon, within the program ``Investissements d'Avenir'' (ANR-11-IDEX-0007) operated by the French National Research Agency
(ANR)}
\keywords{Mahler's measure, regulators, polylogarithms, motivic cohomology, residues.}
\subjclass[2020]{Primary:  19F27; Secondary: 11G55, 11R06, 19E15}

\begin{abstract}
We prove that under certain explicit conditions, the Mahler measure of a three-variable polynomial can be expressed in terms of elliptic curve $L$-values and Bloch-Wigner dilogarithmic values, conditionally on Beilinson's conjecture. In some cases, these dilogarithmic values simplify to Dirichlet $L$-values. The proof involves a construction of an element in $K_4^{(3)}$ of a smooth projective curve over a number field. This generalizes a result of Lalín \cite{Lal15} for the polynomial $z + (x+1)(y+1)$. We apply our method to several other Mahler measure identities conjectured by Boyd and Brunault.
\end{abstract}

\maketitle

\section*{Introduction}
Let $P(x_1, \dots, x_n) \in \Cbb[x_1^{\pm 1}, \dots, x_n^{\pm 1}]$ be a nonzero Laurent polynomial. The (logarithmic) Mahler measure of $P$ is defined by 
\begin{equation}\label{19}
\begin{aligned}
 \m(P) &= \dfrac{1}{(2\pi i)^n} \int_{\Tbb^n} \log |P(x_1, \dots, x_n)| \ \dfrac{d x_1}{x_1} \wedge \cdots \wedge \dfrac{d x_n}{x_n},
\end{aligned}
\end{equation}
where $\Tbb^n : |x_1|=\dots = |x_n|=1$ is the $n$-dimensional torus. This quantity was firstly introduced by Mahler \cite{Mah62} in 1962. 

In 1997, Deninger \cite{Den97} linked the Mahler measure of polynomials $P(x_1, \dots, x_n)$ under certain conditions to the motivic cohomology of $V_P$, where $V_P$ is the zero locus of $P$ in $\Cbb^n$. This allowed him to place the Mahler measure in the very general framework of Beilinson's conjectures on special values of $L$-functions. More precisely, Deninger defined the following chain
$$\Gamma = \{(x_1, \dots, x_n) \in \Cbb^n : P(x_1, \dots, x_n)=0, |x_1|=\dots = |x_{n-1}|=1, |x_n|\ge 1\}.$$
He showed that if $\Gamma$ is contained in the regular locus $V_P^\reg$ of $V_P$, then there is a differential $(n-1)$-form $\eta(x_1, \dots, x_n)$ on $\Gbb_m^n$ such that its restriction to $V_P$ represents the regulator of the Milnor symbol $\{x_1, \dots, x_n\}$, and we have
\begin{equation}\label{1}
    \m(P) = \m(\tilde{P}) + \dfrac{(-1)^{n-1}}{(2\pi i)^{n-1}} \int_\Gamma \eta(x_1,\dots, x_n),
\end{equation}
where $\tilde{P}$ is the leading coefficient of $P$ seen as a polynomial in $x_n$. 

From now on we assume that $P$ has rational coefficients and $\Gamma$ is contained in $V_P^\reg$. If $\partial \Gamma  = \emptyset$, then $\Gamma$ is a cycle. Then Deninger found out that in certain situations, identity \eqref{1} together with Beilinson's conjecture imply that $\m(P)$ can be expressed in terms of the $L$-function of the motive $H^{n-1}(\overline{V_P})$, where $\overline{V_P}$ is a smooth compactification of $V_P$.  As an example, he showed that under the Beilinson conjecture, 
\begin{equation}\label{20}
    \m\left(x+\dfrac{1}{x}+y+\dfrac{1}{y}+1\right) \stackrel{?}{=} L'(E_{15}, 0),
\end{equation}
where $E_{15}$ is the elliptic curve (of conductor 15) defined by $x+1/x+y+1/y+1 = 0$. In this example, $\partial \Gamma \neq \emptyset$ but a symmetry argument reduces this to the case $\partial \Gamma = \emptyset$. It was completely shown (without assuming the Beilinson conjecture) by Rogers and Zudilin \cite{RZ14} in 2014.

Boyd \cite{Boy98} conjectured, based on numerical evidence, that
\begin{equation}\label{21}
    \m \left(x + \dfrac{1}{x} + y + \dfrac{1}{y} + k\right)  \stackrel{?}{=} r_k \ L'(E_{N(k)}, 0),
\end{equation}
where $k \in \Zbb \setminus \{0, \pm 4\}$, $r_k \in \Qbb^\times$ and $E_{N(k)}$ is the elliptic curve (of conductor $N(k)$) obtained as a smooth compactification of the zero set of $P_k = x + 1/x + y + 1/y + k$. Until now, identity (\ref{21}) is only proved for a finite number of $k$:
$$k \in \{-4\sqrt{2}, -2\sqrt{2}, 1, 2, 3, 2\sqrt{2},  3\sqrt{2}, 5, 8, 12, 16, i, 2i, 3i, 4i, \sqrt{2}i\},$$
by the works of Brunault, Lal\'{i}n, Rodriguez-Villegas, Rogers, Samart, and Zudilin (see \cite{Bru16}, \cite{Lal10}, \cite{LSZ16}, \cite{LR07}, \cite{Rod99}, \cite{RZ12}), \cite{RZ14}, \cite{Zud14}.

The case $\partial \Gamma \neq \emptyset$ is more difficult, Maillot \cite{Mai03} suggested we should look at the variety 
\begin{equation}\label{maillot}
    W_P := V_P \cap V_{P^*},
\end{equation} 
where $P^* (x_1,\dots ,
x_n) = \bar P(x_1^{-1}, \dots, x_n^{-1})$. We call $W_P$ the \textit{Maillot variety}. If $P$ is an \textit{exact} polynomial, i.e.,  $\eta = d \omega$, where $\omega$ is a differential form on $V_{P}^{\reg}$, then Stokes' theorem gives
$$\m(P) = \m(\tilde{P}) + \dfrac{(-1)^{n-1}}{(2\pi i)^{n-1}} \int_{\partial \Gamma} \omega.$$
Moreover, $\partial \Gamma$ is contained in $W_P$, hence we can hope that $\m(P)$ is related to the cohomology of $W_P$. In this direction, Lalín showed the following result. 
\begin{thm}[{\cite[Theorem 2]{Lal15}}]\label{lalinthm}
  Assume that  $P\in \Qbb[x, y, z]$ is irreducible and satisfies the following conditions:
\begin{enumerate}
    \item[(i)] $W_P$ is birationally equivalent to an elliptic curve $E$ over $\Qbb$,
    \item[(ii)]  $\partial \Gamma$ defines  an element of $H_1(E(\Cbb), \Zbb)^+$, the invariant part of $H_1(E(\Cbb), \Zbb)$ under the action induced by the complex conjugation on $E(\Cbb)$,
    \item[(iii)] $ x \wedge y \wedge z = \sum_j f_j \wedge (1-f_j) \wedge g_j \text{ in } \bigwedge^3 \Qbb(V_P)^\times$, for some functions $f_j, g_j \in  \Qbb(V_P)^\times$,
    \item[(iv)] $x \wedge y \wedge z = 0 \text{ in } \bigwedge^3 \Qbb(E)^\times$,
    \item[(v)] $\sum_j v_p(g_j) \{f_j(p)\}_2 = 0 \text{ in }  \Zbb[\Pbb^1_{\bar \Qbb}]/R_2(\bar \Qbb) \text{ for all } p \in E(\bar \Qbb)$, 
\end{enumerate}
where $R_2(\bar \Qbb)$ is the subgroup generated by the five-term relations (see equation (\ref{130})), and $v_p(g_j)$ is the vanishing order at $p$ of $g_j$ seen as a function on $E$. Then under Beilinson's conjecture, 
\begin{equation}\label{lalin}
    \m(P) = m(\tilde P) + a\cdot L'(E, -1), \  a \in \Qbb.
\end{equation}  
\end{thm}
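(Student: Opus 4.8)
The plan is to run the Mahler measure through Deninger's formula, the exactness granted by (iii) and Stokes' theorem, and finally a Beilinson regulator computation on $E$. Specializing \eqref{1} to $n = 3$ gives
\[
  \m(P) = \m(\tilde P) + \frac{(-1)^3}{(2\pi i)^2}\int_\Gamma \eta(x,y,z),
\]
where $\eta(x,y,z)$ is the Deninger $2$-form on $\Gbb_m^3$ whose restriction to $V_P$ represents the regulator of the Milnor symbol $\{x,y,z\}$. Hypothesis (iii) writes $x\wedge y\wedge z$ in $\bigwedge^3\Qbb(V_P)^\times$ as the image of $\xi := \sum_j\{f_j\}_2\otimes g_j \in B_2(\Qbb(V_P))\otimes\Qbb(V_P)^\times$, and the regulator of such an element is represented on $V_P^{\reg}$ by $\d\omega$, where $\omega := \sum_j \omega_{f_j,g_j}$ and $\omega_{f,g}$ is the standard Goncharov $1$-form attached to $\{f\}_2\otimes g$ --- built from the Bloch--Wigner dilogarithm $D(f)$ together with $\log|g|$ and the arguments of $f$ and $1-f$. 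A form-level computation showing that $\sum_j\d\omega_{f_j,g_j}$ and $\eta(x,y,z)$ agree on $V_P^{\reg}$ up to an exact form with harmless boundary over $\partial\Gamma$ makes $P$ exact, so Stokes' theorem yields $\int_\Gamma\eta(x,y,z) = \int_{\partial\Gamma}\omega$.

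Next I would realize this integral as a regulator pairing on $E$. By (i) the chain $\partial\Gamma \subset W_P = V_P\cap V_{P^*}$ maps, under the birational equivalence with $E$, to a $1$-cycle on $E(\Cbb)$; once one checks that $\partial\Gamma$ stays away from the finitely many points of indeterminacy and from the supports of the divisors of the $f_j$ and $g_j$ (using $\Gamma \subset V_P^{\reg}$ and (v)), this defines a class $[\partial\Gamma] \in H_1(E(\Cbb),\Zbb)$, lying by (ii) in the invariant part $H_1(E(\Cbb),\Zbb)^+$. Restricting everything to $E$, the identity of (iii) together with (iv) reads $\sum_j f_j\wedge(1-f_j)\wedge g_j = 0$ in $\bigwedge^3\Qbb(E)^\times$, so $\xi_E := \sum_j\{f_j\}_2\otimes g_j$ (now with $f_j,g_j$ the restricted functions) is a cocycle in the weight-$3$ Goncharov complex $B_3(\Qbb(E)) \to B_2(\Qbb(E))\otimes\Qbb(E)^\times \to \bigwedge^3\Qbb(E)^\times$, while (v) is exactly the vanishing of its residue $\sum_j v_p(g_j)\{f_j(p)\}_2$ in $\Zbb[\Pbb^1_{\bar\Qbb}]/R_2(\bar\Qbb)$ at every $p\in E(\bar\Qbb)$. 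Via Goncharov's description of $K_4^{(3)}$ through the Bloch complex (valid in this weight), $\xi_E$ thus defines a class in $K_4^{(3)}(E) = H^2_{\Mcal}(E,\Qbb(3))$, defined over $\Qbb$ since $P$ is; passage to an integral model $\Ecal/\Zbb$ is unobstructed, the relevant localization groups $H^1_{\Mcal}(\Ecal_p,\Qbb(2))$ and $H^0_{\Mcal}(\Ecal_p,\Qbb(2))$ on the special fibres vanishing rationally ($K_3$ and $K_4$ of a curve over a finite field being finite), so that $H^2_{\Mcal}(\Ecal,\Qbb(3)) \cong H^2_{\Mcal}(E,\Qbb(3))$.

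The analytic heart is to identify $\omega|_E$ with a representative of the Beilinson regulator of $\xi_E$. Composing (v) with the Bloch--Wigner function gives $\sum_j v_p(g_j)D(f_j(p)) = 0$ at each $p$, so the periods of $\omega|_E = \sum_j \omega_{f_j,g_j}|_E$ around the zeros and poles of the $g_j$ vanish and $\omega|_E$ extends to a smooth closed $1$-form on $E(\Cbb)$ whose class is $r_{\Dcal}(\xi_E) \in H^2_{\Dcal}(E_{/\Rbb},\Rbb(3)) \cong H^1(E(\Cbb),\Rbb(2))^+$, a one-dimensional $\Rbb$-vector space. Hence $\int_{\partial\Gamma}\omega = \langle r_{\Dcal}(\xi_E),[\partial\Gamma]\rangle$, where by (ii) the cycle $[\partial\Gamma]$ sits in the summand of $H_1(E(\Cbb),\Zbb)$ pairing nondegenerately with $H^2_{\Dcal}(E_{/\Rbb},\Rbb(3))$. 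Finally, Beilinson's conjecture for $L(E,3)$ asserts that $r_{\Dcal}$ is an isomorphism $K_4^{(3)}(\Ecal)\otimes\Rbb \xrightarrow{\sim} H^2_{\Dcal}(E_{/\Rbb},\Rbb(3))$ under which the $\Qbb$-structure has covolume a rational multiple of $L^*(E,3)$; the functional equation relating $s = 3$ and $s = -1$ converts this to a rational multiple of $L'(E,-1)$, all the powers of $\pi$ and $2\pi i$ (from the functional equation, from the comparison of $\Qbb$-structures, and from the $(2\pi i)^{-2}$ of Deninger's formula) cancelling. Combining this with Deninger's formula gives $\m(P) = \m(\tilde P) + a\,L'(E,-1)$ with $a \in \Qbb$.

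I expect the real difficulty to be twofold. First, the exactness bookkeeping of the opening step: showing that $\eta(x,y,z)$ and $\sum_j\d\omega_{f_j,g_j}$ differ by an exact form, and that its boundary term over $\partial\Gamma$ vanishes or is otherwise absorbed --- here the combinatorics of the five-term relation and the precise normalizations of the Goncharov forms must be handled with care. Second, the analytic identification of the transcendental integral $\int_{\partial\Gamma}\omega$ with the Beilinson regulator pairing: one must ensure that the birational, rather than biregular, nature of the map $W_P\to E$ introduces no spurious contributions, which means keeping track of the divisors of the $f_j$ and $g_j$ on $W_P$ throughout and verifying that $\partial\Gamma$ avoids their supports, so that (v) and (ii) genuinely guarantee both that $\xi_E$ is a well-defined motivic class and that its regulator is fully captured --- not merely partially seen --- by integrating $\omega$ over $\partial\Gamma$.
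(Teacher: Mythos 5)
Your outline reproduces, in the special case covered by Lal\'in's hypotheses, exactly the strategy this paper develops for its generalization (Theorem \ref{0}): Deninger's formula, exactness from (iii) via $\eta(f,1-f,g)=d\rho(f,g)$, Stokes to reduce to $\int_{\partial\Gamma}\rho$, a cocycle in Goncharov's weight-3 complex on the elliptic curve with residues killed by (v), a class in $K_4^{(3)}(E)$, and Beilinson's conjecture applied to the pairing with $[\partial\Gamma]\in H_1(E(\Cbb),\Zbb)^+$. The one structural difference is welcome and legitimate: you use hypothesis (iv) to make $\xi_E=\sum_j\{f_j\}_2\otimes g_j$ a cocycle directly, whereas the paper drops (iv) and instead symmetrizes by the involution $\tau$, working with $\lambda=\xi+\xi^*$ on the normalization of $W_P$; in Lal\'in's setting your shortcut is exactly what is available.

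The step you should not treat as automatic is the identification of $\int_{\partial\Gamma}\omega$ with the Beilinson regulator pairing $\langle \reg_E(\xi_E),[\partial\Gamma]\rangle$. Two issues hide here. First, Goncharov's complex is not known to compute $K_4^{(3)}$; what exists is De Jeu's map $H^2(\Gamma(F,3))\to H^2_\Mcal(F,\Qbb(3))$, defined only up to a universal sign and up to a decomposable correction in $H^1_\Mcal(k,\Qbb(2))\cup F^\times_\Qbb$. Second, and more seriously, the comparison between the Beilinson regulator of that motivic class and the explicit Goncharov form $\rho$ is a priori only an identity of pairings against holomorphic $1$-forms $\bar\omega$ on $E(\Cbb)$ (De Jeu, Goncharov); upgrading it to an identity of integrals over the \emph{loop} $\partial\Gamma$ is not formal, because decomposable classes $\{z\}\cup f$ have regulator forms of the type $D(z)\,d\arg f$ whose periods over loops need not vanish. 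The paper's Lemma \ref{integral} closes this gap by matching residues at all points of $S(\Cbb)$, invoking the injectivity of $\eta\mapsto(\omega\mapsto\int\eta\wedge\bar\omega)$ on $H^1(E(\Cbb),\Rbb(2))^+$, and disposing of an exact term of logarithmic growth via Stokes. Your proposal asserts the identification outright; without an argument of this kind the constant relating $\int_{\partial\Gamma}\rho$ to the regulator pairing could fail to be rational, which is precisely what the conclusion $a\in\Qbb$ requires.
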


Note that the condition (3) implies that $P$ is exact (see Remark \ref{6.3}). In this article, we relax Lalín's conditions in order to deal with Mahler measure identities which are more general than (\ref{lalin}), for example, containing also Dirichlet $L$-values. We only assume that $W_P$ is of genus 1 and we do not require the conditions (iv)-(v) above. Recall that the Bloch-Wigner dilogarithm function $D : \Pbb^1(\Cbb) \to \Rbb$ is defined by
\begin{equation}\label{dilog}
        D(z) = \begin{cases}
            \mathrm{Im}\left(\sum_{k=1}^\infty \frac{z^k}{k^2}\right) + \arg (1-z) \log |z| &(|z| \le 1),\\
            -D(1/z) &(|z| \ge 1).
        \end{cases}
\end{equation}
For any field $F$, we denote by $\Bcal(F)$ the Bloch group of $F$ tensored with $\Qbb$ (see \cite[Section 2]{Zag91}). Let $\tau$ be the involution of $\Gbb^3_m$  given by $(x, y, z) \mapsto (1/x, 1/y, 1/z)$. Since $P$ has rational coefficients, $\tau$ induces an involution of $W_P$. For $A$ is a abelian group, we denote by $A_\Qbb := A \otimes \Qbb$. Let us state our main theorem here.

\begin{thm}\label{0}
Assume $P \in \Qbb[x, y, z]$ is irreducible and that $W_P$ is a curve of genus 1. Let $C$ be the normalization of $W_P$. Suppose that 
\begin{equation}
     x \wedge y \wedge z = \sum_j f_j \wedge (1-f_j) \wedge g_j \text{ in } \bigwedge^3 \Qbb(V_P)^\times_\Qbb,
\end{equation}
for some functions $f_j, g_j$ on $V_P$. Let $S$ be the closed subscheme of $C$ consisting of the zeros and poles of the functions $g_j$ and $ g_j \circ \tau$  on $C$ for all $j$. Then for $p \in S$, 
\begin{equation*}
    u_p := \sum_j v_p(g_j) \{f_j(p)\}_2 + v_p(g_j \circ \tau) \{f_j \circ \tau (p)\}_2
\end{equation*}
define elements in the Bloch group $\Bcal(\Qbb(p))$, where $\Qbb(p)$ is the residue field of $C$ at $p$.

Assume that the Deninger chain $\Gamma$ is contained in $V_P^\reg$ and that its boundary $\partial \Gamma$ is contained in $W_P^\reg$, then $\partial \Gamma$ defines an element in $H_1(C(\Cbb), \Zbb)^+$. If $u_p = 0$ for all $p \in S$, then under Beilinson's conjecture for the curve $C$ (see 
 Conjecture \ref{BeiConj}), we have
\begin{equation*}
    \m(P) = \m(\tilde P) + a \cdot L'(E, -1), \qquad (a \in \Qbb),
\end{equation*}
where $E$ is the Jacobian of $C$. Otherwise, let $S'$ be the subset of  $S$ consisting of the points  $p$ such that $u_p \neq 0$. Let $K$ be the splitting field of $S'$ in $\Cbb$. Let $\Ocal$ be any fixed point in $S'(K)$. Assume that the difference of any two geometric points in $S'(K)$ has finite order dividing $N$ in $E(K)$, then under Beilinson's conjecture for the curve $C$ (see Conjecture \ref{BeiConj}),
\begin{equation}\label{123}
     \m(P) = \m (\tilde P) + a \cdot L'(E,-1) + \dfrac{1}{4N\pi} \sum_{q \in S'(K)\setminus \{\Ocal\}}b_q\cdot D(u_q), \qquad (a \in \Qbb, b_q\in \Zbb),
\end{equation}
where for $q\in S'(K)$ supported on a closed point $p \in S'$, we define $u_q$  to be the image of $u_p$ under the map $\Bcal(\Qbb(p)) \hookrightarrow \Bcal(K)$ induced by the embedding $\Qbb(p) \xhookrightarrow{q} K$.
\end{thm}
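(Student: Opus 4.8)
The plan is to follow Deninger's formula \eqref{1} and reduce everything to a computation of $\int_{\partial\Gamma}\omega$ for a suitable primitive $\omega$ of $\eta$ on $V_P^{\reg}$. First I would use the hypothesis
\[
x\wedge y\wedge z=\sum_j f_j\wedge(1-f_j)\wedge g_j\quad\text{in }\textstyle\bigwedge^3\Qbb(V_P)^\times_\Qbb
\]
to write the regulator $3$-form $\eta$ attached to $\{x,y,z\}$ as $d\omega$, where $\omega$ is built out of the Bloch--Wigner-type one-forms associated to each pair $(f_j,g_j)$ — concretely $\omega=\sum_j D(f_j)\,d\arg g_j - \log|g_j|\,\eta_1(f_j)$ up to the standard normalization, so that this is the three-variable analogue of the exactness statement quoted in Remark~\ref{6.3}. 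Stokes' theorem then gives $\m(P)=\m(\tilde P)+\frac{(-1)^{n-1}}{(2\pi i)^{n-1}}\int_{\partial\Gamma}\omega$, and since $\partial\Gamma\subset W_P^{\reg}$ pulls back to a cycle on $C$ lying in the $+$-eigenspace for complex conjugation (this is where I use that $P$ has rational coefficients and the symmetry $\tau$), the integral only sees the restriction of $\omega$ to $C$.

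Next I would analyze $\omega|_C$. On the curve $C$ the symbol $\{f_j,g_j\}$ lives in $K_2$ of the function field, and the obstruction to $\omega|_C$ representing a genuine $K_2$-regulator class — i.e. to the tame symbols being trivial — is exactly the collection of elements $u_p=\sum_j v_p(g_j)\{f_j(p)\}_2+v_p(g_j\circ\tau)\{f_j\circ\tau(p)\}_2$ in $\Zbb[\Pbb^1_{\bar\Qbb}]/R_2$; the content of the first part of the theorem is that each $u_p$ in fact lies in the Bloch group $\Bcal(\Qbb(p))$, which follows from the Weil reciprocity / Suslin reciprocity identities applied to $\sum_j\{f_j,g_j\}$ together with the $\tau$-symmetrization (the $\tau$-term is forced in because $\partial\Gamma$ is $\tau$-(anti)invariant and we must symmetrize to get something defined over $\Qbb$). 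If all $u_p=0$, then $\omega|_C$ is (cohomologous to) the regulator of an honest element $\xi\in K_4^{(3)}(C)_\Qbb$ — here one invokes the construction of such an element, whose existence is guaranteed precisely by the vanishing of the $u_p$ via the Bloch--Grayson / Goncharov-type exactness of the weight-$3$ complex — and Beilinson's conjecture for $C$ identifies $\int_{\partial\Gamma}\omega$ with a rational multiple of $L'(E,-1)$, $E=\mathrm{Jac}(C)$, giving the first displayed identity.

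If some $u_p\neq0$, the strategy is to correct $\xi$ by boundary terms supported on $S'$: pass to the splitting field $K$, pick a base point $\Ocal\in S'(K)$, and use that every difference $q-\Ocal$ is $N$-torsion in $E(K)$ to write $q-\Ocal$ as a principal divisor $\div(h_q)$ with $h_q^{?}$ controlled, hence to realize the ``extra'' $K_2$-class $\sum_q u_q\otimes(q)$ as the tame symbol of an explicit element whose Beilinson regulator evaluates, via the known formula for the regulator on $K_4^{(3)}$ of a curve in terms of $D\circ(\text{Bloch element})$, to the Bloch--Wigner sum $\frac{1}{4N\pi}\sum_{q\in S'(K)\setminus\{\Ocal\}}b_q D(u_q)$; subtracting this correction puts us back in the situation where the remaining class is a genuine $K_4^{(3)}$-element and Beilinson again supplies the $a\cdot L'(E,-1)$ term, yielding \eqref{123}. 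The main obstacle I expect is the middle step: showing cleanly that $u_p\in\Bcal(\Qbb(p))$ and then producing the element of $K_4^{(3)}(C)_\Qbb$ with the prescribed regulator — i.e. matching the ``diagonal'' part of Goncharov's weight-three complex with the explicit differential form $\omega$ and checking that the only residues that survive are the $u_p$ — together with the bookkeeping over the field $K$ needed to keep the torsion multiple $N$ and the integers $b_q$ explicit in \eqref{123}.
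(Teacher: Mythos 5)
Your overall strategy is the one the paper follows: symmetrize by $\tau$, use Stokes to reduce to the integral over $\partial\Gamma$ of $\sum_j\rho(f_j,g_j)$, build a class in $K_4^{(3)}(C)$ via a weight-3 polylogarithmic complex, kill the residues by a Bloch-type correction using the $N$-torsion hypothesis, and then apply Beilinson. But the mechanism you propose for the step you yourself flag as the obstacle --- showing $u_p\in\Bcal(\Qbb(p))$ --- is not the right one. It is not a Weil/Suslin reciprocity statement about $K_2$-symbols $\{f_j,g_j\}$ (and the relevant residues do not live in $K_2$ at all: they land in $H^1_\Mcal(\Qbb(p),\Qbb(2))\simeq K_3^{(2)}$ of the residue fields). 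The paper instead works in Goncharov's complex $\Gamma(F,3)\colon B_3(F)\to B_2(F)\otimes F^\times_\Qbb\to\bigwedge^3F^\times_\Qbb$, observes that $\lambda=\sum_j\{f_j\}_2\otimes g_j+\{f_j\circ\tau\}_2\otimes(g_j\circ\tau)$ is a degree-2 \emph{cocycle} precisely because $\tau^*(x\wedge y\wedge z)=-\,x\wedge y\wedge z$ makes the two images under $\alpha_3(2)$ cancel, and then uses that Goncharov's residue $\partial_p$ is a morphism of complexes into $\Gamma(\Qbb(p),2)[-1]$, so $u_p=\partial_p(\lambda)$ is automatically a cocycle in weight 2, i.e.\ an element of $\Bcal(\Qbb(p))$. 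No reciprocity law is needed, and the statement holds point by point.

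Beyond this, three steps are genuinely missing from your sketch. First, after adding the correction $\frac{1}{N}\sum_{q\neq\Ocal}u_q\otimes f_q$ with $\div(f_q)=N(\Ocal)-N(q)$, the residues at $q\neq\Ocal$ cancel by construction, but the residue at $\Ocal$ becomes $\sum_{q\in S'(K)}u_q$, and one must prove this vanishes; the paper does so by pushing the localization sequence forward along $C_K\to\Spec K$ (a trace argument), not by any local computation. Second, the corrected class lives over the splitting field $K$, while Beilinson's conjecture is invoked for $C$ over $\Qbb$; one must check that its image in $H^2_\Mcal(C_K,\Qbb(3))$ is $\Gal(K/\Qbb)$-invariant --- which requires knowing that the class is independent of the choice of $f_q$, a fact resting on Borel's theorem that $K_4$ of a number field is torsion --- and then descend by Galois descent for motivic cohomology. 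Third, the passage from $\int_{\partial\Gamma}\rho(\lambda)$ to the Beilinson regulator of the constructed motivic class involves a factor of $2$ coming from De Jeu's comparison of his map with Goncharov's $r_3(2)$ (and a separate argument that the comparison holds for integrals over loops, not just against holomorphic forms), and the integrality $b_q\in\Zbb$ comes from $b_q=\frac{1}{2\pi}\int_{\partial\Gamma}d\arg f_q$ being a winding number; none of this is automatic from your description.
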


The rational number $a$ comes from Beilinson's conjecture, and does not depend on the choice of $\Ocal$, but the integer numbers $b_q$ actually do. However, the $D$-values in identity \eqref{123} are independent of the choice of $\Ocal$. Indeed, when we remove $\Ocal$ from the sum, the complex conjugation of $\Ocal$ maintains the $D$-values in identity \eqref{123}. Now let us use Theorem \ref{0} to investigate several conjectural Mahler measure identities of the following types:

a) \textit{Pure identities}: $\m(P)  \stackrel{?}{=}  a \cdot  L'(E, -1)$ for some $a \in \Qbb^\times$. Table \ref{table:1}  consists of pure identities that we prove up to a rational factor and conditionally on Beilinson's conjecture. Identity (3) in Table \ref{table:1} was studied by Lalín \cite{Lal15}.
\begin{table}[h!]
\centering
\begin{tabular}{ |c||p{9.5cm}|c|c|c| } 
 \hline
 & \multicolumn{1}{c|}{$P$} & $E$ & $a$ & References\\ 
 \hline
 1. & $(1 + x)(1 + y)(x + y) + z$ & $14a4$ & -3 & \cite[p. 81]{BZ20} \\ 
 \hline
 2. & $1 + x + y + z + xy + xz + yz$ & $14a4$ & -5/2 & \cite{Bru20} \\ 
 \hline
 3. & $(x+1)(y+1)+z$  & 15a8 & -2 &  \cite{Boy06} \\ 
 \hline
 4. & $(x+1)^2 + (1-x)(y + z)$ & $20a1$ & -2 & \cite{Boy06}, \cite[p. 81]{BZ20}\\
 \hline
 5. & $1+ (x+1)y +(x-1)z$ & \multirow{4}{*}{$21a1$} & \multirow{4}{*}{-5/4} & \cite{Boy06}\\
 \cline{1-2} \cline{5-5}
 6. & $\frac{1}{2} (x+2) + (x^2+x+1)y + (x^2-1)z $& & & \multirow{3}{*}{\cite{LN23}}\\
 \cline{1-2}
 7. & $ \frac{1}{2} (x^2-2x+2) +(x^4 - x^3 + x^2 -x +1)y + (x^4-x^3+x-1)z$ & & & \\
\cline{1-2}
 8. & $ \frac{1}{2}(x^4 + x+ 2) + (x^5 + x^4 + x + 1)y + (x^5 -1 )z$ & & &\\
 \hline
 9. & $(x+1)^2(y+1) + z$ & $21a4$ & -3/2 & \cite{Boy06}, \cite[p. 81]{BZ20} \\
 \hline
 10. & $(1+x)^2 + y + z$ & $24a4$ & -1 & {\cite{Boy06}}\\
 \hline
 11. & $1 + x + y + z + xy + xz + yz - xyz$ & $36a1$ & -1/2 & \cite{Bru20} \\ 
  \hline
  12. & $1+xy+(1+x+y)z$ & $90b1$ & -1/20 & \cite{Bru20}\\
 \hline
 13. & $(x+1)^2 + (x-1)^2y + z$ & $225c2$ & -1/48 & \cite{Boy06}, \cite{Bru20}\\
 \hline
\end{tabular}
\centering
\caption{Pure identities $\m(P) \stackrel{?}{=} a \cdot L'(E, -1)$.}
\label{table:1}
\end{table}
Notice that Theorem \ref{lalinthm} of Lalín does not apply to  identity (5) in Table \ref{table:1} as  it violates condition (v). Identities (6), (7) and (8) are conjectured by Lalín and Nair in \cite{LN23}, in fact, they showed that by some changes of variables, the Mahler measure of polynomials (5), (6), (7) and (8) are equal. Moreover, from Table \ref{table:1}, we have the following relations (under the Beilinson conjecture)
\begin{equation*}
    \m((1+x)(1+y)(x+y) +z) \stackrel{?}{\sim}_{\Qbb^\times}\m(1+x+y+z+xy+ xz + yz),
\end{equation*}
\begin{equation*}
    \m(1+(x+1)y + (x-1)z)  \stackrel{?}{\sim}_{\Qbb^\times}  \m((x+1)^2(y+1) + z).
\end{equation*}
\noindent Besides, we also give an example that Theorem \ref{0} does not imply (see Section \ref{www}(g)):
\begin{equation}\label{counterex}
\m((1+x)(1+y)+(1-x-y)z) = \dfrac{1}{288}L'(E_{450c1}, -1).
\end{equation}

b) \textit{Identities with Dirichlet $L$-values}:  
    \begin{equation}\label{mixed}
        \m(P(x, y, z)) \stackrel{?}{=} a \cdot L'(E, -1) + \sum_\chi b_\chi \cdot L'(\chi, -1) \qquad (a \in \Qbb, b_\chi \in \Qbb^\times),
    \end{equation}
    where $\chi$ are odd quadratic Dirichlet characters.  In cases where the coefficient $a$ is nonzero, such identities are referred to as \textit{mixed identities}. Table \ref{mix1} consists of mixed identities we prove (up to rational factors) under Beilinson's conjecture for genus 1 curves (see Sections \ref{ly}(a) and \ref{ly}(e)).
\begin{table}[h!]
\centering
\begin{tabular}{ |c||p{7cm}|c|c|c|c|c|c| } 
 \hline
 & \multicolumn{1}{c|}{$P$} & $E$ & $a$ & $b_1$ & $b_2$ & References\\ 
  \hline
1. & $1+ (x^2 - x + 1)y + (x^2 + x + 1)z$ & $45a2$ & -1/6 & 1 & 0 & \multirow{2}{*}{\cite{Bru20}} \\ 
 \cline{1-6}
2. & $x^2+1 +(x+1)^2 y + (x^2 -1)z$ & $48a1$ & -1/10 & 0& 1 &\\  
 \hline
\end{tabular}
\centering
\caption{ Conjectural identities $\m(P) \stackrel{?}{=} a \cdot L'(E, -1) + b_1 \cdot L'(\chi_{-3}, -1) + b_2 \cdot L'(\chi_{-4}, -1)$.}
\label{mix1}
\end{table}
The first polynomial in Table \ref{mix1}  does not satisfy conditions
(iv)-(v) of Theorem \ref{lalinthm} of Lalín. Moreover, the Maillot variety $W_P$ is a curve of genus 1 and does not have any rational point, hence violates condition (i) also.  For the second polynomial in Table \ref{mix1}, the Maillot variety $W_P$ is a union of a line and a nonsingular curve of genus 1.  We also give an example to which our theorem does not apply (see Section \ref{ly}(d))
\begin{equation}\label{mix3}
    \m(x^2 + x + 1 + (x^2 +x +1)y + (x-1)^2 z) = -\dfrac{1}{12} L'(E_{72a1}, -1) + \dfrac{3}{2} L'(\chi_{-3}, -1),
\end{equation}
which is conjectured by Brunault \cite{Bru20}.

By a method of Lalín in \cite[Example 4.2]{Lal15}, we prove without assuming  Beilinson's conjecture the following Mahler measure identities involving only Dirichlet $L$-values (see Section \ref{ly}(b)-(c)):
\begin{table}[H]
\centering
\begin{tabular}{|c||p{9cm}|c|c|c|c| } 
 \hline
 & \multicolumn{1}{c|}{$P$} & $b_1$ & $b_2$ & References\\ 
 \hline
 1. & $1 + (x + 1)(x^2 + x + 1)y + (x+1)^3z$ & 3 & 0& \multirow{8}{*}{\cite{Bru20}}\\ 
 \cline{1-4}
 2. & $x^2+1 + (x^2 + x + 1)y + (x+1)^3z$ & \multirow{2}{*}{7/2} & \multirow{2}{*}{0} & \\ 
 \cline{1-2}
 3. & $x^2+1 + (x+1)(x^2 + x + 1)y + (x+1)^3z$& & &\\
 \cline{1-4}
 4. & $x^2+1 + (x + 1)(x^2 + x + 1)y + (x - 1)(x^2 - x + 1) z$ & \multirow{2}{*}{0} & \multirow{2}{*}{7/3} & \\
  \cline{1-2}
5. & $(x + 1)(x^2 + 1) + (x + 1)(x^2 + x + 1)y + (x - 1)(x^2 - x + 1) z$ &  & & \\ 
 \cline{1-4}
6. & $x^2 + 1 + (x+1)^2y + (x-1)^2z$ & 0 & 2& \\ 
\cline{1-4}
7. & $x^2 + 1 + (x+1)^3y + (x-1)^3z$ & \multirow{2}{*}{0} & \multirow{2}{*}{3}&  \\ 
\cline{1-2}
8. & $(x + 1)(x^2 + 1) + (x+1)^3y + (x-1)^3z$ &  & &\\  
\hline
\end{tabular}
\centering
\caption{$\m(P) = b_1 \cdot L'(\chi_{-3}, -1) + b_2 \cdot L'(\chi_{-4}, -1)$.}
\label{table:3}
\end{table}

The article contains five sections. In the first three sections, we recall some tools that needed for our constructions. In Section \ref{DeligneCoho}-\ref{Beireg}, we recall the definitions of the Deligne cohomology and some facts about motivic cohomology. In particular, we give an explicit isomorphism between Chow motives of a genus 1 smooth projective curve and its Jacobian. We then recall briefly the regulator maps and Beilinson's conjecture for smooth projective curves of genus 1. In Section \ref{Goncharov}, we bring back Goncharov's polylogarithmic complexes and his regulator maps on the cohomology of these complexes. In Section \ref{DeJeu}, we recall De Jeu's polylogarithmic complexes and discuss his maps connecting the cohomology of Goncharov's polylogarithmic complexes to the motivic cohomology. In particular, in Section \ref{regmaps}, given a 2-cocycle in weight 3 Goncharov's polylogarithmic complex, we compare its Goncharov regulator and the Beilinson regulator of its image under the map defined in Section \ref{dJmapsection}.  In Section \ref{Deligne}, given an exact polynomial $P$ in $\Qbb[x, y, z]$, we construct an element in the Deligne cohomology of an open subset of the normalization $C$ of the Maillot variety $W_P$ \eqref{maillot}. We then relate this element to the Mahler measure of $P$ in Section \ref{relate1}. In Section \ref{motivic}, we construct explicitly an element in $K_4^{(3)}(C)$  satisfying that its Beilinson regulator has connection with the Deligne cohomology element constructed in Section \ref{Deligne}. We then prove the main theorem in Section \ref{relate}. We end the article with Section \ref{Examples}, where we study the conjectural Mahler measure identities mentioned above.

\bigskip 

\textbf{Acknowledgement.} I would like to express my gratitude to my supervisor, François Brunault, who brought me the idea to work on this subject as well as spent an immense amount of time to discuss with me. I also would like to thank for his generosity in sharing his conjectural Mahler measure identities. I would like to thank Rob de Jeu for his enlightening explanation in the construction of his maps and in computing regulators integrals in Section \ref{DeJeu}. I also would like to thank Nguyen Xuan Bach for several fruitful discussions. 

\section{The Beilinson regulator map }\label{preliminary}

\subsection{Deligne cohomology}\label{DeligneCoho}  Deligne cohomology of a smooth complex algebraic variety $X$ is firstly introduced by Deligne in 1972, it is given by the hypercohomology of 
\begin{equation}\label{49}
    0 \to \Zbb(n) \to \Ocal_X \to \Omega_X^1\to \Omega_X^2 \to \dots \to \Omega_X^{n-1} \to 0,
\end{equation}
where the constant sheaf $\Zbb(n) := (2\pi i)^n \Zbb$ is placed in degree 0 and $\Omega_X^j$ is the sheaf of holomorphic $j$-forms on $X$ placed in degree $j+1$. Burgos \cite{Bur97} then showed that this hypercohomology can be  the cohomology  of a single complex. Let us recall briefly Burgos' construction (see \cite{Bur97}, \cite{BZ20}). 

Let $X$ be a smooth complex algebraic variety of dimension $d$. Let  $(\bar X, \iota)$ be a good compactification of $X$, i.e., that $\bar X$ is a smooth proper variety and $\iota : X \hookrightarrow \bar X$ is an open immersion such that $D := \bar X - \iota(X)$ is locally given by $z_1 \dots z_m = 0$ for some analytic local coordinates $z_1, \dots, z_d$ on $\bar X$ and $m \le d$. 

\begin{definition}[{\cite[Proposition  1.1]{Bur97}}]\label{logaalongD}
A complex smooth differential form $\omega$ on $X$ \textit{has logarithmic singularities along $D$ } if locally $\omega$ belongs to the algebra generated by the smooth forms on $\bar X$ and $\log |z_i|, \dfrac{dz_i}{z_i}$, $\dfrac{d\bar z_i }{\bar z_i}$, for $1 \le i \le m$, where $z_1 \dots z_m = 0$ is a local equation of $D$. For $\Lambda \in \{\Rbb, \Cbb\}$, $E^n_{X, \Lambda}(\log D)$ denotes the space of such $\Lambda$-valued smooth differential forms of degree $n$ on $X$.
\end{definition}
We have $ E^n_{X, \Cbb} (\log D) = \bigoplus_{p+q = n} E^{p, q}_{X, \Cbb}(\log D)$,
where $E^{p, q}$ is the subspace of type $(p, q)$-forms. We denote by $\bar \partial : E^{p, q} \to E^{p, q+1}$ and $\partial : E^{p, q} \to E^{p+1, q}$ as the usual operators and $d = \partial + \bar \partial$. Burgos defined
\begin{equation*}
    E^*_{\log, \Lambda}(X) = \varinjlim_{(\bar X, \iota) \in \mathcal{I}^{opp}} E^*_{X, \Lambda}(\log D),
\end{equation*}
where $\mathcal{I}$ is the category of good compactification of $X$. He then introduced the following complex.
\begin{definition}\label{54} ({\cite[Theorem 2.6]{Bur97}})
For any integers $j, n \ge 0$,
\begin{equation*}
    E_j(X)^n := \begin{cases}
    (2 \pi i)^{j-1} E^{n-1}_{\log, \Rbb}(X) \cap \left(\bigoplus_{p+q = n-1; p, q < j} E^{p, q}_{\log, \Cbb}(X)\right) &\text{if } n \le 2j-1,\\
    (2\pi i)^j E^n_{\log, \Rbb} (X) \cap \left(\bigoplus_{p+q = n; p, q \ge j} E^{p, q}_{\log, \Cbb}(X)\right) &\text{if } n \ge 2j,
    \end{cases}
\end{equation*}
\begin{equation*}
    d^n \omega :=\begin{cases}
    -\text{pr}_j(d \omega) &\text{if } n < 2j-1,\\
    -2 \partial\bar \partial \omega &\text{if } n = 2j -1,\\
    d \omega &\text{if } n \ge 2j,
    \end{cases}
\end{equation*}
where $\text{pr}_j$ is the projection $\bigoplus_{p, q} \to \bigoplus_{p, q< j}$. Denote by $E_j(X)$ the complex $(E_j(X)^n, d^n)_{n\ge 0}$.
\end{definition}

\begin{definition}[\textbf{Deligne Cohomology}]\label{52}(\cite[Corollary 2.7]{Bur97})
Let $X$ be a smooth complex algebraic variety. The Deligne cohomology of $X$ is the cohomology of the complex $E_j(X)$, that is
$$H^n_\Dcal (X, \Rbb(j)) = H^n (E_j(X))\quad \text{for } j, n\ge 0.$$
\end{definition}
As the canonical map
$E^*_{X, \Cbb}(\log D) \to E^*_{\log, \Cbb} (X)$
is a quasi-isomorphism (cf. \cite[Theorem 1.2]{Bur94}), we can use $E^*_{X, \Lambda}(\log D)$ for some good compacitification of $X$ instead of $E_{\log, \Lambda}^* (X)$  in Definition \ref{54}.

\begin{rmk}\label{1.5} For the case $j > \dim X \ge 1$ or $j > n$, $H^n_\Dcal(X, \Rbb(j))$ is canonically isomorphic to the de Rham cohomology $H^{n-1} (X, (2\pi i)^{j-1} \Rbb)$ by the canonical map sending a Deligne cohomology class to its de Rham cohomology class (see \cite[Section 8.1]{BZ20}).
In particular, for $j > 1$, we have $H^1_\Dcal(\Spec (\Cbb), \Rbb(j)) \simeq \Rbb(j-1)$.
\end{rmk}

Let $X$ be a smooth variety over $\Rbb$. Let $X(\Cbb)$ denote the set of complex points of $X\times_\Rbb \Cbb$. Denote by $c$ the complex conjugation on $X(\Cbb)$. For a complex differential form $\omega$ on $X(\Cbb)$, we define an involution $F_\dR (\omega) := c^*(\bar \omega)$. It acts on the complex $E_j(X(\Cbb))$, hence acts on the Deligne cohomology.

\begin{definition}\label{94}({\cite[Remark 6.5]{Bur97}}) Let $X$ be a smooth variety over $\Rbb$. The Deligne cohomology of $X$ is defined by 
$$H^n_\Dcal (X, \Rbb(j)) := H^n_\Dcal(X \times_\Rbb \Cbb, \Rbb(j))^+,$$
where ``+'' denotes the invariant part under the action of the involution $F_{\text{dR}}$.
\end{definition}

Let $X$ be a smooth real or complex variety, there is a cup-product in Deligne cohomology 
\begin{equation}
\cup : H^n_\Dcal(X, \Rbb(j)) \otimes H^{m}_\Dcal(X, \Rbb(k)) \to H^{n+m}_\Dcal (X, \Rbb(j + k)), 
\end{equation}
(see \cite[Theorem 3.3]{Bur97}). It is graded commutative  (i.e., $\alpha \cup \beta = (-1)^{mn} \beta \cup \alpha$), and associative. In the case $n < 2j$, $m< 2k$, for $\alpha \in  H^n_\Dcal(X, \Rbb(j))$ and $\beta \in H^m_\Dcal(X, \Rbb(k))$, we have that $\alpha \cup \beta$ is represented by
\begin{equation}\label{65}
 (-1)^n r_j(\alpha) \wedge \beta + \alpha \wedge r_k (\beta),
\end{equation}
where $r_\ell(\alpha) :=
\partial (\alpha^{\ell -1, n-\ell}) - \bar \partial (\alpha^{n-\ell, \ell-1})$.

\subsection{Chow motives}\label{Chow}
In this section, we discuss the Chow motives of smooth projective curves. For more details, we refer to \cite{MNP13} or \cite{Scho94}. Recall that the Chow groups of a variety $X$ are $\mathrm{CH}^n(X) := Z^n(X)/\sim$, where $Z^n(X)$ is the free abelian group generated by irreducible subvarieties of $X$ of codimension $n$, and $\sim$ is the rational equivalence (see \cite[Section 1.2]{MNP13}).  If $\phi : X \to Y$ is a morphism of varieties, one has the following homomorphisms $$\phi^* : \CH^n (Y) \to \CH^n(X), \qquad \phi_* : \CH^n(X) \to \CH^{n+ \dim Y - \dim X} (Y).$$
Let $k$ be a field and $r \ge 0$. Let $X, Y \in \mathrm{SmProj}(k)$. If $X$ is of pure dimension $d$, the group of \textit{correspondences of degree $r$} is given by
$\Corr^r (X, Y) := \CH^{d+r}(X\times Y) \otimes \Qbb$. If $X = \coprod X_d$ is a decomposition of subschemes with $X_d$ is of pure dimension $d$, then $\Corr^r(X, Y) := \bigoplus \Corr^r(X_d, Y).$
Let $X, Y, Z \in \SmProj(k)$ and $f \in \Corr^r(X, Y)$, $g \in \Corr^s(Y, Z)$, the composition of correspondences
\begin{equation*}
    \Corr^r(X, Y) \times \Corr^s
    (Y, Z) \to \Corr^{r+s}(X, Z)
\end{equation*}
is defined by
\begin{equation*}
    (f, g) \mapsto g \circ f := \pr_{13*}\ (\pr_{12}^*\  f \cdot \pr_{23}^*\ g) = \pr_{13*} \ (f \times Z \cdot X \times g),
\end{equation*}
where $\pr$ is the canonical projection and $\cdot$ is  the intersection product. Let $\phi : X \to Y$ in $\SmProj(k)$, with $X$ and $Y$ are of pure dimensions $d$ and $e$, respectively. Let $\Gamma_\phi$ denote the image of the closed immersion $(\id_X, \phi) : X \to X\times Y$. We have the following correspondences: 
    \begin{equation*}        \phi_* = [\Gamma_\phi] \in \Corr^{e-d}(X, Y), \qquad 
        \phi^* := [^t \Gamma_\phi] \in \Corr^0(Y, X).
    \end{equation*}

\begin{definition}[\textbf{Chow motives}] Objects of the category of Chow motives $\CHM_\Qbb(k)$ are triples $(X, p, m)$, where $X \in \SmProj(k)$, $p \in \Corr^0(X, X)$ is an idempotent, i.e., $p \circ p = p$, and $m$ is an integer. If $(X, p, m)$ and $(Y, q, n)$ are Chow motives, then
$$\Hom_{\CHM_\Qbb(k)}((X, p, m), (Y, q, n)) = p \circ \Corr^{n-m}(X, Y) \circ q \subset \Corr^{n-m}(X, Y).$$
There is a contravariant functor
\begin{equation}
    h : \SmProj(k) \to \CHM_\Qbb(k), \quad X \mapsto h(
    X) := (X, [\Delta_X], 0),
\end{equation}
where $\Delta_X$ is the graph of the diagonal map. If $\phi : X \to Y$ is a morphism, 
$h(\phi) := \phi^* = [^t\Gamma_\phi] \in \Corr^0(Y, X) = \Hom_{\CHM_\Qbb(k)} (h(Y), h(X)).$
One calls $h(X)$ the Chow motive of $X$.
\end{definition}

Let $C$ be a smooth projective curve over $k$ (not necessarily having a $k$-rational point). Pick any zero cycle $Z$ on $C$ of positive degree $N$, one defines projectors $p_0(C) := \frac{1}{N}[Z \times C]$, $p_2(C) := \frac{1}{N}[C\times Z]$, and Chow motives $h^i(C) := (C, p_i(C), 0) \in \CHM_\Qbb(k)$ for $i = 0,2$. These motives do not depend on the choice of $Z$, in fact, $h^0(C) \simeq h(\Spec k')$ and $h^{2}(C) \simeq h(\Spec k') \otimes \mathbb{L},$ where $k' = \Gamma(C, \Ocal_C)$ and $\mathbb{L} = (\Spec k, \id, -1)$ is the Lefschetz motive. One sets $p_1(C) := \Delta_C -p_0(C)-p_2(C)$ and $h^1(C) := (C, p_1(C), 0) \in \CHM_\Qbb(k)$. We have the following direct sum decomposition
\begin{equation*}
    h(C) = h^0(C) \oplus h^1(C) \oplus h^2(C), 
\end{equation*}
which depends on the choice of $Z$, but $h^1(C)$ is well-defined up to unique isomorphism  (see \cite[Section 2.3]{MNP13} or \cite[Section 3]{Scho94}). If $C$ is further of genus 1, one can show that $h^1(C) \simeq h^1(E)$, where $E$ is the Jacobian of $C$, by using the following equivalence of categories (see the proof of \cite[Theorem 2.7.2(b)]{MNP13})
\begin{equation*}
    M''_\Qbb \xrightarrow{\simeq} \{\text{category of Jacobian of curves}\} \otimes \Qbb,
\end{equation*}
where $M''_\Qbb$ is the full subcategory of $\CHM_\Qbb(k)$ of motives isomorphic to $h^1(Y)$ for some smooth projective curve $Y$. Let us give explicitly the isomorphism.

\begin{prop}\label{4.17}
    Let $C$ be a smooth projective curve of genus 1 over a number field $k$ and $E$ be its Jacobian, then $h(C) \simeq h(E)$ and $h^1(C) \simeq h^1(E)$.
\end{prop}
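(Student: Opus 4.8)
The plan is to deduce both statements from a single explicit isomorphism $h^1(C)\simeq h^1(E)$, realised by a pair of mutually inverse correspondences. Since $C$ and $E$ are smooth projective geometrically connected curves over $k$, each carries a zero-cycle of positive degree, so (working throughout with $\Qbb$-coefficients, as is built into $\CHM_k$) the projectors of Section~\ref{Chow} give canonical isomorphisms $h^0(C)\simeq h(\Spec k)\simeq h^0(E)$ and $h^2(C)\simeq(\Spec k,\id,-1)\simeq h^2(E)$ (see \cite[Section 2.3]{MNP}). In view of the decomposition $h(C)=h^0(C)\oplus h^1(C)\oplus h^2(C)$ and its analogue for $E$, it therefore suffices to produce mutually inverse maps between $h^1(C)$ and $h^1(E)$ in $\CHM_k$; this yields $h^1(C)\simeq h^1(E)$ and, restoring the weight-$0$ and weight-$2$ summands, $h(C)\simeq h(E)$.

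To construct the correspondences I would fix a closed point $Z$ on $C$ of degree $N>0$ and use the $k$-morphism $f=f_Z\colon C\to E$ sending a point $P$ to the class of the degree-zero divisor $NP-Z$ in $\mathrm{Pic}^0(C)=E$; concretely $f$ is the composite of the canonical isomorphism $C\xrightarrow{\ \sim\ }\mathrm{Pic}^1_{C/k}$, $P\mapsto[\Ocal_C(P)]$ (an isomorphism because $C$ has genus $1$), with $\mathrm{Pic}^1_{C/k}\to\mathrm{Pic}^0_{C/k}$, $x\mapsto Nx-[Z]$. After base change to $\bar k$ and the choice of a point of $C(\bar k)$, $f$ becomes an Abel--Jacobi isomorphism $C_{\bar k}\xrightarrow{\ \sim\ }E_{\bar k}$ followed by multiplication by $N$ on $E_{\bar k}$ and a translation; in particular $f$ is finite flat of degree $N^2$. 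I would then put
\begin{align*}
\phi&:=p_1(C)\circ f^{*}\circ p_1(E)\in\Hom_{\CHM_k}\!\big(h^1(E),\,h^1(C)\big),\\
\psi&:=\tfrac{1}{N^{2}}\,p_1(E)\circ f_{*}\circ p_1(C)\in\Hom_{\CHM_k}\!\big(h^1(C),\,h^1(E)\big),
\end{align*}
and claim $\phi$ and $\psi$ are inverse to one another.

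For $\psi\circ\phi=\id_{h^1(E)}$ I would expand $p_1(C)=\Delta_C-p_0(C)-p_2(C)$ inside $\psi\circ\phi=\tfrac{1}{N^2}\,p_1(E)\circ f_{*}\circ p_1(C)\circ f^{*}\circ p_1(E)$. The $\Delta_C$-term equals $\tfrac{1}{N^2}\,p_1(E)\circ(f_{*}\circ f^{*})\circ p_1(E)=p_1(E)$, since $f$ finite flat of degree $N^2$ gives $f_{*}\circ f^{*}=N^2\,\id_{h(E)}$ by the projection formula. The $p_0(C)$-term factors through $h^0(C)$ and the $p_2(C)$-term through $h^2(C)$, so both vanish once one knows $\Hom_{\CHM_k}(h^0(C),h^1(E))=0$ and $\Hom_{\CHM_k}(h^1(E),h^2(C))=0$; these two vanishings are elementary — they express that the weight-one motive $h^1(E)$ contains no copy of the unit motive and admits no quotient isomorphic to the Lefschetz motive, and one checks them by a direct computation with $p_1(E)=\Delta_E-p_0(E)-p_2(E)$. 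Hence $\psi\circ\phi=p_1(E)=\id_{h^1(E)}$.

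The reverse identity $\phi\circ\psi=\id_{h^1(C)}$ is equivalent to the vanishing of the idempotent $q:=p_1(C)-\phi\circ\psi$, which lies in $\Corr^0(C,C)=\CH^1(C\times C)_\Qbb=\mathrm{Pic}(C\times C)_\Qbb$, and this is where I expect the main difficulty. Since $C\times C$ is proper and geometrically connected, $\mathrm{Pic}(C\times C)\hookrightarrow\mathrm{Pic}\big((C\times C)_{\bar k}\big)$ by Hilbert~90, still injective after $\otimes\Qbb$, so it is enough to see $q_{\bar k}=0$. Over $\bar k$ the curve $C_{\bar k}$ has a rational point, so an Abel--Jacobi map gives an isomorphism $g\colon C_{\bar k}\xrightarrow{\ \sim\ }E_{\bar k}$, hence $g^{*}\colon h^1(E_{\bar k})\xrightarrow{\ \sim\ }h^1(C_{\bar k})$; then $\phi_{\bar k}\circ(g^{*})^{-1}$ is an element of the finite-dimensional $\Qbb$-algebra $\mathrm{End}_{\CHM_{\bar k}}\big(h^1(C_{\bar k})\big)\cong\mathrm{End}(E_{\bar k})\otimes\Qbb$ (see \cite{MNP}) that admits the left inverse $g^{*}\circ\psi_{\bar k}$ by the previous paragraph, hence is invertible; thus $\phi_{\bar k}$ is an isomorphism with inverse $\psi_{\bar k}$, so $q_{\bar k}=0$ and therefore $q=0$. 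Consequently $\phi$ and $\psi$ are mutually inverse, giving $h^1(C)\simeq h^1(E)$ and hence $h(C)\simeq h(E)$. The one genuinely delicate point is precisely this descent: $\phi$ and $\psi$ are visibly inverse only after passing to $\bar k$, where $C$ acquires a rational point and $f$ unwinds into multiplication-by-$N$ up to a translation; over $k$ one is rescued by the injectivity of $\CH^1$ of the self-product into its geometric analogue and by the identification of $\mathrm{End}_{\CHM}(h^1)$ of an elliptic curve with $\mathrm{End}(E)\otimes\Qbb$, which I would simply cite, while everything else ($h^0$/$h^2$ reduction, the projection formula $f_{*}f^{*}=\deg f$, and the two $\Hom$-vanishings) is routine.
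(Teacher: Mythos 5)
Your proof is correct, and its skeleton is the one the paper uses: the degree-$N^2$ map $f\colon C\to E=\mathrm{Pic}^0_{C/k}$, $P\mapsto NP-Z$, with the correspondences $f_*$ and $f^*$ cut down by the projectors $p_1$. The two arguments diverge at the crucial identity, the one asserting that $f^*$ is (up to the factor $N^2$) a two-sided inverse of $f_*$. The paper proves $f^*\circ f_*=N^2[\Delta_{C_{\bar k}}]$ by a direct intersection-theoretic computation: the relevant intersection is the union of the graphs of the translations $x\mapsto x+p$ for $p\in E_{\bar k}[N]$, each of which is rationally equivalent (with $\Qbb$-coefficients) to the diagonal; it then descends the isomorphism $h(C_{\bar k})\simeq h(E_{\bar k})$ to $k$ by checking Galois-invariance of $\Gamma_f$, and only afterwards extracts the $h^1$ statement by tracking projectors. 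You instead prove only the easy composite $\psi\circ\phi=\id_{h^1(E)}$ (projection formula plus two $\Hom$-vanishings) and recover the reverse composite softly: the defect $q$ lies in $\CH^1(C\times C)_\Qbb$, which injects into its geometric analogue, and over $\bar k$ a one-sided inverse in the finite-dimensional algebra $\mathrm{End}_{\CHM_{\bar k}}(h^1(C_{\bar k}))\simeq\mathrm{End}(E_{\bar k})\otimes\Qbb$ is automatically two-sided. This avoids both the graph-intersection computation and the rational-equivalence claim for translation graphs, and your definition of $f$ over $k$ via the Picard functor sidesteps the paper's Galois-descent step for the correspondence itself; the price is the citation of $\mathrm{End}_{\CHM}(h^1)\simeq\mathrm{End}(E)\otimes\Qbb$ (essentially \cite[Theorem 2.7.2]{MNP}, which the paper also invokes but deliberately avoids relying on, since its stated purpose is to make the isomorphisms explicit). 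The paper's computation is more self-contained and yields $h(C)\simeq h(E)$ directly, whereas you reassemble it from $h^0$, $h^1$, $h^2$; both routes are valid.
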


\begin{proof}
    Let $\bar k$ be the algebraic closure of $k$ and let us fix a point $x_0 \in C(\bar k)$. We consider the morphism $\phi : C_{\bar k} \to E_{\bar k},$ which maps $x \in C(\bar k)$ to the divisor $N (x) - \sum_{\sigma} \left(\sigma(x_0)\right)$, where $\sigma$ runs through all the embeddings $k(x_0) \hookrightarrow \bar k$ and $N$ is the number of these embeddings. This map is well-defined as $N(x) - \sum_\sigma (\sigma(x_0))$ is a divisor of degree 0.  We have 
$\phi_* \in  \Hom_{\CHM_\Qbb(\bar k)}(h(C_{\bar k}), h(E_{\bar k}))$ and 
$\phi^* \in \Hom_{\CHM_\Qbb(\bar k)}(h(E_{\bar k}), h(C_{\bar k})).$ 
By \cite[Section  2.3]{MNP13}, we have $\phi_* \circ \phi^* = \deg(\phi)[\Delta_{E_{\bar k}}] = N^2 [\Delta_{E_{\bar k}}].$
Conversely, we have 
\begin{equation*}
    \phi^* \circ \phi_* \stackrel{\mathrm{def}}{=} \mathrm{pr}_{13*} ((\Gamma_\phi \times C_{\bar k}) \cdot (C_{\bar k} \times {^t}\Gamma_\phi)).
\end{equation*}
As sets, we observe that
\begin{equation*}
\begin{aligned}
    (\Gamma_\phi \times C_{\bar k}) \cap (C_{\bar k} \times {^t}\Gamma_\phi) &= \{(x, \phi(x), y) | x, y\in C(\bar k)\} \cap \{(z, \phi(t), t)| z, t\in C(\bar k)\}\\
    &= \{(x, \phi(x), y)|x, y\in C(\bar k), \phi(x) = \phi(y)\}\\
    &= \{(x, \phi(x), y)|x, y\in C(\bar k), N(x) - N(y) = 0 \text{ in } E(\bar k)\}\\
    &= \{(x, \phi(x), x+p)|x \in C(\bar k), p \in E_{\bar k}[N]\},
\end{aligned}
\end{equation*}
where $E_{\bar k}[N]$ is the set of $N$-torsion points of $E(\bar k)$ and ``+'' is the canonical action of $E_{\bar k}$ on $C_{\bar k}$.  So 
\begin{equation*}
   \phi^*\circ \phi_* = \sum_{p \in E_{\bar k}[N]} [\Gamma_{\varphi_p}] =  N^2 [\Delta_{C_{\bar k}}],
\end{equation*}
where $\varphi_p : C_{\bar k} \to C_{\bar k}, \ x \mapsto x + p$, and the last equality follows from the fact that $\Gamma_{\varphi_p}$ is rationally equivalent to $\Delta_{C_{\bar k}}$ for $p \in E_{\bar k}[N]$.   
We thus obtain that $\phi_* : h(C_{\bar k}) \to h(E_{\bar k})$ is an isomorphism in the category $\CHM_\Qbb(\bar k)$. For $\alpha \in \Gal(\bar k/k)$ and $x \in C(\bar k)$,
\begin{equation*}
    \begin{aligned}
        (\alpha\circ \phi) (x) =  \alpha(N(x)) - \sum_{\sigma} (\alpha\circ\sigma(x_0)) = N(\alpha(x)) - \sum_\sigma (\sigma(x_0)) = (\phi \circ \alpha)(x).
    \end{aligned}
\end{equation*}
This implies that $\Gamma_\phi$ and $^t\Gamma_\phi$ are $\Gal(\bar k/k)$-invariant. Hence by Galois descent (see e.g. \cite[Theorem 1.3(6)]{DS91}) 
\begin{equation*}
    \CH^1(C_{\bar k} \times_{\bar k} J_{\bar k})^{\Gal(\bar k/k)} \simeq \CH^1(C \times_k E), \ \CH^1(E_{\bar k} \times_{\bar k} C_{\bar k})^{\Gal(\bar k/k)} \simeq \CH^1(E \times_k C),
\end{equation*}
$\phi_*$ defines an isomorphism from $h(C)$ to $h(E)$ in the category $\CHM_k$ with inverse $\phi^*$.

Denote by $A$ the positive zero-cycle of degree $N$ corresponding to $x_0$. We set $p_0(C) := \frac{1}{N}[A\times C]$, $p_2 (C) := \frac{1}{N}[C \times A]$, and $p_1(C) := \Delta_C - p_0(C) - p_2(C)$. 
Let $\Ocal$ be the trivial element in $E(k)$, we set $p_0(E) := \Ocal \times E$, $p_2 (E) = E \times \Ocal$, and $p_1(E) := \Delta_E - p_0(E) - p_2(E)$. By explicit computations, we have
\begin{equation*}
    \phi^*\circ p_0(E) \circ \phi_* = N^2 p_0(C) \quad \text{and} \quad \phi^* \circ p_2(E) \circ \phi_* = N^2 p_2(C).
\end{equation*}
Now we show that $p_1(E)\circ \phi_* \circ p_1(C)$ and $p_1(C) \circ \phi^* \circ p_1(E)$ define isomorphisms from $h^1(C)$ to $h^1(E)$ and inverse, respectively. We have
\begin{align*}
    \phi^* \circ p_1(E) \circ \phi_*
    = \phi^* \circ (\phi_* - p_0(E)\circ \phi_* - p_2(E) \circ \phi_*)
    &= N^2 [\Delta_C]  - \phi^*\circ p_0(E) \circ \phi_* - \phi^* \circ p_2(E) \circ \phi_*\\
    &= N^2[\Delta_C] - N^2 p_0(C) - N^2 p_2(C)\\
    &= N^2 p_1(C).
\end{align*}  
We thus have $ p_1(C) \circ \phi^* \circ p_1(E) \circ p_1(E)\circ \phi_* \circ p_1(C) = p_1(C) \circ \phi^* \circ p_1(E) \circ \phi_* \circ p_1(C) = N^2 p_1(C).$ Similarly, we have $ p_1(E)\circ \phi_* \circ p_1(C) \circ p_1(C) \circ \phi^* \circ p_1(E) = N^2 p_1(E).$
\end{proof}

\subsection{Motivic cohomology}\label{motiviccohomology} Let $k$ be an arbitrary field of characteristic 0. Let us recall briefly the definition and some facts of motivic cohomology. For more details, we refer to \cite[Chapter 5, Section 2]{VSF00}. Voevodsky constructed a triangulated category, called \textit{geometrical motives}, denoted by $\mathrm{DM}_\mathrm{gm}(k)$ and a covariant functor $$M_{\mathrm{gm}} : \mathrm{Sm}(k) \to \mathrm{DM}_\mathrm{gm}(k)$$
(see \cite[page 192]{VSF00}). The motivic cohomology of $X$ with coefficients in $\Qbb$ is defined by 
\begin{equation}
    H^n_{\Mcal}(X, \Qbb(j)) := \Hom_{\mathrm{Dm}_{\mathrm{gm}}(k)} (M_{\mathrm{gm}}(X), \Qbb(j)[n]), \quad  \text{ for } n, j \in \Zbb,
\end{equation}
where $\Qbb(1) \in \mathrm{DM}_\mathrm{gm}(k)$ is the \textit{Tate motive} (see also \cite[page 192]{VSF00}) and $\Qbb(j) = \Qbb(1)^{\otimes j}$. It is known that the motivic cohomology $H^n_\Mcal(X, \Qbb(j))$ is isomorphic to the $j$-eigenspace of Quillen's $K$-group $K_{2j-n}(X)_\Qbb$ with respect to Adams operation (see \cite[Chapter II.4]{Wei13} for the definition), namely, 
\begin{equation}
    H^n_\Mcal(X, \Qbb(j)) \simeq K^{(j)}_{2j-n}(X)
\end{equation}
(see \cite[page 197]{VSF00}). By functorial property of $M_\mathrm{gm}$, for any morphism $f: X \to Y$, we have a $\Qbb$-linear map $f^* : H^n_\Mcal(Y, \Qbb(j)) \to H^n_\Mcal(X, \Qbb(j))$, called pull back of $f$. Moreover, for proper maps $f : X \to Y $ of pure codimension $c = \dim Y - \dim X$, we have a $\Qbb$-linear map, called push-forward of $f$ (see \cite[Theorem 1.3]{DS91})
$$f_* : H^n_\Mcal(X, \Qbb(j)) \to H^{n+2c}_\Mcal(Y, \Qbb(j + c)).$$

Let $X$ and $X'$ be smooth quasi-projective varieties over $k$ and $\pi : X' \to X$ be a finite Galois covering with group $G$. We have Galois descent for motivic cohomology, i.e., 
\begin{equation}\label{Galoisdescent}
    \pi^* : H^n_\Mcal(X, \Qbb(j)) \xrightarrow{\simeq
    } H^n_\Mcal(X', \Qbb(j))^G
\end{equation}
is an isomorphism (see \cite[Theorem 1.3]{DS91}).

Let $X \in \mathrm{Sm}(k)$ and $\iota : Z \hookrightarrow X$ be a closed immersion of smooth varieties of codimension $c$ with open complement $\jmath : X - Z \hookrightarrow X$. We have a \textit{localization sequence} for motivic cohomology (see \cite[page 196]{VSF00} or \cite[Theorem 1.3]{DS91})
\begin{equation}\label{loclization1}
    \cdots \to H^{i-2c}_\Mcal(Z, \Qbb(j-c)) \xrightarrow{\iota_*} H^i_\Mcal(X,  \Qbb(j)) \xrightarrow{\jmath^*} H^i(X - Z, \Qbb(j))
     \to H^{i+1-2c}_\Mcal(Z, \Qbb(j-c)) \to \cdots.
\end{equation}

Let $C$ be a smooth curve over a number field $k$. Denote by $F = k(C)$ the function field of $C$. Then $$H^n_\Mcal(F, \Qbb(j)) = \varinjlim_{U \subset C \text{ open}} H^n_\Mcal(U, \Qbb(j)).$$ We have the following version of localization sequence
\begin{equation}\label{motiviclocalization}
    0 \to H^2_\Mcal(C, \Qbb(3)) \to H^2_\Mcal(F, \Qbb(3)) \xrightarrow{\Res^\Mcal} \bigoplus_{x\in C^1} H^1_\Mcal(k(x), \Qbb(2)),
\end{equation}
where $C^1$ is the set of closed points of $C$. This follows from the localization sequence of Quillen's $K$-groups (see \cite[V.6.12]{Wei13}). The left exactness follows from Borel's theorem (see e.g. \cite[IV.1.18]{Wei13}) which states that $K_4$ of a number field is torsion, hence $H^0_\Mcal(K, \Qbb(2)) \simeq K_4^{(2)}(K) = 0$.

\subsection{The Beilinson regulator map}\label{Beireg}
Let $X$ be a smooth variety over $\Rbb$ or $\Cbb$. The Beilinson regulator map, as defined in \cite{Nek13}, is a $\Qbb$-linear map
\begin{equation}
    \reg_X : H^n_\Mcal (X, \Qbb(j)) \to H^n_\Dcal(X, \Rbb(j)).
\end{equation}
For $n = j = 1$, we have $H^1_\Mcal(X, \Qbb(1)) = \Ocal(X)^\times_\Qbb$ and the regulator map sends an invertible function $f$ to the class of $\log |f|$ (cf. \cite[Appendix A.3]{BZ20}). As the regulator map is compatible with taking cup products, we observe that the regulator map sends the Milnor symbol $\{f_1, \dots, f_n\} \in H^n_\Mcal(X, \Qbb(n))$ to the class of $\log |f_1| \cup \dots \cup \log |f_n|$ in $H^n_\Dcal(X, \Rbb(n))$. When $X$ is defined over a number field $k$, we write $X_\Rbb := X \times_\Qbb \Rbb$, and the Beilinson regulator map  is defined as the composition 
\begin{equation}\label{135}
     H^n_\Mcal(X, \Qbb(j)) \xrightarrow{\text{base change}} H^n_\Mcal(X_\Rbb, \Qbb(j)) \xrightarrow{\reg_{X_\Rbb}} H^n_\Dcal (X_\Rbb, \Rbb(j)).
\end{equation}

Now let $C$ be a smooth curve over a number field $k$. Let $F = k(C)$ be the function field of $C$.  We define Deligne cohomology of $F$ by the direct limit 
\begin{equation}
    H^n_\Dcal(F, \Rbb(j)) := \varinjlim_{U \subset C\text{ open}} H^n_\Dcal (U_\Rbb, \Rbb(j)). 
\end{equation}
And the regulator map for the function field is defined by $\reg_F := \varinjlim_{U\subset C\text{ open}} \reg_U$
\begin{equation}\label{regF}
    \reg_F : H^n_\Mcal(F, \Qbb(j)) \to H^n_\Dcal(F, \Qbb(j)).
\end{equation}

Now let us recall the \textit{regulator integral} for smooth projective curve (see \cite[Section 3]{dJ00} for more details). Let $C$ be a smooth projective curve over a number field $k$. Denote by $C(\Cbb)$ the set of complex points of $C \times_\Qbb \Cbb$. If $\omega$ is a holomorphic 1-form on $C(\Cbb)$ such that $F_\dR(\omega) = \omega$, where $F_\dR$ is defined in Section \ref{DeligneCoho}, then we have a map
\begin{equation}
    H^1(C(\Cbb), \Rbb(2))^+ \to \Rbb(1), \qquad \eta \mapsto \int_{C(\Cbb)} \eta \wedge \bar \omega
\end{equation}
(see \cite[Remark 3.1]{dJ00}). This integral depends on the choice of the orientation of $C(\Cbb)$. Recall that there is a canonical isomorphism $H^2_\Dcal(C_\Rbb, \Rbb(3)) \simeq H^1(C(\Cbb), \Rbb(2))^+$ (see Remark \ref{1.5}). We thus write the Beilinson regulator  map on $C$ as the composition 
\begin{equation}\label{regC}
   \reg_C : H^2_{\Mcal}(C, \Qbb(3)) \xrightarrow{} H^2_\Dcal (C_\Rbb, \Rbb(3)) \simeq H^1(C(\Cbb), \Rbb(2))^+.
\end{equation}
The composition map 
\begin{equation}\label{regint}
    H^2_{\Mcal}(C, \Qbb(3)) \xrightarrow{\reg_C} H^1(C(\Cbb), \Rbb(2))^+ \xrightarrow{\eta \mapsto \int_{C(\Cbb)} \eta \wedge \bar \omega} \Rbb(1)
\end{equation}
is called the \textit{regulator integral}. Similarly, we have the regulator integral for the function field
\begin{equation}\label{regintF}
     H^2_{\Mcal}(F, \Qbb(3)) \xrightarrow{\reg_F} H^1(F, \Rbb(2))^+ \xrightarrow{\eta \mapsto \int_{C(\Cbb)} \eta  \wedge \bar \omega} \Rbb(1),
\end{equation}
where $H^1(F, \Rbb(2))^+ := \varinjlim_{U\subset C \text{ open}} H^1(U(\Cbb), \Rbb(2))^+$.

\subsection{Beilinson's conjecture for genus 1 curves}\label{Beilinson} In this section, we recall Beilinson's conjecture for smooth projective curves of genus 1 (see \cite[Section\ 6]{Nek13}, \cite[Section\ 4]{dJ96} for more details). 
Let us recall the definition of $L$-function attached to the pure motive $h^i(X)$, for $X$ is a smooth projective variety over $\Qbb$ of dimension $n$.

\begin{definition}[{\cite[Section 1.4]{Nek13}}]
Let $p$ be a prime number. For $0\le i \le 2n$, we set
\begin{equation*}
    L_p(h^i(X), s) = \det (1 - \mathrm{Frob}_p p^{-s} | h^i_\ell(X)^{I_p})^{-1},
\end{equation*}
where $\ell \neq p$ is a prime number,  $\mathrm{Frob}_p\in \Gal(\bar \Qbb/\Qbb)$ is a Frobenius element at $p$, acting on the étale realization 
\begin{equation*}
   h^i_\ell(X) :=  H^i_{\et}(X_{\bar \Qbb}, \Qbb_\ell),
\end{equation*}
and $I_p$ is the inertia group at $p$.
\end{definition}

\begin{rmk}
If $X$ has good reduction at $p$, then $L_p(h^i(X), s)$ does not depend on the choice of $\ell$ (\cite[Section 1.4]{Nek13}). And it is conjectured by Serre that if $X$ has bad reduction at $p$, then $L_p(h^i(X),s)$ is independent of the choice of $\ell$ and has integer coefficients (cf. \cite[Conjecture 5.45]{Kah20}). This conjecture holds if $i \in \{0, 1, 2n-1, 2n\}$ (cf. 
    \cite[Theorem 5.46]{Kah20}). In particular, it holds when $X$ is a curve.
\end{rmk}

\begin{definition}[\textbf{$L$-function}](\cite[Section 1.5]{Nek13}) The $L$-function associated to $h^i(X)$ is defined by
\begin{equation*}
    L(h^i(X), s) = \prod_{p \ \mathrm{prime}} L_p(h^i(X), s).
\end{equation*}
\end{definition}

Let $C/\Qbb$ be a smooth projective curve of genus 1 and $E$ be its Jacobian. By Proposition \ref{4.17}, we have $L(h^1(C), s) = L(h^1(E), s)$. Hence $L(h^1(C), s) = L(E, s)$ the Hasse-Weil $L$-function. We then have the following version of Beilinson's conjecture for smooth projective curve of genus 1.
\begin{conj}[{\cite[Section 6]{Nek13}, \cite[Section 4]{dJ96}}]\label{BeiConj}
Let $C$ be a smooth projective curve over $\Qbb$ of genus 1 and $E$ be its Jacobian. For any nontrivial element $\alpha \in H^2_\Mcal (C, \Qbb (3))$, we have
\begin{equation*}
    \dfrac{1}{(2 \pi i)^2} \left<\gamma_C^+, \reg_C(\alpha)\right>_{C(\Cbb)} = a \cdot L'(E, -1), \qquad (a \in \Qbb^\times),
\end{equation*}
where $\gamma_C^+$ is a generator of $H_1(C(\Cbb), \Qbb)^+$, $\reg_C$ is the Beilinson regulator map \eqref{regC}, and $\left< \cdot, \cdot \right>$ is then the pairing in de Rham cohomology.
\end{conj}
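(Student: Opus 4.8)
The plan is to obtain this statement as the specialization of the general Beilinson conjecture, in the formulation of \cite{Nek} and \cite{dJ96}, to the motive $h^1(C)$ in weight $3$; I would treat the general conjecture (its existence, rank, and rationality assertions for the regulator formalism) as the input and unwind it into the displayed numerical identity. The first step is to pass from $C$ to its Jacobian: by Proposition \ref{4.17} we have $h^1(C)\simeq h^1(E)$ in $\CHM_\Qbb$, so $L(h^1(C),s)=L(h^1(E),s)=L(E,s)$, and the isomorphism identifies the motivic cohomology groups $H^2_\Mcal(C,\Qbb(3))=K_4^{(3)}(C)$ and $H^2_\Mcal(E,\Qbb(3))$ together with their Deligne realizations, compatibly with $\reg_C$. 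Thus it suffices to analyze the Beilinson pairing attached to $H^2_\Mcal(C,\Qbb(3))$, which in the given normalization governs $L(E,s)$ at $s=3$; under the functional equation $s\mapsto 2-s$ this convergent value is carried to $s=-1$, where the archimedean $\Gamma$-factor forces a trivial simple zero of $L(E,s)$, so that the relevant leading coefficient is $L^*(E,-1)=L'(E,-1)$. Fixing this normalization is the first task.

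Next I would pin down the ranks on both sides so that the abstract regulator determinant collapses to a single integral. On the Deligne side, Remark \ref{1.5} together with \eqref{regC} gives $H^2_\Dcal(C_\Rbb,\Rbb(3))\simeq H^1(C(\Cbb),\Rbb(2))^+$, whose real dimension equals the genus $g=1$; dually, the $+$-part $H_1(C(\Cbb),\Qbb)^+$ has rank $1$ and is spanned by $\gamma_C^+$. The general conjecture predicts that $H^2_\Mcal(C,\Qbb(3))$ is finite-dimensional of the matching dimension $1$, equal to $\ord_{s=-1}L(E,s)=1$, and that $\reg_C\otimes\Rbb$ is an isomorphism onto $H^2_\Dcal(C_\Rbb,\Rbb(3))$. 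Granting this, any nontrivial $\alpha$ spans $H^2_\Mcal(C,\Qbb(3))$ after $\otimes\Qbb$, and $\reg_C(\alpha)$ spans the one-dimensional target.

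With both sides one-dimensional, the determinant appearing in Beilinson's conjecture is represented by a single real number, and I would identify it with $\tfrac{1}{(2\pi i)^2}\int_{\gamma_C^+}\reg_C(\alpha)$. The twist factor $(2\pi i)^{j-1}=(2\pi i)^2$ (with $j=3$) divides out the $\Rbb(2)$-normalization of $H^2_\Dcal(C_\Rbb,\Rbb(3))$ coming from Remark \ref{1.5}, while $\gamma_C^+$ supplies the $\Qbb$-rational generator of Betti homology paired against the de Rham/Deligne class. The conjecture then asserts that this number equals $L'(E,-1)$ up to the relevant period and a factor in $\Qbb^\times$; choosing a $\Qbb$-basis of $H^2_\Mcal(C,\Qbb(3))$ absorbs the period into the constant and yields $\tfrac{1}{(2\pi i)^2}\int_{\gamma_C^+}\reg_C(\alpha)=a\cdot L'(E,-1)$ with $a\in\Qbb^\times$. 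Since the group has rank $1$, replacing $\alpha$ by another nontrivial element multiplies it by a nonzero rational, rescaling both sides equally, so the shape of the identity does not depend on the choice of $\alpha$.

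The main obstacle is precisely the deep content of Beilinson's conjecture that this argument consumes rather than establishes: first, that $H^2_\Mcal(C,\Qbb(3))$ is finite-dimensional of the predicted rank $1$ and is generated by classes of the correct integral type; and second, that the resulting regulator determinant is nonzero with rational ratio to $L'(E,-1)$, that is $a\in\Qbb^\times$ rather than merely $a\in\Qbb$ or $a=0$, equivalently that $\reg_C$ does not vanish on the rational part. These are exactly the open points, known unconditionally only in special situations (for instance through Beilinson's and Deninger--Scholl's results on modular curves, or Deninger's explicit Mahler-measure examples). Accordingly I would present the displayed formula as a faithful unwinding of the general conjecture for $h^1(C)\simeq h^1(E)$, recording that its unconditional validity for a given $C$ reduces to surjectivity of the regulator together with rationality of the associated period quotient.
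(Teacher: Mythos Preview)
This statement is a \emph{Conjecture} in the paper, not a theorem: the paper does not supply a proof, it merely records the relevant specialization of Beilinson's conjecture (citing \cite[Section 6]{Nek} and \cite[Section 4]{dJ96}) and then \emph{assumes} it throughout. Your proposal is therefore not being compared against any argument in the paper, and you yourself correctly diagnose the situation when you write that the argument ``consumes rather than establishes'' the conjecture.

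That said, your write-up is a faithful account of \emph{why} Conjecture \ref{BeiConj} is the shape the general Beilinson conjecture takes for $h^1(C)$ with $C$ of genus $1$: the identification $h^1(C)\simeq h^1(E)$ via Proposition \ref{4.17}, the rank-$1$ computation on both the motivic and Deligne sides, the collapse of the regulator determinant to a single period integral against $\gamma_C^+$, and the functional-equation passage from $s=3$ to $s=-1$. This is exactly the context the paper invokes by citation but does not spell out. Just be clear in presentation that what you have written is an \emph{explanation of the formulation}, not a proof; the finite-dimensionality of $H^2_\Mcal(C,\Qbb(3))$, the nonvanishing of $\reg_C$ on nonzero classes, and the rationality of the ratio to $L'(E,-1)$ remain genuinely open in general.
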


\section{Goncharov's polylogarithmic complexes}\label{Goncharov}
In 1990s, Goncharov introduced polylogarithmic complexes and regulator maps from the cohomology of these complexes to Deligne cohomology. They have connections with motivic cohomology and the Beilinson regulator map (cf. \cite{Gon95, Gon96, Gon98}). In this section, we recall briefly these constructions of Goncharov, which will be used in Section \ref{motivic} to construct elements in motivic cohomology.

\subsection{Goncharov's complexes}\label{Goncomplex}

For any field $F$ of characteristic 0 and an integer  $n \ge 1$, Goncharov defined $\Bscr_n(F)$ to be the quotient of the $\Qbb$-vector space $\Qbb[\Pbb_F^1]$ by a certain (inductively defined) subspace $\Rscr_n(F)$. For $x\in F \cup \{\infty\}$, we denote by $\{x\}_n$ the class of $\{x\} \in \Qbb[\Pbb^1_F]$ in $\Bscr_n(F)$. 
Goncharov then constructed a \textit{weight $n$ polylogarithmic complex}, in degree 1 to $n$ (see \cite[Section 1]{Gon95})
\begin{equation*}
\Gamma_F(n) : \Bscr_n(F) \to \Bscr_{n-1}(F) \otimes F^\times_\Qbb \to \Bscr_{n-2}(F) \otimes \bigwedge^2 F^\times_\Qbb \to \cdots \to \Bscr_2(F) \otimes \bigwedge^{n-2} F^\times_\Qbb \to \bigwedge^n F^\times_\Qbb,
\end{equation*}
where the maps are given by 
\begin{equation*}
    \{x\}_{n-p} \otimes y_1 \wedge \dots \wedge y_{p} \mapsto \{x\}_{n-p-1} \otimes x \wedge y_1 \wedge \dots \wedge y_{p} \qquad \text{if } 0 \le p < n-2,
\end{equation*}
and 
\begin{equation*}
    \{x\}_2 \otimes y_1 \wedge \dots \wedge y_{n-2} \mapsto (1-x)\wedge x \wedge y_1 \wedge \dots \wedge y_{n-2}.
\end{equation*}
It is conjectured that the cohomology of this complex computes the motivic cohomology. 
\begin{conj}[{\cite[Conjecture A, p. 222]{Gon95}}]
$H^p(\Gamma_F(n)) \simeq H^p_\Mcal(F, \Qbb(n))$ for $p, n \ge 1$.
\end{conj}
Besides, Goncharov also defined $\Qbb$-vector space $B_n(F)$ ($1\le n \le 3$) to be the quotient of $\Qbb[\Pbb^1_F]$ by a certain subspace $R_n(F)$, which are generated by explicit relations as follows. 
\begin{equation}\label{130}
    \begin{aligned}
     R_1(F) &:= \left<\{x\} + \{y\} - \{xy\}, x, y \in F^\times; \{0\}; \{\infty\}\right>,\\
     R_2(F) &:= \left<\{x\} + \{y\} + \{1-xy\} + \left\{\frac{1-x}{1-xy}\right\} + \left\{\frac{1-y}{1-xy}\right\}, x, y \in F^\times\setminus \{1\}; \{0\}; \{\infty\}\right>,
\end{aligned}
\end{equation}
and $R_3(F)$ is generated by explicit relations corresponding to the functional equations for the trilogarithm (see \cite[p. 214]{Gon95}) that we do not mention here. We still denote by $\{x\}_k$ the class of $\{x\}\in \Qbb[\Pbb^1_F]$ in $B_k(F)$.  As Goncharov's constructions, $B_1(F) = \Bscr_1(F) = F^\times_\Qbb$ (see \cite[Section 1.8-1.9]{Gon95}). And it is proved by De Jeu that $B_2(F) = \Bscr_2(F)$ (see \cite[Remark 5.3]{dJ00}). Goncharov showed that $R_3(F) \subset \Rscr_3(F)$ and conjectured that they are equal (see \cite[p. 225]{Gon95}).

Goncharov then constructed the polylogarithmic complexes $\Gamma(F, n)$ for $n=2, 3$ with the same shape as $\Gamma_F(n)$ but $\Bscr_n(F)$ are replaced by $B_n(F)$ (notice that these complexes are denoted by $B_F(n)$ in \cite[Section 1.8]{Gon95}). In this article, we only use the vector spaces with explicit relations $B_n(F)$ and the corresponding polylogarithmic complexes $\Gamma(F, n)$ for $n = 2, 3$. For $n =2$, it is given as follows, in degree 1 and 2:
\begin{equation*}
    \Gamma(F, 2) :\qquad  B_2(F) \xrightarrow{\alpha_2(1)} \bigwedge^2 F^\times_\Qbb,\quad \{x\}_2 \mapsto (1-x) \wedge x.
\end{equation*}
We have $H^2 (\Gamma(F, 2)) \simeq H^2_\Mcal(F, \Qbb(2))$ by Matsumoto's theorem. And $H^1(\Gamma(F, 2)) \simeq H^1_\Mcal(F, \Qbb(2))$ by Suslin's work \cite{Sus91}. The group $H^1(\Gamma(F, 2))$ is also called \textit{Bloch group}, denoted by $\Bcal(F)$ (see \cite[Section\ 2]{Zag91}).

For $n=3$, we have the following polylogarithmic complex in degree 1 to 3:
\begin{equation}\label{Gamma(F,3)}
\begin{tikzcd}[cramped,row sep=small]
    \Gamma(F, 3):& B_3(F) \arrow[r, "\alpha_3(1)"] & B_2(F)\otimes F^\times_\mathbb{Q} \arrow[r, "\alpha_3(2)"] &\bigwedge^3 F^\times_\mathbb{Q},\\
    &\{x\}_3 \arrow[r, mapsto, shorten=2mm]& \{x\}_2 \otimes x &\\
    & & \{x\}_2 \otimes y \arrow[r, mapsto, shorten=1mm] & (1-x)\wedge x \wedge y.
\end{tikzcd}
\end{equation}
We have $H^3 (\Gamma(F, 3)) \simeq H^3_\Mcal(F, \Qbb(3)).$ In degree 2, Goncharov constructed a map $K_4(F)_\Qbb \to H^2(\Gamma(F, 3))$ and  conjectured that this induces an isomorphism $H^2_\Mcal(F, \Qbb(3)) \simeq K_4^{(3)}(F) \to H^2(\Gamma(F, 3))$. De Jeu constructed a map in other direction $H^2(\Gamma(F, 3)) \to H^2_\Mcal(F, \Qbb(3))$. We discuss about the later map in Section \ref{Relating to Gon}.

\subsection{The residue homomorphism of complexes} \label{Gonres}
Let $K$ be a field with a discrete valuation $v$. Denote by $\Ocal_K$, $k_v$, $\pi_v$ the ring of integers, the residue field, and a uniformizer, respectively. Goncharov defined a residue homomorphism on his polylogarithmic complexes (see \cite[Section 1.14]{Gon95})
\begin{equation}
    \partial_v : \Gamma(K, 3) \to \Gamma(k_v, 2)[-1].
\end{equation}
More precisely, it is given by 
\begin{equation}\label{ee}
    \xymatrix{B_3(K) \ar[r]^{\alpha_3(1)\quad} & B_2(K) \otimes K_\Qbb^\times \ar[r]^{\quad\alpha_3(2)} \ar[d]^{\partial_v^{(2)}} & \bigwedge^3 K_\Qbb^\times \ar[d]^{\partial_v^{(3)}}\\
    & B_2(k_v) \ar[r]_{\alpha_2(1)} &  \bigwedge^2 (k_v^\times)_{\Qbb},}
\end{equation}
where the vertical maps are defined as follows. For $f \in K^\times$, we denote by $f_v$ the image of $f \pi_v^{-\mathrm{ord}_v(f)}$ under the canonical map $\Ocal_K^\times \to k_v^\times$. We have
\begin{align}\label{Gonresmap}
    &\partial_v^{(2)} : \{f\}_2 \otimes g \mapsto \mathrm{ord}_v(g) \{f(v)\}_2, \text{ with the convention }\{0\}_2 = \{1\}_2 = \{\infty\}_2 = 0 \text{ in } B_2(k_v), \\
    &\partial_v^{(3)} : f \wedge g \wedge h \mapsto \mathrm{ord}_v(f) g_v \wedge h_v - \mathrm{ord}_v(g) f_v \wedge h_v + \mathrm{ord}_v(h) f_v \wedge g_v.
\end{align}

Now let $C$ be a smooth connected curve over a number field $k$ and let $F$ be its function field. Denote by $C^1$ the set of closed points of $C$, and $k(x)$ the residue field of $x \in C^1$. We have the following morphism of complexes
\begin{equation}\label{eee}
\partial := \oplus_{x\in C^1} \partial_x : \Gamma(F, 3) \to \bigoplus_{x\in C^1} \Gamma(k(x), 2)[-1].
\end{equation}
Goncharov defined the polylogarithmic complex $\Gamma(C, 3)$ as the mapping cone of \eqref{eee}. We then have the following exact sequence
\begin{equation}\label{cc}
    0 \to H^2(\Gamma(C, 3)) \to H^2(\Gamma (F, 3)) \xrightarrow{\partial} \bigoplus_{x \in C^1}H^{1}(\Gamma(k(x), 2)),
\end{equation}
where the left exactness is due to the fact that the cohomology group of degree 0 of polylogarithmic complexes vanishes. This should be isomorphic to the localization sequence of motivic cohomology \eqref{motiviclocalization}. We will construct a morphism from complexes \eqref{cc} to \eqref{motiviclocalization} in \eqref{dJdiagram}, using work of De Jeu.

\subsection{Goncharov's regulator maps}\label{Gonregulator} In this section, we recall Goncharov's regulator maps. Let $X$ be a regular variety over a number field. Denote by $F$ the function field of $X$. For a nonempty open subscheme $U \subset X$, $U(\Cbb)$ denotes the set of complex points of $U \times_\Qbb \Cbb$. Let $\Omega^j (\eta_X) := \varinjlim_{U \subset X \text{ open}} \Omega^j(U)$ and $\Omega ^j (U)$ is the space of real smooth $j$-forms on $U(\Cbb)$. Goncharov gave explicitly a homomorphism of complexes (see \cite[Theorem 2.2]{Gon98}):
\begin{equation}
\xymatrix{B_3(F) \ar[rr]^{\alpha_3(1)\quad \quad \ } \ar[d]^{r_3(1)} && B_{2} (F) \otimes F^\times_\Qbb \ar[rr]^{\quad \quad \quad  \alpha_3(2)} \ar[d]^{r_3(2)} && \bigwedge^3 F^\times_\Qbb \ar[d]^{r_3(3)}\\
\Omega^0(\eta_X) \ar[rr]^{d} && \Omega^1(\eta_X) \ar[rr]^{d} && \Omega^{2}(\eta_X).}
\end{equation}
For $f\in F^\times\setminus \{1\}$, $g, h \in F^\times$, the vertical maps in degrees 2 and 3 are given respectively by
$$r_3(2) : \{f\}_2 \otimes g \mapsto \rho(f, g), \qquad r_3(3) : f \wedge g \wedge h \mapsto -\eta(f, g, h),$$ where 
\begin{equation}\label{kk}
\begin{aligned}
    \eta(f, g, h) &:= \log|f| \left(\dfrac{1}{3} d\log|g|\wedge d\log|h| - d\arg(g) \wedge d\arg(h)\right)\\
    &\quad+ \log|g| \left(\dfrac{1}{3} d\log|h|\wedge d\log|f| - d\arg(h) \wedge d\arg(f)\right)\\
    &\quad+ \log|h| \left(\dfrac{1}{3} d\log|f|\wedge d\log|g| - d\arg(f) \wedge d\arg(g)\right),
    \end{aligned}
\end{equation} 
and
\begin{equation}\label{bb}
     \rho(f, g) := - D(f)\  d \arg g + \dfrac{1}{3} \log |g| \theta(1-f, f),
\end{equation}
where
\begin{equation}
    \theta(h, f) = \log|h| d \log |f| - \log |f| d \log |h|,
\end{equation}
and $D : \Pbb^1(\Cbb) \to \Rbb$ is the Bloch-Wigner dilogarithm function \eqref{dilog}.  In particular, we have $$d \rho(f, g) = -\eta(1-f, f, g) = \eta(f, 1-f, g) \text{ for } f \in F^\times\setminus \{1\}, g\in F^\times.$$ 

Let $C$ be a smooth connected curve over a number field and let $F$ be its function field. Goncharov showed that the map $r_3(2)$ gives rise to a regulator map  on the cohomology of his complexes of the function field
\begin{equation}\label{r32F}
    r_3(2)_F : H^2(\Gamma(F, 3)) \to H^2_\Dcal(F, \Rbb(3)) \simeq H^1(F, \Rbb(2))^+
\end{equation}
(see \cite[Section\ 2.7]{Gon98}). Moreover, the map $r_3(2)_F$ is compatible with taking residues \eqref{eee} (see \cite[Theorem 2.6]{Gon98}), hence it extends to a homomorphism
\begin{equation}\label{Gonmap}
    r_3(2)_C : H^2(\Gamma(C, 3)) \to H^2_\Dcal (C\otimes_\Qbb \Rbb, \Rbb(3)) \simeq H^1(C(\Cbb), \Rbb(2))^+.
\end{equation}

Now let us compute the residues of the differential form representing $r_{3}(2)_F(\alpha)$ for $\alpha \in H^2(\Gamma(F, 3))$. First let us recall the definition of the residues of a differential form.
\begin{definition}\cite[Definition 7.3]{BZ20}\label{resmap}
    Let $C$ be a smooth connected curve over a number field and let $Z$ be a subset of closed points of $C$. We set $Y := C \setminus Z$. Let $\eta \in H^1(Y(\Cbb), \Rbb)$. The residue of $\eta$ at $p \in C(\Cbb)$ is
\begin{equation}\label{residue}
\mathrm{Res}_p (\eta) = \int_{\gamma_p} \eta,
\end{equation}
where $\gamma_p$ is the boundary of any small disc containing $p$ and avoiding $Z(\Cbb) \setminus \{p\}$. 
\end{definition}

\begin{lm}\label{4.3}
   Let $\alpha = \sum_j c_j \{f_j\}_2 \otimes g_j \in H^2(\Gamma(F, 3))$ with $c_j \in \Qbb$, and $f_j  \in F^\times \setminus \{1\}, g_j \in F^\times$. Denote by $Z$ the closed subset of $C$ consisting of zeros and poles of $f_j, 1-f_j, g_j$ for all $j$. Set $Y = C \setminus Z$. Then $r_3(2)_F(\alpha)$ is represented by the differential form $ \rho := \sum_j c_j \rho(f_j, g_j) \in H^1(Y(\Cbb), \Rbb(2))^+$ and its residue at $p \in C(\Cbb)$ is given by
    \begin{equation*}
        \Res_p(\rho) = -2 \pi \sum_j c_j v_p(g_j) D(f_j(p)),
    \end{equation*}
    where $D$ is the Bloch-Wigner dilogarithm function \eqref{dilog}.
\end{lm}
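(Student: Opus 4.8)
The first assertion is essentially the definition of the regulator map unwound, so I would dispose of it quickly. By the formula for $r_3(2)$ in Section~\ref{Gonregulator}, the cocycle $\sum_j c_j\{f_j\}_2\otimes g_j$ representing $\alpha$ is sent to the $1$-form $\rho=\sum_j c_j\rho(f_j,g_j)$, with $\rho(f,g)=-D(f)\,d\arg g+\tfrac13\log|g|\,\theta(1-f,f)$. Since $D$ is continuous on all of $\Pbb^1(\Cbb)$ and real-analytic off $\{0,1,\infty\}$, each $\rho(f_j,g_j)$ is smooth away from the zeros and poles of $f_j$, $1-f_j$ and $g_j$, so $\rho$ is a smooth $1$-form on $Y(\Cbb)$. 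As $\alpha$ is a cocycle, $\sum_j c_j(1-f_j)\wedge f_j\wedge g_j=0$ in $\bigwedge^3 F^\times_\Qbb$, and the morphism-of-complexes property ($d\rho(f,g)=-\eta(1-f,f,g)$ together with $r_3(3)(f\wedge g\wedge h)=-\eta(f,g,h)$, $\eta$ being additive) gives $d\rho=r_3(3)\big(\sum_j c_j(1-f_j)\wedge f_j\wedge g_j\big)=0$. Hence $\rho$ is closed on $Y(\Cbb)$ and by construction its class is $r_3(2)_F(\alpha)$, lying in the $F_\dR$-invariant part by Goncharov's theorem (or directly from $D(\bar z)=-D(z)$, $c^*\log|h|=\log|h|$, $c^*d\arg h=-d\arg h$).

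For the residue at $p\in C(\Cbb)$ I would work in a local analytic coordinate $\pi$ centered at $p$ and take $\gamma_p=\{|\pi|=r\}$ for $r$ small enough to avoid $Z(\Cbb)\setminus\{p\}$; since $\rho$ is closed, $\int_{\gamma_p}\rho$ is independent of such $r$, so $\Res_p(\rho)$ equals its limit as $r\to0$, which I would compute piece by piece. The two local facts I would record on the circle $\{|\pi|=r\}$ are: $d\log|\pi|$ restricts to $0$ while $\log|\pi|$ is the constant $\log r$; and for a function $v$ holomorphic and nonvanishing at $p$, both $d\log|v|$ and $d\arg v$ restrict to $O(r)\,d\theta$ while $\log|v|$ stays bounded. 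Writing $f_j=u_j\pi^{a_j}$, $g_j=w_j\pi^{b_j}$ and $1-f_j=u'_j\pi^{a'_j}$ near $p$ with $u_j,w_j,u'_j$ units at $p$ and $a_j=v_p(f_j)$, $b_j=v_p(g_j)$, $a'_j=v_p(1-f_j)$, the argument principle gives $\int_{\gamma_p}d\arg g_j=2\pi b_j+O(r)$, while $D(f_j)\to D(f_j(p))$ uniformly on $\gamma_p$ by continuity; since $\int_{\gamma_p}|d\arg g_j|$ stays bounded, the first piece of $\rho(f_j,g_j)$ contributes $\int_{\gamma_p}\!\big(-D(f_j)\,d\arg g_j\big)\to-2\pi b_j\,D(f_j(p))$.

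The second piece contributes nothing, and this is the point that requires care. Restricting $\theta(1-f_j,f_j)=\log|1-f_j|\,d\log|f_j|-\log|f_j|\,d\log|1-f_j|$ to $\{|\pi|=r\}$ annihilates every $d\log|\pi|$ term, leaving $\big(a'_j\log r+O(1)\big)\,d\log|u_j|-\big(a_j\log r+O(1)\big)\,d\log|u'_j|=O(r\log r)\,d\theta$; multiplying by $\log|g_j|=b_j\log r+O(1)$ produces $O\big(r(\log r)^2\big)\,d\theta$, whose integral over $\gamma_p$ tends to $0$. The reason a more naive argument fails here is that a sup-norm bound on $\theta(1-f_j,f_j)$ alone diverges near $p$; one must exploit the factor $r$ coming from restricting $d\log$ of a unit to the shrinking circle, which outweighs the logarithmic blow-up of $\log|g_j|$, $\log|f_j|$ and $\log|1-f_j|$. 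Assembling the two pieces yields $\Res_p(\rho)=\sum_j c_j\big(-2\pi v_p(g_j)D(f_j(p))\big)=-2\pi\sum_j c_j v_p(g_j)D(f_j(p))$, consistently with the fact that whenever $v_p(f_j)\neq0$ one has $f_j(p)\in\{0,\infty\}$ and $D(f_j(p))=0$, and whenever $v_p(1-f_j)\neq0$ one has $f_j(p)=1$ and $D(1)=0$.
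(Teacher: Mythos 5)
Your proposal is correct and follows essentially the same route as the paper: both reduce the first claim to the definition of $r_3(2)_F$, then compute the residue in a local coordinate by splitting $\rho(f_j,g_j)$ into the $-D(f_j)\,d\arg g_j$ piece (which contributes $-2\pi v_p(g_j)D(f_j(p))$ after isolating the $O(r)$ contribution of the unit part of $g_j$) and the $\tfrac13\log|g_j|\,\theta(1-f_j,f_j)$ piece (which vanishes in the limit because restricting $d\log$ of a unit to the shrinking circle produces an $O(r)$ factor beating the logarithmic growth). Your explicit tracking of the extra $\log r$ from $\log|g_j|$, giving $O(r(\log r)^2)$, is a slightly more careful bookkeeping of the same estimate the paper records as $O(r\log r)\,ds$.
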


\begin{proof}
The first statement follows directly from the definition of the map $r_{3}(2)_F$ \eqref{r32F}. Now we compute the residues. Let $f, g \in \Cbb(C)^\times$ such that  all the zeros and poles of $f, 1-f, g$ are contained in $Z$. Let $p \in C(\Cbb)$ and  $\gamma_p$ be a sufficiently small loop  around $p$ and does not surround any point of $Z \setminus \{p\}$.  Using the local coordinate $z = r e^{is}$, for $r >0$ small and $s \in [0, 2 \pi]$, we have
$f(z) = (r e^{is})^{v_p(f)} F(r e^{i s}) \text{ and } g(z) = (r e^{i s})^{v_p(g)} G(r e^{is}),$
where $F$ and $G$ are holomorphic functions such that $F(0), G(0) \neq 0$. Then
\begin{equation}\label{14}
    \begin{aligned}
    \int_{\gamma_p} D(f) d \arg (g) &= \int_0^{2 \pi} D(f(r e^{is})) \ d \arg \left((r e^{is})^{v_p(g)} G(r e^{is})\right)\\
    &= \int_0^{2\pi} D(f(r e^{is})) v_p(g) ds + \int_0^{2\pi} D(f(r e^{is})) \ d \arg G(r e^{is}).
    \end{aligned}
\end{equation}
As
\begin{equation*}
\begin{aligned}
    d \arg G(z) &= \dfrac{1}{2 i} \left(\dfrac{d G}{G} - \dfrac{d \overline{G}}{\overline{G}}\right)
    &= \dfrac{1}{2 i} \left(\dfrac{1}{G} \dfrac{\partial G}{\partial z} d z - \dfrac{1}{\overline{G}} \dfrac{\partial \overline{G}}{\partial \bar z} d \bar z \right),
\end{aligned}
\end{equation*}
we have
\begin{equation*}
d \arg G(r e^{is}) = \dfrac{1}{2 i} \left(\dfrac{G_z}{G} r i e^{is} ds - \dfrac{\overline{G}_{\bar z}}{\overline{G}} r (-i) e^{-is} ds\right) = O(r) ds,
\end{equation*}
where $G_z$ is the derivative of $G$ with respect to $z$. Then by taking $r\to 0$ in  (\ref{14}), the limit of $\int_{\gamma_p} D(f) d \arg(g)$ as $\gamma_p$ shrinks to $p$ is
\begin{equation}\label{15}
    \int_0^{2\pi} D(f(p)) v_p(g) ds = 2\pi v_p(g) D(f(p)).
\end{equation}
Moreover, we have
\begin{equation*}
     \log|f| = \log|F(r e^{is})| + v_p(f) \log r, 
\end{equation*}
and 
\begin{equation*}
    d \log|f| = d \log |F| = \dfrac{1}{2}\left(\dfrac{dF}{F}+\dfrac{d\overline{F}}{\overline{F}}\right)  = O(r) ds.
\end{equation*}
Therefore, $\theta(1-f, f) = O(r \log r) \ ds \ \to 0 \text{ as } r \to 0.$
Hence $$\Res_p(\rho) =  -2\pi \sum_j c_j v_p(g_j) D(f_j(p)), \text{ for } p \in C(\Cbb).$$
\end{proof}

\section{De Jeu's polylogarithmic complexes}\label{DeJeu}

In this section, we recall De Jeu's polylogarithmic complexes and his maps from the cohomology of these complexes to the motivic cohomology. In particular, they give rise to maps from the cohomology of Goncharov's polylogarithmic complexes to the motivic cohomology. We then compare the images of Goncharov's regulator and Beilinson's regulator composed with these maps. These results are used in the construction of the motivic cohomology classes in Section  \ref{motivic}.  
In this article, we consider only the cases of the polylogarithmic complexes of weight 2 and weight 3. The references for this section are \cite{dJ95, dJ96, dJ00}.

\subsection{De Jeu's polylogarithmic complexes}\label{DeJeucomplex} Let $F$ be a field of characteristic 0. De Jeu  defined $\widetilde{M}_{(j)}(F)$ to  be a certain $\Qbb$-vector space generated by symbols $[f]_j$ with $f \in F^\times \setminus \{1\}$ and constructed the following complex in degree 1 to 2
\begin{equation}
    \widetilde \Mcal_{(2)}^\bullet (F):\qquad 
    \widetilde M_{(2)} (F) \to \bigwedge^2F^\times_\Qbb, \quad [f]_2 \mapsto (1-f)\wedge f,
\end{equation}
and this complex in degree 1 to 3
\begin{equation}
\begin{tikzcd}[cramped,row sep=small]
    \widetilde{\Mcal}_{(3)}^\bullet(F):& \widetilde{M}_{(3)}(F) \arrow[r] & \widetilde{M}_{(2)}(F) \otimes F_{\Qbb}^\times \arrow[r] &\bigwedge^3 F^\times_\mathbb{Q},\\
    &{[f]}_3 \arrow[r, mapsto, shorten=2mm] & {[f]}_2 \otimes f &\\
    & & {[f]}_2 \otimes g \arrow[r, mapsto, shorten=1mm] & (1-f)\wedge f \wedge g
\end{tikzcd}
\end{equation}
(see \cite[Corollary 3.22, Example 3.24]{dJ95} or \cite[Section 2]{dJ00}). We have $H^n(\widetilde\Mcal_{(n)}^\bullet (F)) \simeq H^n_\Mcal(F, \Qbb(n))$ for $n \in \{2,3\}$. Let $k$ be a number field. By Suslin's work, we have the following isomorphism (up to a universal choice of sign) (see \cite[Theorem 5.3]{dJ95} or \cite[Theorem 2.3]{dJ00})
\begin{equation}\label{phin1}
    \varphi_{(2)}^1 : H^1(\widetilde \Mcal_{(2)}^\bullet(k)) \xrightarrow{\simeq} H^1_\Mcal(k, \Qbb(2)).
\end{equation} 
Let $\sigma : k \hookrightarrow \Cbb$ be any embedding of $k$ into $\Cbb$. We consider the following composition map \begin{equation}\label{bjg}
    H^1(\widetilde{\Mcal}_{(2)}^\bullet(k)) \xrightarrow{\varphi_{(2)}^1} H^1_\Mcal(k, \Qbb(2)) \xrightarrow{\reg_{k}} \Rbb(1),
\end{equation}
where $\reg_k$ is the composition  $H^1_\Mcal(k, \Qbb(2)) \xrightarrow{\sigma^*} H^1_\Mcal(\Cbb, \Qbb(2)) \xrightarrow{\reg_\Cbb} H^1_\Dcal(\Cbb, \Rbb(2)) \simeq \Rbb(1)$; the last isomorphism here is the canonical isomorphism mentioned in Remark \ref{1.5}. It is shown that the map 
\eqref{bjg} is given by $[z]_2$ to $\pm i D(\sigma(z))$,  where $D$ is the Bloch-Wigner dilogarithm (see \cite[Proposition 4.1]{dJ95}). So we can fix the sign of $\varphi_{(2)}^1$ such that it is induced by  $[z]_2 \mapsto i D(\sigma(z))$.

Moreover, De Jeu (\cite[p. 529]{dJ96}) constructed a map (up to a universal choice of sign)
\begin{equation}\label{phi32}
\varphi_{(3)}^2 : H^2(\widetilde{\Mcal}^\bullet_{(3)}(F)) \to H^2_\Mcal(F, \Qbb(3)).
\end{equation}
We discuss more about this map when $F$ is the function field of a curve in the following section.

\subsection{De Jeu's maps}\label{dJmapsection} Let $C$ be a smooth geometrically connected curve over a number field $k$. Denote by $F$ the function field of $C$ and $k(x)$ the residue field of a closed point $x \in C^1$. De Jeu \cite[Proposition 5.1] {dJ96} also defined the residue map 
\begin{equation}\label{DJresiduemap}
    \delta : \widetilde \Mcal_{(3)}^\bullet (F) \to \bigoplus_{x \in C^1} \widetilde \Mcal_{(2)}^\bullet (k(x))[-1]
\end{equation}
similarly as Goncharov's residue map \eqref{eee}. The complex $\widetilde \Mcal_{(3)}^\bullet(C)$ is also defined to be the mapping cone of \eqref{DJresiduemap}. As the maps $\varphi_{(3)}^2$ \eqref{phi32} and $\varphi_{(2)}^1$ \eqref{phin1} are defined universally up to sign, we have the following (possibly non-commutative) diagram
    \begin{equation*}
          \begin{split}  \xymatrix{
H^2(\widetilde \Mcal^\bullet_{(3)}(F))  \ar[d]_{\pm 2\delta} \ar[r]^{\varphi_{(3)}^2} & H^2_\Mcal(F, \Qbb(3)) \ar[d]^{\Res^\Mcal\hspace{0.9cm}} 
\\
\bigoplus_{x\in C^1} H^1 (\widetilde \Mcal^\bullet_{(2)}(k(x))) \ar[r]^{\simeq}_{\varphi_{(2)}^1}
 & \bigoplus_{x \in C^1} H^1_\Mcal(k(x), \Qbb(2)). }
 \end{split} 
    \end{equation*}
Recall that we have the following cup product of $K$-groups
\begin{equation*}
   \cup: K_3^{(2)}(k) \otimes K_1^{(1)}(F) \to K_4^{(3)}(F).
\end{equation*}
Since $K^{(j)}_{2j-i}(X) \simeq H^i_\Mcal(X, \Qbb(j))$ and $K_1^{(1)}(F) \simeq F_\Qbb^\times$, we have 
\begin{equation*}
     H^1_\Mcal(k, \Qbb(2)) \cup F_\Qbb^\times \subset H^2_\Mcal(F, \Qbb(3)).
\end{equation*}
Denote by $H =  H^1_\Mcal(k, \Qbb(2)) \cup F_\Qbb^\times$. De Jeu showed that $(\Res^\Mcal \circ  \varphi_{(3)}^2) \pm   2(\varphi_{(2)}^1 \circ  \delta)$ has image in  $\Res^\Mcal|_H (H)$ (see \cite[Theorem 5.2]{dJ96}). With the fixed choice of sign of $\varphi_{(2)}^1$ in Section \ref{DeJeucomplex}, we can choose the sign of $\varphi_{(3)}^2$ such that $(\Res^\Mcal \circ  \varphi_{(3)}^2) -   2(\varphi_{(2)}^1 \circ  \delta)$ has image in  $\Res^\Mcal|_H (H)$ (see \cite[diagram (15)]{dJ96}).
    \begin{equation*}
          \begin{split}  \xymatrix{
H^2(\widetilde \Mcal^\bullet_{(3)}(F))  \ar[d]_{2\delta} \ar[r]^{\varphi_{(3)}^2} & H^2_\Mcal(F, \Qbb(3)) \ar[d]^{\Res^\Mcal\hspace{0.9cm}} \   \supset \  H 
\\
\bigoplus_{x\in C^1} H^1 (\widetilde \Mcal^\bullet_{(2)}(k(x))) \ar[r]^{\simeq}_{\varphi_{(2)}^1}
 & \bigoplus_{x \in C^1} H^1_\Mcal(k(x), \Qbb(2)). }
 \end{split} 
    \end{equation*}
Let $\xi \in H^2(\widetilde \Mcal_{(3)}^\bullet(F))$. As $\Res^\Mcal|_H$ is injective (see e.g., \cite[Remark 4.4]{dJ00}), there exists a unique $h\in H$ such that
$$\Res^\Mcal|_H (h) = ((\Res^\Mcal \circ \varphi_{(3)}^2)  -  2(\varphi_{(2)}^1 \circ  \delta))(\xi).$$
Then we define a map
\begin{equation}\label{modification}
    \varphi_F : H^2(\widetilde\Mcal_{(3)}^\bullet(F))  \to H^2_\Mcal(F, \Qbb(3)),
\end{equation}
by setting $ \varphi_F (\xi) := \varphi_{(3)}^2(\xi) - h.$ It is a $\Qbb$-linear map making the following diagram commute
\begin{equation}\label{DeJeudiagram}
  \begin{split}  \xymatrix{
H^2(\widetilde \Mcal^\bullet_{(3)}(F)) \ar[d]_{2\delta} \ar[r]^{\varphi_F} & H^2_\Mcal(F, \Qbb(3)) \ar[d]^{\Res^\Mcal\hspace{0.9cm}} \\
\bigoplus_{x\in C^1} H^1 (\widetilde \Mcal^\bullet_{(2)}(k(x)) \ar[r]^{\simeq}_{ \varphi_{(2)}^1}
 & \bigoplus_{x \in C^1} H^1_\Mcal(k(x), \Qbb(2)).}
 \end{split}
\end{equation}  
This modification was mentioned briefly by De Jeu in \cite[Remark 5.3]{dJ96}. From diagram \eqref{DeJeudiagram}, $\varphi_F$ induces a map 
\begin{equation*}
    \varphi_C: H^2 (\widetilde \Mcal_{(3)}^\bullet(C)) \to H^2_\Mcal(C, \Qbb(3))
\end{equation*}
such that the following diagram commutes
\begin{equation}
\xymatrix{
    0 \ar[r] & H^2(\widetilde \Mcal_{(3)}^\bullet(C)) \ar[r] \ar[d]_{\varphi_C} & H^2(\widetilde \Mcal_{(3)}^\bullet(F)) \ar[r]^{ 2\delta\hspace{0.6cm} } \ar[d]_{\varphi_F} & \bigoplus_{x\in C^1} H^1(\widetilde \Mcal^\bullet_{(2)}(k(x))  \ar[d]^{\simeq} \\ 
     0 \ar[r] & H^2_\Mcal(C, \Qbb(3)) \ar[r] & H^2_\Mcal(F, \Qbb(3)) \ar[r]^{\Res^\Mcal\hspace{0.5cm}} & \bigoplus_{x\in C^1} H^1_\Mcal(k(x), \Qbb(2)),}
\end{equation}
where the lower horizontal sequence is the localization sequence of motivic cohomology mentioned in \eqref{motiviclocalization}.

\subsection{Relation to Goncharov's complexes}\label{Relating to Gon}
Let $F$ be an arbitrary field of characteristic 0.  De Jeu showed that there is a map $B_2(F) \to \widetilde{M}_{(2)}(F)$ given by $ \ \{x\}_2 \mapsto [x]_2$ (see \cite[Lemma 5.2]{dJ00}). This map 
fits into the following commutative diagram
\begin{equation}\label{psi12}
    \xymatrix{
    \Gamma(F, 2) : & B_2(F) \ar[r]\ar[d] & \bigwedge^2 F_\Qbb^\times \ar@{=}[d]\\
    \widetilde{\Mcal}_{(2)}^\bullet (F) : &\widetilde{M}_{(2)}(F) \ar[r]& \bigwedge^2 F_\Qbb^\times.
    }
\end{equation}
This diagram gives rise to a map $\psi_{(2)}^1: H^1(\Gamma(F, 2)) \to H^1(\widetilde{\Mcal}^\bullet_{(2)}(F))$. In particular, if $k$ is a number field, we have $ \psi_{(2)}^1: H^1(\Gamma(k, 2)) \xrightarrow{\simeq} H^1(\widetilde{\Mcal}^\bullet_{(2)}(k))$
is an isomorphism (see \cite[Section 5]{dJ95}). We then define $\beta_{(2)}^1 :  H^1(\Gamma(k, 2)) \to H^1_\Mcal (k, \Qbb(2))$ to be the composition of $\varphi_{(2)}^1$ and $\psi_{(2)}^1$
\begin{equation}
    \xymatrix{
    H^1(\Gamma(k, 2)) \ar[r]^{\beta_{(2)}^1}_{\simeq} \ar[rd]_{\psi_{(2)}^1}^{\simeq}& H^1_\Mcal (k, \Qbb(2))\\
    &H^1(\widetilde{\Mcal}_{(2)}^\bullet(k)) \ar[u]_{\varphi_{(2)}^1}^\simeq.
    }
\end{equation}

The map $B_2(F) \to \widetilde M_{(2)}(F), \{x\}_2 \mapsto [x]_2$ also fits into the following commutative diagram
\begin{equation}\label{psi23}
    \xymatrix{
    \Gamma(F, 3) :& B_3(F) \ar[r] & B_2(F) \otimes F^\times_\Qbb \ar[r] \ar[d] & \bigwedge^3 F^\times_\Qbb \ar@{=}[d]\\
    \widetilde{\Mcal}_{(3)}^\bullet(F) : &\widetilde{M}_{(3)}(F)     \ar[r] & \widetilde{M}_{(2)}(F) \otimes F^\times_\Qbb \ar[r]& \bigwedge^3 F^\times_\Qbb.
    }
\end{equation}
The middle vertical arrow in the diagram \eqref{psi23} sends objects of the form $\{x\}_x \otimes x$ to $[x]_2 \otimes x$, so that it maps the image of $B_3(F)$ to the image of $\widetilde{M}_{(3)}(F)$. It then induces a map
\begin{equation}\label{psiF}
    \psi_F : H^2(\Gamma(F, 3)) \to H^2 (\widetilde{\Mcal}^\bullet_{(3)}(F)).
\end{equation}

Now let $C$ be a smooth geometrically connected curve over a number field $k$. Let $F$ be its function field and for any $x \in C^1$, we denote by $k(x)$ the residue of $C$ at $x$. Since the residue maps of Goncharov and De Jeu are defined similarly, we have the following commutative diagram
\begin{equation*}
    \xymatrix{
    H^2(\Gamma(F, 3))   \ar[d]_{2\partial} \ar[r]^{\psi_F} & H^2 (\widetilde{\Mcal}^\bullet_{(3)}(F)) \ar[d]^{2\delta} \\
    \bigoplus_{x\in C^1} H^1(\Gamma(k(x), 2)) \ar[r]_{\psi_{(2)}^1}^\simeq
&\bigoplus_{x\in C^1} H^1 (\widetilde \Mcal^\bullet_{(2)}(k(x)).
    }
\end{equation*}
Then $\psi_F$ induces a map $\psi_C : H^2(\Gamma(C, 3)) \to H^2(\widetilde \Mcal_{(3)}^\bullet(C))$ that makes the following diagram commute
\begin{equation*}
\xymatrix{
 0 \ar[r] & H^2(\Gamma(C, 3)) \ar[r] \ar[d]^{\psi_C} & H^2(\Gamma(F, 3)) \ar[r]^{2\partial\quad}\ar[d]^{\psi_F} & \bigoplus_{x\in C^1} H^1 (\Gamma(k(x), 2)) \ar[d]\\
    0 \ar[r] & H^2(\widetilde \Mcal_{(3)}^\bullet(C)) \ar[r]  & H^2(\widetilde \Mcal_{(3)}^\bullet(F)) \ar[r]^{2\delta\hspace{0.6cm}}  & \bigoplus_{x\in C^1} H^1(\widetilde \Mcal^\bullet_{(2)}(k(x)).}
\end{equation*}
Putting $\beta_F := \varphi_F \circ \psi_F$, we have the following commutative diagram
\begin{equation}\label{dig1}
\xymatrix{
H^2(\Gamma(F, 3))  \ar@/^2pc/[rr]^{\beta_F}  \ar[d]_{ 2\partial} \ar[r]^{\psi_F} & H^2 (\widetilde{\Mcal}^\bullet_{(3)}(F)) \ar[d]^{2\delta} \ar[r]^{\varphi_F} & H^2_\Mcal(F, \Qbb(3)) \ar[d]^{\Res^\Mcal\hspace{0.9cm}} \\
\bigoplus_{x\in C^1} H^1(\Gamma(k(x), 2)) \ar@/_2pc/[rr]_{\beta_{(2)}^1}^{\simeq} \ar[r]^{\simeq}_{\psi_{(2)}^1}
&\bigoplus_{x\in C^1} H^1 (\widetilde \Mcal^\bullet_{(2)}(k(x)) \ar[r]^{\simeq}_{\varphi_{(2)}^1} & \bigoplus_{x \in C^1} H^1_\Mcal(k(x), \Qbb(2)).}
\end{equation}
Again, the map
\begin{equation}\label{beta}
    \beta_F : H^2(\Gamma(F, 3)) \to H^2_\Mcal(F, \Qbb(3))
\end{equation}
induces a map $\beta_C:  H^2(\Gamma(C, 3)) \to H^2_\Mcal(C, \Qbb(3))$ such that the following diagram commutes
\begin{equation}\label{dJdiagram}
\xymatrix{
    0 \ar[r] & H^2(\Gamma(C, 3)) \ar[r] \ar[d]_{\beta_C} & H^2(\Gamma(F, 3)) \ar[r]^{2\partial\hspace{0.6cm} } \ar[d]^{\beta_F} & \bigoplus_{x \in C^1} H^1(\Gamma(k(x), 2))  \ar[d]_{\simeq}^{\beta_{(2)}^1} \\ 
     0 \ar[r] & H^2_\Mcal(C, \Qbb(3)) \ar[r] & H^2_\Mcal(F, \Qbb(3)) \ar[r]^{\Res^\Mcal\hspace{0.5cm}} & \bigoplus_{x\in C^1} H^1_\Mcal(k(x), \Qbb(2)).}
\end{equation}
In particular, we have $\beta_C = \varphi_C \circ \psi_C$.

\subsection{Regulator maps}\label{regmaps} Let $C$ be a smooth proper geometrically connected curve over a number field $k$ and let $F$ be its function field. We have the following lemma, which is a consequence of De Jeu's theorem \cite[Theorem 3.5]{dJ00} and Goncharov's theorem \cite[Theorem 3.3]{Gon96}.
\begin{lm}[\text{De Jeu}]\label{3.2} 
   Let $\omega$ be a holomorphic 1-form on $C(\Cbb)$ such that $F_\dR(\omega) = \omega$, where $F_\dR$ is the action defined in Section \ref{DeligneCoho}. Let $\alpha = \sum_j c_j \{f_j\}_2 \otimes g_j \in H^2(\Gamma(F, 3))$ with $c_j \in \Qbb$ and $f_j \in F^\times \setminus \{1\},  g_j \in F^\times $. With the fixed sign of $\varphi_{(3)}^2$ as in Section \ref{dJmapsection}, we have 
   \begin{equation}
       \int_{C(\Cbb)} \reg_F (\beta_F(\alpha)) \wedge \bar \omega = 2\int_{C(\Cbb)} r_3(2)_F (\alpha) \wedge\bar \omega,
   \end{equation}
where $\beta_F$ is the map defined in \eqref{beta}, $\reg_F$ is Beilinson's regulator map \eqref{regF}, and $r_3(2)_F$ is Goncharov's regulator map \eqref{r32F}.
\end{lm}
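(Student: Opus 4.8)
The plan is to compare the two regulator maps after passing through De Jeu's auxiliary complexes $\widetilde{\Mcal}^\bullet_{(\bullet)}$, using that both factor, up to controlled correction terms, through explicit Goncharov-type differential forms. Concretely, $\beta_F = \varphi_F \circ \psi_F$, and $\varphi_F$ differs from $\varphi_{(3)}^2$ only by subtraction of an element $h \in H = H^1_\Mcal(k,\Qbb(2)) \cup F^\times_\Qbb$. So I would first treat the "main term'' $\varphi_{(3)}^2 \circ \psi_F$ and then show the correction $h$ contributes nothing to the regulator integral against $\bar\omega$.

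For the main term, I would invoke De Jeu's comparison theorem \cite[Theorem 3.5]{dJ00} together with Goncharov's \cite[Theorem 3.3]{Gon96}: the Beilinson regulator of $\varphi_{(3)}^2(\xi)$ for $\xi \in H^2(\widetilde{\Mcal}^\bullet_{(3)}(F))$ is represented, in $H^1(F,\Rbb(2))^+$, by an explicit differential form built from the same building blocks $D(f)\,d\arg g$ and $\log|g|\,\theta(1-f,f)$ that appear in Goncharov's $\rho(f,g)$ \eqref{bb}, and the two agree up to the universal sign and the normalizing factor $2$ — this is precisely where the factor $2$ in the statement originates, matching the $2\delta$ versus $2\partial$ discrepancy in diagram \eqref{dig1}. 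Pulling $\xi = \psi_F(\alpha)$ back through the commutative diagram \eqref{psi23} identifies the symbol data $\{f_j\}_2 \otimes g_j$ with $[f_j]_2 \otimes g_j$, so the form representing $\reg_F(\varphi_{(3)}^2(\psi_F(\alpha)))$ becomes $2\sum_j c_j\, \rho(f_j,g_j) = 2\, r_3(2)_F(\alpha)$ modulo an exact form. Since $\omega$ is holomorphic and $F_\dR$-invariant, integrating an exact $1$-form wedge $\bar\omega$ over the compact curve $C(\Cbb)$ vanishes by Stokes, so only the cohomology class matters.

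Next I would dispose of the correction term $h \in H$. The key point is that elements of $H = H^1_\Mcal(k,\Qbb(2)) \cup F^\times_\Qbb$ come from the base field $k$ cupped with functions: writing $h = \sum_i \alpha_i \cup g_i$ with $\alpha_i \in H^1_\Mcal(k,\Qbb(2))$, the Beilinson regulator $\reg_F(h)$ is a sum of forms of the type $(\text{constant in }\Rbb(1)) \cdot d\arg g_i$, i.e. it lies in the image of $H^1(\Spec k, \Rbb(2)) \otimes \Omega^1$; such a class, wedged with a holomorphic form $\bar\omega$ and integrated over $C(\Cbb)$, vanishes because the constant factor is real while $\int_{C(\Cbb)} d\arg g_i \wedge \bar\omega = 0$ for a holomorphic $\bar\omega$ (the integrand is, after writing $d\arg g_i$ in terms of $dg_i/g_i$, a $(1,1)$-part against a $(0,1)$-form, hence of type $(1,2)$ on a curve, so zero — alternatively it is exact up to residues which cancel by the $+$-invariance). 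I would make this precise by the same local computation as in the proof of Lemma \ref{4.3}, or more cleanly by noting $h$ lies in the image of $H^2_\Mcal(C',\Qbb(3))$ for a suitable open $C'$ pulled back from $\Spec k$, on which the regulator integral against $\bar\omega$ is manifestly trivial. Adding the two contributions gives the claimed identity.

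The main obstacle I anticipate is bookkeeping the signs and normalization factors: De Jeu's $\varphi_{(3)}^2$ and $\varphi_{(2)}^1$ are only pinned down up to a universal sign, and Goncharov's regulator and De Jeu's regulator use slightly different conventions (the factor $1/3$ in \eqref{kk}, \eqref{bb}, and the $2$ in the residue compatibility). Ensuring that the sign of $\varphi_{(3)}^2$ fixed in Section \ref{dJmapsection} — the one making $(\Res^\Mcal \circ \varphi_{(3)}^2) - 2(\varphi_{(2)}^1 \circ \delta)$ land in $\Res^\Mcal|_H(H)$ — is compatible with the sign choice $[z]_2 \mapsto i\,D(\sigma(z))$ for $\varphi_{(2)}^1$, and that together these force the coefficient $2$ (rather than $\pm 2$ or $\pm 1$) on the right-hand side, requires carefully tracing De Jeu's Theorem 3.5 through diagram \eqref{dig1}. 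The differential-form computations themselves are routine given Lemma \ref{4.3} and the explicit shape of Goncharov's and De Jeu's regulators; it is the coherence of all the normalizations that will take the most care.
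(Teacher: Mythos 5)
Your proposal follows essentially the same route as the paper: the correction term $h \in H^1_\Mcal(k,\Qbb(2))\cup F^\times_\Qbb$ is disposed of because the regulator integral against $\bar\omega$ factors through the quotient by this subgroup (the paper cites \cite[Theorem 4.2]{dJ96} for exactly this), and the main comparison is precisely the juxtaposition of \cite[Theorem 3.5]{dJ00} and \cite[Theorem 3.3]{Gon96}, which express the two integrals as $\tfrac{8}{3}$ resp.\ $\tfrac{4}{3}$ times the common quantity $\sum_j c_j\int_{C(\Cbb)}\log|g_j|\,\theta(1-f_j,f_j)\wedge\bar\omega$, whence the factor $2$. One small caution: your pointwise type argument for $\int_{C(\Cbb)} d\arg g_i\wedge\bar\omega=0$ is not correct as stated, since $d\arg g_i\wedge\bar\omega$ has a genuine $(1,1)$ component; but your fallback (Stokes applied to $\log|g_i|\,\bar\omega$, or the factorization through the quotient by $H$) is exactly the paper's argument.
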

\begin{proof}
We consider the regulator integral \eqref{regintF}
\begin{equation*}
    H^2_\Mcal(F, \Qbb(3)) \xrightarrow{\reg_F} H^1 (F, \Rbb(2))^+ \xrightarrow{\eta \mapsto \int_{C(\Cbb)} \eta \wedge\bar \omega} \Rbb(1).
\end{equation*}
The image of $H^1_\Mcal(k, \Qbb(2)) \cup F_\Qbb^\times$ under the regulator integral is trivial (see \cite[Theorem 4.2]{dJ96}). This can be seen by noting that 
\begin{equation*}
    \int_{C(\Cbb)} d \arg g \wedge \omega =  2\pi \int_{g^{-1} (\Rbb_{>0})}  \omega = 2\pi \int_{0}^\infty g_*\omega =0,
\end{equation*}
where $g_*$ is the pushforward by the correspondence from $C$ to $\Pbb^1$, and the fact that $g_*\omega = 0$ since $\Pbb^1$ has no holomorphic forms. Hence, for $\xi \in H^2(\widetilde\Mcal_{(3)}^\bullet(F))$, we have
\begin{equation}\label{factorquotient}
    \begin{aligned}
      \int_{C(\Cbb)} \reg_F(\varphi_F (
     \xi
     )) \wedge \bar \omega &= \int_{C(\Cbb)} \reg_F(\varphi_{(3)}^2(\xi) - h) \wedge \bar \omega 
\quad \text{for some } h \in H^1_\Mcal(k, \Qbb(2)) \cup F_\Qbb^\times \ (\text{see }\eqref{modification}) \\
&= \int_{C(\Cbb)} \reg_F (\varphi_{(3)}^2(\xi)) \wedge \bar \omega - \int_{C(\Cbb)} \reg_F (h) \wedge \bar \omega \\
&= \int_{C(\Cbb)} \reg_F(\varphi_{(3)}^2(\xi)) \wedge\bar \omega.
\end{aligned}
\end{equation}
Now let $\alpha = \sum_j c_j \{f_j\}_2 \otimes g_j \in H^2 (\Gamma(F, 3))$.  Then $\psi_F(\alpha) = \sum_j c_j [f_j]_2 \otimes g_j \in H^2(\widetilde{\Mcal}_{(3)}^\bullet(F))$. Using \eqref{factorquotient} with $\xi = \psi_F(\alpha)$,  we have  
\begin{equation*}
\begin{aligned}
      \int_{C(\Cbb)} \reg_F(\beta_F(\alpha)) \wedge \bar \omega =  \int_{C(\Cbb)} \reg_F(\varphi_F (\xi)) \wedge \bar \omega   &= \int_{C(\Cbb)} \reg_F(\varphi_{(3)}^2(\xi)) \wedge \bar \omega.  \\
\end{aligned}
\end{equation*}
With the fixed sign of $\varphi_{(3)}^2$ in Section \ref{dJmapsection}, one can show that
\begin{equation*}
    \int_{C(\Cbb)} \reg_F(\varphi_{(3)}^2(\xi)) \wedge \bar \omega = \dfrac{8}{3} \sum_j c_j \int_{C(\Cbb)} \log|g_j| \theta(1-f_j, f_j) \wedge \bar \omega
\end{equation*}
(see \cite[Theorem 3.5]{dJ00}). On the other hand, by some computations, one obtains the following formula
\begin{equation*}
    \int_{C(\Cbb)} r_3(2)_F (\alpha) \wedge\bar \omega = \dfrac{4}{3} \sum_j c_j \int_{C(\Cbb)} \log|g_j| \theta(1-f_j, f_j) \wedge \bar \omega
\end{equation*}
(see \cite[Theorem 3.3]{Gon96}). Therefore, we have 
\begin{equation*}
    \int_{C(\Cbb)} \reg_F (\beta_F(\alpha)) \wedge \bar \omega = 2 \int_{C(\Cbb)}  r_3(2)_F(\alpha) \wedge \bar \omega.
\end{equation*}
\end{proof}

As $C$ is proper, the map
\begin{equation}\label{injmap}
    H^1(C(\Cbb), \Rbb(2))^+ \to \Hom(H^0(C(\Cbb), \Omega^1)^+, \Rbb(1)), \qquad \eta \mapsto (\omega \mapsto \int_{C(\Cbb)} \eta \wedge \bar \omega)
\end{equation}
is injective (see \cite[Remark 3.1]{dJ00}). By Lemma \ref{3.2}, for $\alpha \in H^2(\Gamma(C, 3))$, we have
 \begin{equation*}
       \int_{C(\Cbb)} \reg_C (\beta_C(\alpha)) \wedge \bar \omega = 2\int_{C(\Cbb)} r_3(2)_C (\alpha) \wedge\bar \omega,
   \end{equation*}
where $r_3(2)_C$ is Goncharov's regulator map (\ref{Gonmap}). Therefore, we have
the following commutative diagram
\begin{equation}\label{124}
    \xymatrix{
       H^2(\Gamma(C, 3))  \ar[r]^{\psi_C}  \ar@/^2pc/[rr]^{\beta_C} \ar[rd]_{r_3(2)_C} &H^2(\widetilde \Mcal_{(3)}^\bullet(C))\ar[r]^{\varphi_C} \ar[d] & H^2_\Mcal(C, \Qbb(3))  \ar[ld]^{\frac{1}{2}\reg_C}\\
     &H^1(C(\Cbb), \Rbb(2))^+,&
    }
\end{equation}
where  the middle vertical map is just the composition $\frac{1}{2} \reg_C \circ \varphi_C$. In \cite[Corollary 5.5]{dJ00}, De Jeu showed that the images of the $r_3(2)_C$ and $\reg_C$, as vector spaces, are the same.

\begin{lm}\label{integral}  Let $\alpha = \sum_j c_j \{f_j\}_2 \otimes g_j \in H^2(\Gamma(F, 3))$  with $c_j \in \Qbb$ and $f_j\in F^\times \setminus \{1\}$, $ g_j \in F^\times$. Denote by $Y = C\setminus Z$ where $Z$ is the closed subscheme of $C$ consisting of the zeros and poles of $f_j, 1-f_j, g_j$ for all $j$. With the fixed choice of signs of $\varphi_{(2)}^1$ and $\varphi_{(3)}^2$, we have
    \begin{equation*}
        \int_\gamma  \reg_F(\beta_F(\alpha)) = 2 \int_\gamma  r_3(2)_F(\alpha) \qquad \text{ for any loop } \gamma \in H_1(Y(\Cbb), \Zbb).
    \end{equation*}
\end{lm}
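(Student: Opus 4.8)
The plan is to prove the \emph{cohomological} strengthening of Lemma \ref{3.2}: the difference
\[
D_\alpha \;:=\; \reg_F(\beta_F(\alpha)) - 2\, r_3(2)_F(\alpha) \;\in\; H^1(Y(\Cbb),\Rbb(2))^+,
\]
which is represented by differential forms on $Y(\Cbb)$ with at worst logarithmic singularities along $S(\Cbb)$, is zero in $H^1(Y(\Cbb),\Rbb)$. Since $H_1(Y(\Cbb),\Zbb)$ is finitely generated and free, this is equivalent to $\int_\gamma D_\alpha = 0$ for every $\gamma\in H_1(Y(\Cbb),\Zbb)$, which is exactly the asserted identity. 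As $S=C\setminus Y$ is a finite set of closed points and $Y(\Cbb)$ is connected, the Gysin sequence gives an exact sequence
\[
0 \to H^1(C(\Cbb),\Rbb) \to H^1(Y(\Cbb),\Rbb) \xrightarrow{\ (\Res_p)_p\ } \bigoplus_{p\in S(\Cbb)} \Rbb,
\]
so it suffices to prove: (a) $\Res_p(D_\alpha)=0$ for every $p\in S(\Cbb)$ (vanishing of the periods over small loops), and (b) the induced class $\overline{D_\alpha}\in H^1(C(\Cbb),\Rbb(2))^+$ vanishes (vanishing of the remaining periods).

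For (a) I would compute the two residues at a point $p\in S(\Cbb)$ and compare them through their compatibilities with the residue maps. By Lemma \ref{4.3}, $\Res_p(r_3(2)_F(\alpha)) = -2\pi\sum_j c_j v_p(g_j) D(f_j(p))$. On the Beilinson side, the regulator is compatible with the localization (residue) maps, so $\Res_p(\reg_F(\beta_F(\alpha)))$ is obtained by applying $\reg_{k(p)}$ to the motivic residue $\Res^\Mcal_p(\beta_F(\alpha))$; the commutative square in \eqref{dJdiagram}, whose left vertical arrow is $2\partial$, identifies this motivic residue with $\beta_{(2)}^1\bigl(2\partial_p\alpha\bigr) = 2\sum_j c_j v_p(g_j)\,\beta_{(2)}^1(\{f_j(p)\}_2)$. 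With the signs of $\varphi_{(2)}^1$ and $\varphi_{(3)}^2$ normalized as in Sections \ref{DeJeucomplex}--\ref{dJmapsection} (so that $\reg_{k(p)}\circ\beta_{(2)}^1$ carries $\{z\}_2$ to the Bloch--Wigner value with the same constant as in Lemma \ref{4.3}, the weight-$2$ analogue of Lemma \ref{3.2} holding without a factor of $2$), this equals $2\bigl(-2\pi\sum_j c_j v_p(g_j) D(f_j(p))\bigr) = 2\,\Res_p(r_3(2)_F(\alpha))$. Hence $\Res_p(D_\alpha)=0$.

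For (b), once (a) is known $D_\alpha$ comes, via the $F_\dR$-equivariant injection $H^1(C(\Cbb),\Rbb)\hookrightarrow H^1(Y(\Cbb),\Rbb)$, from a class $\overline{D_\alpha}\in H^1(C(\Cbb),\Rbb(2))^+$. For any holomorphic $1$-form $\omega$ on $C(\Cbb)$ with $F_\dR(\omega)=\omega$, the regulator integral $\int_{C(\Cbb)}\overline{D_\alpha}\wedge\bar\omega$ may be evaluated using the mildly singular $L^1$-representatives of $D_\alpha$ on $Y(\Cbb)$ (this is how the function-field regulator integral \eqref{regintF} is defined, the boundary terms in Stokes' theorem over $C(\Cbb)$ with small disks about $S(\Cbb)$ removed being $O(\varepsilon(\log\varepsilon)^2)\to 0$), so
\[
\int_{C(\Cbb)}\overline{D_\alpha}\wedge\bar\omega = \int_{C(\Cbb)}\reg_F(\beta_F(\alpha))\wedge\bar\omega - 2\int_{C(\Cbb)}r_3(2)_F(\alpha)\wedge\bar\omega,
\]
which vanishes by Lemma \ref{3.2}. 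Since the pairing $H^1(C(\Cbb),\Rbb(2))^+ \to \Hom\bigl(H^0(C(\Cbb),\Omega^1)^+,\Rbb(1)\bigr)$ recalled in Section \ref{regmaps} is injective, $\overline{D_\alpha}=0$, and combined with (a) this shows all periods of $D_\alpha$ vanish.

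The main obstacle is step (a): it requires reconciling the Deligne-cohomology residue of the Beilinson regulator (obtained via the Gysin sequence together with the square \eqref{dJdiagram}) with the explicit Bloch--Wigner computation of Lemma \ref{4.3}, keeping exact track of all normalizations — the powers of $2\pi$ and $i$, the $\Rbb(n)$ conventions, and De Jeu's universal sign choices for $\varphi_{(2)}^1$ and $\varphi_{(3)}^2$ — the point being that the numerical factor produced is precisely the factor $2$ recorded as ``$2\partial$'' in \eqref{dJdiagram} and as the ``$2$'' in Lemma \ref{3.2}.
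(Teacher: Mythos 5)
Your proposal is correct and follows essentially the same route as the paper's proof: both establish the vanishing of all residues of $\reg_F(\beta_F(\alpha)) - 2\,r_3(2)_F(\alpha)$ at points of $S(\Cbb)$ via the compatibility diagram \eqref{dJdiagram} together with Lemma \ref{4.3} and the normalization $\{z\}_2 \mapsto iD(\sigma(z))$, then conclude from Lemma \ref{3.2} and the injectivity of the pairing against holomorphic $1$-forms that the extended class on $C(\Cbb)$ is trivial. The only cosmetic difference is that the paper phrases the final step with explicit representatives (writing the class as $\eta + dt$ with $t$ of logarithmic growth and deducing $\eta = ds$), whereas you argue directly in cohomology.
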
 
\begin{proof}
First notice that $\beta_F(\alpha) \in H^2_\Mcal(F, \Qbb(3))$ actually belongs to the subgroup   $H^2_\Mcal(Y, \Qbb(3))$ (see \cite[Theorem 5.4]{Bru22}). Then $\reg_F(\beta_F(\alpha))$ belongs to $H^1(Y(\Cbb), \Rbb(2))^+$. In particular, the integral $\int_\gamma \reg_F(\beta_F(\alpha))$ is well-defined for any loop $\gamma \in H_1(Y(\Cbb), \Zbb)$. Since $r_3(2)_F(\alpha)$ is represented by the form $\sum_j c_j \rho(f_j, g_j)$ which defines an element in $H^1(Y(\Cbb), \Rbb)^+$, the integral $\int_\gamma r_3(2)_F (\alpha)$ is also well-defined. 
   
In particular, we have $\reg_F(\beta_F(\alpha)) - 2 r_3(2)_F(\alpha) \in H^1(Y(\Cbb), \Rbb(2))^+$. We consider the Mayer-Vietoris sequence
\begin{equation}\label{seq}
\xymatrix{0 \ar[r] & H^1(C(\Cbb), \Rbb(2))^+ \ar[r] & H^1(Y(\Cbb), \Rbb(2))^+ \ar[rr]^{\quad \oplus (2 \pi i )^{-1} \Res_p} && \oplus_{p \in Z(\Cbb)} \ \Rbb(1),}
\end{equation}
where $\Res_p$ is the residue map defined in Definition \ref{resmap}. 
We are going to show that $\reg_F(\beta_F(\alpha)) - 2 r_3(2)_F(\alpha)$ extends to $H^1(C(\Cbb), \Rbb(2))^+$. Let $p \in Z(\Cbb)$ supported on a closed point $x \in Z^1$ with the embedding $\sigma : k(x) \hookrightarrow \Cbb$, i.e.,
\begin{equation*}
   \xymatrix{ \Spec \Cbb \ar@/^1pc/[rr]^{\id} \ar[r] \ar[rd]^p \ar[d]_\sigma & Z \times_\Qbb \Cbb \ar[r] \ar[d] & \Spec \Cbb \ar[d]\\
    \Spec k(x) \ar[r]_x & Z \ar[r] & \Spec \Qbb.}   
\end{equation*}
With the fixed signs of $\varphi_{(2)}^1$ and $\varphi_{(3)}^2$ as before, we have the following commutative diagram 
\begin{equation}\label{dig2}
\xymatrix{
H^2(\Gamma(F,3)) \ar[r]^{\beta_F} \ar[d]_{ 2\partial_x} & H^2_\Mcal(F, \Qbb(3))\ar[d]^{\Res^\Mcal_x} \ar[r]^{\reg_F} & H^1(F, \Rbb(2))^+ \ar[d]^{(2\pi i)^{-1} \Res_{p}} & \\
H^1(\Gamma(k(x), 2)) \ar[r]_{\beta_{(2)}^1} & H^1_\Mcal(k(x), \Qbb(2)) \ar[r]_{\qquad \reg_{k(x)}\ } & \Rbb(1),
}
\end{equation}
here $\reg_{k(x)}$ is the composition $H^1_\Mcal(k(x), \Qbb(2)) \xrightarrow{\sigma^*}  H^1_\Mcal(\Cbb, \Qbb(2)) \xrightarrow{\reg_\Cbb} H^1_\Dcal(\Cbb, \Rbb(2)) \simeq \Rbb(1) $ as mentioned in \eqref{bjg}. The commutativity of the right square follows from the compatibility of the Beilinson regulators and residues maps. We then have 
\begin{align*}
\Res_p(\reg_F(\beta_F(\alpha))) 
&= (4 \pi i ) \ \reg_{k(x)} (\beta_{(2)}^1(\partial_x (\alpha))) \\
&= (4\pi i)   (\reg_{k(x)} \circ \beta_{(2)}^1) (\sum_{j} c_j v_x(g_j)\{f_j(x)\}_2), 
\end{align*} 
where the last equality follows from the definition of Goncharov's residues map \eqref{Gonresmap}. As mentioned in \eqref{bjg}, the sign of $\varphi_{(2)}^1$ is chosen such that the lower composition map in the diagram \eqref{dig2} is induced by the map $\{z\}_2 \mapsto i D(\sigma(z))$. Therefore, we have
\begin{align*}
\Res_p(\reg_F(\beta_F(\alpha))) &=  (4 \pi i)i  \ \sum_j c_j v_x(g_j) D(\sigma(f_j(x)))\\
&= - 4 \pi   \sum_j c_j v_{\sigma(x)}(g_j) D(f_j(\sigma(x) )) \quad (\text{as } f_j, g_j \in k(C))\\
&= 2 \  \Res_p (r_3(2)_F(\alpha))\quad  (\text{by Lemma } \ref{4.3}).
\end{align*}
So $\Res_p(\reg_F(\beta_F(\alpha)) - 2 r_3(2)_F(\alpha)) = 0 \text{ for all } p \in Z(\Cbb),$
hence $\reg_F(\beta_F(\alpha)) - 2 r_3(2)_F(\alpha)$ extends to $H^1(C(\Cbb), \Rbb(2))^+$. Therefore,  the class $\reg_F(\beta_F(\alpha)) - 2 r_3(2)(\alpha)$ is represented by $\eta + dt$, where $\eta$ is a $F_\dR$-invariant closed differential 1-form on $C(\Cbb)$  and  $t$ is a logarithmic growth function on $Y(\Cbb)$.  Now let $\omega$ be a holomorphic 1-form on $C(\Cbb)$ such that $F_\dR(\omega) = \omega$. Since $t$ is a logarithmic growth function on $Y(\Cbb)$, we have 
    $$\int_{C(\Cbb)} dt \wedge \bar \omega = \int_{C(\Cbb)} d(t \bar \omega) = 0$$
by using Stokes' theorem (see the proof of \cite[Theorem 4.6]{dJ96}). We then have 
    \begin{equation*}
        \int_{C(\Cbb)} \eta \wedge \bar \omega  = \int_{C(\Cbb)} (\eta + dt) \wedge \bar \omega = \int_{C(\Cbb)}  (\reg_F(\beta_F(\alpha)) - 2 r_3(2)_F(\alpha)) \wedge \bar \omega =  0,
    \end{equation*}
    where the last equality is by Lemma \ref{3.2}. Since $\omega$ is an arbitrary $F_\dR$-invariant holomorphic 1-form and such forms span a real vector space dual to $H^1(C(\Cbb), \Rbb(2))^+$ (\cite[Remark 3.1]{dJ00}), we obtain that $\eta = ds$ for some function $s$ on $C(\Cbb)$. So 
    $\reg_F(\beta_F(\alpha)) - 2 r_3(2)_F(\alpha))$ is represented by $d(s+t)$ for some logarithmic growth function  $s+t$ on  $Y(\Cbb)$. Hence
    \begin{equation*}
        \int_{\gamma} \reg_F(\beta_F(\alpha)) = 2 \int_\gamma r_3(2)_F(\alpha) \qquad \text{for any loop }\gamma \in H_1(Y(\Cbb), \Zbb).
    \end{equation*}
\end{proof}

\section{Main result}\label{main}
In Section \ref{Deligne}, we construct an element in Deligne cohomology and  in Section \ref{relate1}, we  connect it to the Mahler measure. In Section  \ref{motivic}, we construct an element in $K_4^{(3)}$ of a curve such that its regulator   is related to the Deligne cohomology class constructed in Section \ref{Deligne}. Finally, we prove Theorem \ref{0} in Section \ref{main}.

\subsection{Constructing an element in Deligne cohomology}\label{Deligne}
Let $P(x ,y, z) \in \Qbb[x, y, z]$ be an irreducible polynomial. We denote by $V_P$ the zero locus of $P$ in $(\Cbb^\times)^3$ and $V_P^\reg$ the smooth part of $V_P$. For $f, g, h \in \Cbb(V_P^\reg)^\times$, we recall the differential form mentioned in (\ref{kk})
\begin{equation}\label{84}
\begin{aligned}
    \eta(f, g, h) &= \log|f| \left(\dfrac{1}{3} d \log|g|\wedge d \log|h| - d \arg(g) \wedge d \arg(h)\right)\\
    &\quad+ \log|g| \left(\dfrac{1}{3} d \log|h|\wedge d \log|f| - d \arg(h) \wedge d \arg(f)\right)\\
    &\quad+ \log|h| \left(\dfrac{1}{3} d \log|f|\wedge d \log|g| - d \arg(f) \wedge d \arg(g)\right).
    \end{aligned}
\end{equation}
The form is bilinear and antisymmetric in $f, g, h$. It defines on $V_P^\reg \setminus S_{f, g, h}$ where $S_{f, g, h}$ is the set of zeros and poles of $f, g$ and $h$. Moreover, $\eta(f, g, h)$ is a closed form on $V_P^\reg \setminus S_{f, g, h}$ since
$$d \eta(f, g, h) = \mathrm{Re}\left(\dfrac{d f}{f} \wedge \dfrac{d h}{h} \wedge\dfrac{d g}{g}\right),$$
which is zero in $V_P^\reg\setminus S_{f, g, h}$. 
\begin{lm}
The differential form $\eta(x,y,z)$ defines an element in the Deligne cohomology $H^{3}_{\Dcal}(\Gbb^{3}_m,\Rbb(3)).$ Moreover, it represents the class $\reg_{\Gbb^3_m} (\{x, y, z\})$, where $\reg_{\Gbb^3_m} : H^3_\Mcal (\Gbb_m^3, \Qbb(3)) \to H^3_\Dcal(\Gbb^3_m, \Rbb(3))$ is the Beilinson regulator map and $\{x, y, z\} \in H^3_\Mcal (\Gbb_m^3, \Qbb(3))$ is the Milnor symbol.
\end{lm}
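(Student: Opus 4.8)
The plan is to establish two things in turn: first, that $\eta(x,y,z)$ is a cocycle in Burgos' complex $E_3(\Gbb^3_m)$ computing $H^3_\Dcal(\Gbb^3_m,\Rbb(3))$, so that it defines a cohomology class; and second, that this class agrees with the iterated cup product $[\log|x|]\cup[\log|y|]\cup[\log|z|]$, which by the compatibility of the Beilinson regulator with cup products (Section~\ref{Beireg}) is exactly $\reg_{\Gbb^3_m}(\{x,y,z\})$.

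For the first point I would work with the good compactification $(\Pbb^1)^3$ of $\Gbb^3_m$, whose boundary divisor $D$ is a normal crossings divisor locally of the form $z_1\cdots z_m=0$. Each of $\log|x_i|$, $d\log|x_i|=\tfrac12\bigl(\tfrac{dx_i}{x_i}+\tfrac{d\bar x_i}{\bar x_i}\bigr)$ and $d\arg x_i=\tfrac{1}{2i}\bigl(\tfrac{dx_i}{x_i}-\tfrac{d\bar x_i}{\bar x_i}\bigr)$ belongs to the algebra of Definition~\ref{logaalongD}, so $\eta(x,y,z)$ is a real smooth $2$-form with logarithmic singularities along $D$; moreover it is $F_\dR$-invariant (the factors $d\arg x_i$ change sign in pairs), so after passing to the $+$-part it gives an element of $E_3(\Gbb^3_m)^3$, where the Hodge-type condition of Definition~\ref{54} is automatic in degree $2$. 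Since here $n=3<2j-1=5$, the Burgos differential is $d^3\omega=-\mathrm{pr}_3(d\omega)$, with $\mathrm{pr}_3$ killing the $(3,0)$- and $(0,3)$-components of a $3$-form; using the identity $d\eta(x,y,z)=\mathrm{Re}\bigl(\tfrac{dx}{x}\wedge\tfrac{dz}{z}\wedge\tfrac{dy}{y}\bigr)$ recalled above, which is (up to sign) the real part of a holomorphic $3$-form and hence of pure type $(3,0)\oplus(0,3)$, we get $\mathrm{pr}_3(d\eta)=0$, so $\eta(x,y,z)$ is a cocycle. This is where the coefficient $\tfrac13$ in \eqref{84} enters: it is exactly what forces the $(2,1)$- and $(1,2)$-parts of $d\eta$ to vanish.

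For the second point I would compute $[\log|x|]\cup[\log|y|]\cup[\log|z|]$ one step at a time with \eqref{65}. Each $[\log|f|]$ lies in $H^1_\Dcal(\Gbb^3_m,\Rbb(1))$, represented by the pluriharmonic function $\log|f|$, and $r_1(\log|f|)=\partial\log|f|-\bar\partial\log|f|=\tfrac12\bigl(\tfrac{df}{f}-\tfrac{d\bar f}{\bar f}\bigr)=i\,d\arg f$. Plugging into \eqref{65} gives a representative $i(\log|x|\,d\arg y-\log|y|\,d\arg x)$ of $[\log|x|]\cup[\log|y|]$; cupping this class (of degree $2$ and weight $2$) with $[\log|z|]$ again via \eqref{65} calls for $r_2$ of the previous representative, which is obtained by applying $\partial$ and $\bar\partial$ to its $(1,0)$- and $(0,1)$-components and equals $d\log|x|\wedge d\log|y|-d\arg x\wedge d\arg y$. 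Assembling the two terms of \eqref{65} and simplifying, one finds that the resulting $2$-form equals $\eta(x,y,z)-d\omega_0$, where $\omega_0=\tfrac13\log|x|\log|y|\,d\log|z|+\tfrac23\log|y|\log|z|\,d\log|x|$ is a real $1$-form with logarithmic singularities; since $d\omega_0$ coincides with the Burgos coboundary $-d^2\omega_0$, the two forms are cohomologous in $E_3(\Gbb^3_m)$, and therefore $[\eta(x,y,z)]=[\log|x|]\cup[\log|y|]\cup[\log|z|]=\reg_{\Gbb^3_m}(\{x,y,z\})$.

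The main obstacle is the bookkeeping in the second point: keeping track of the Hodge-type projections inside the operators $r_\ell$ and of the $2\pi i$-twists through the iterated cup product, and checking that the asymmetric intermediate $1$-forms recombine into Goncharov's symmetric form up to the explicit coboundary $d\omega_0$ (in particular that the $d\log\wedge d\log$ part, which comes out with coefficient $1$ rather than $\tfrac13$, is corrected by exactly that coboundary). Should one prefer to avoid the full computation, a lighter alternative is to observe that $\eta(x,y,z)$ and any cocycle representative of the triple product define classes in the same group $H^3_\Dcal(\Gbb^3_m,\Rbb(3))$ and to identify them after passing to de Rham cohomology (Remark~\ref{1.5}) together with a single period over the compact torus $\Tbb^3\subset\Gbb^3_m(\Cbb)$; in any case this identification goes back to Deninger \cite{Den97} and Goncharov \cite{Gon98}, and is included here for completeness.
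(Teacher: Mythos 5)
Your proposal is correct and follows essentially the same route as the paper: identify the regulator of the Milnor symbol with the triple cup product $\log|x|\cup\log|y|\cup\log|z|$, compute it via formula \eqref{65}, and check that its difference with $\eta(x,y,z)$ is the exact form $d\bigl(\tfrac13\log|x|\log|y|\,d\log|z|+\tfrac23\log|y|\log|z|\,d\log|x|\bigr)$, which agrees with the paper's primitive $-\tfrac13\log|x|\log|z|\,d\log|y|+\tfrac13\log|y|\log|z|\,d\log|x|$ up to a closed term. Your verification that $\eta$ is a cocycle in Burgos' complex (via $\mathrm{pr}_3$ killing the $(3,0)\oplus(0,3)$ part of $d\eta$) is in fact more detailed than the paper's one-line assertion.
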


\begin{proof}
By definition, $\eta(x, y, z) \in E^2_{\log, \Rbb}(\Gbb_m^3)$, and defines an element in $H^3_\Dcal(\Gbb^3_m, \Rbb(3))$. By an observation in Section  \ref{Beireg},  $\reg_{\Gbb^3_m} (\{x, y, z\}) $ is represented by the cup product $\log |x| \cup \log |y| \cup \log |z|$ in Deligne cohomology. By cup product's formula (\ref{65}), we have
\begin{equation*}\label{66}
    \begin{aligned}
     \log |x| \cup \log |y| \cup \log |z|
     &= (\log |x| \cup \log |y|) \cup \log |z|\\
     &= (-1)^2 r_2 (\log|x| \cup \log |y|) \log |z| + (\log|x| \cup \log |y|) \ r_1(\log|z|)\\
     &= \left(\partial \left(\dfrac{1}{2}\log |x| \dfrac{dy}{y} - \dfrac{1}{2} \log |y| \dfrac{d x}{x}\right)- \bar \partial \left(\dfrac{1}{2}\log|y| \dfrac{d \bar x}{\bar x} - \dfrac{1}{2}\log|x|\dfrac{d \bar y}{\bar y}\right)\right)\log |z|\\
     &\quad + i \cdot (\log|x| d \arg y - \log |y| d \arg (x))\wedge (\partial \log |z| - \bar \partial \log |z|)\\
     &=\left(\dfrac{1}{2} \dfrac{dx}{x} \wedge \dfrac{dy}{y} + \dfrac{1}{2} \dfrac{d\bar x}{\bar x} \wedge \dfrac{d\bar y}{\bar y}\right) \log |z|  - (\log|x| d\arg y - \log |y| d\arg x) \wedge d\arg z\\
     &= \log |z| \left( d\log|x| \wedge d\log |y| - d\arg (x) \wedge d\arg y \right) \\
     &\quad - \log |y| \  d\arg (z) \wedge d\arg x - \log |x|\  d\arg (y) \wedge d\arg z.
    \end{aligned}
\end{equation*}
Therefore, 
\begin{equation*}
    \begin{aligned}
     &\eta(x, y, z) - \log |x| \cup \log |y| \cup \log |z| \\
     &= \dfrac{1}{3} \log |x| d \log |y| \wedge d \log |z| + \dfrac{1}{3} \log |y| d \log |z| \wedge d \log |x| -\dfrac{2}{3} \log |z| d \log |x| \wedge d \log |y|\\
     &= -\dfrac{1}{3} d (\log |x| \log |z| \ d \log |y| ) + \dfrac{1}{3} d (\log |y| \log |z| \ d \log |x|),
    \end{aligned}
\end{equation*}
which is an exact form, hence $\reg_{\Gbb^3_m} (\{x, y, z\})$ is represented by $\eta(x, y, z)$.
\end{proof}

Consequently, pulling back $\eta(x, y, z)$ by the embedding  $V_P^\reg \xhookrightarrow{i} \Gbb^3_m$, We have $\eta(x, y, z)_{|_{V_P^\reg}}$ is a representative of $\reg_{V_P^\reg} (\{x, y, z\})$ in $H^3_\Dcal(V_P^\reg, \Rbb(3)).$ We come to the definition of \textit{exact polynomials}.

\begin{definition}[\textbf{Exact polynomial}]\label{a}
A polynomial $P(x, y, z)$ is called \textit{exact} if $\reg_{V_P^\reg}(\{x, y, z\})$ is trivial, i.e., $\eta(x, y, z)$ is an exact form on $V_P^\reg$.
\end{definition}

\begin{rmk} \label{6.3}
If $P$ satisfies Lal\'{i}n's condition (see Theorem \ref{lalinthm}(iii)):
\begin{equation}\label{lll}
    x \wedge y \wedge z = \sum_j f_j \wedge (1-f_j)\wedge g_j \quad \text{in } \bigwedge^3 \Qbb(V_P)^\times_\Qbb
\end{equation}
for some functions $f_j \in \Qbb(V_P)^\times \setminus \{1\}$ and $g_j \in \Qbb(V_P)^\times$, then $P$ is exact because $ \eta(x, y, z)|_{V_P^\reg} = \sum_j \eta(f_j, 1-f_j, g_j) = \sum_j d\rho(f_j, g_j) = d(\sum_j \rho(f_j, g_j))$, where $\rho(f, g)$ is the differential form defined in \eqref{bb}. In particular, the polynomials of the form $A(x) + B(x) y + C(x) z$, where $A(x), B(x), C(x)$ are products of cyclotomic polynomials, are exact. Indeed, we have
\begin{equation}\label{104}
    \begin{aligned}
        x \wedge y \wedge z &= x \wedge y \wedge \dfrac{A(x) + B(x)y}{C(x)}\\
        &= x \wedge y \wedge \left(\dfrac{A(x)}{C(x)} \cdot  \dfrac{A(x) + B(x) y}{A(x)}\right)\\
        &= x \wedge y \wedge \dfrac{A(x)}{C(x)} + x\wedge y \wedge \left(1 + \dfrac{B(x) y}{A(x)}\right)\\
        &= x \wedge y \wedge \dfrac{A(x)}{C(x)} + x \wedge \dfrac{B(x) y}{A(x)} \wedge \left(1 + \dfrac{B(x) y}{A(x)}\right) - x \wedge \dfrac{B(x)}{A(x)} \wedge \left(1 + \dfrac{B(x) y}{A(x)}\right).
    \end{aligned}
\end{equation}
For cyclotomic polynomials $\Phi(x)$, we have
\begin{equation*}
    \begin{aligned}
        x \wedge y \wedge \Phi_n(x) &= x \wedge y \wedge \dfrac{x^n - 1}{\prod_{d|n, d\neq n}\Phi_d(x)}\\
        &= x \wedge y \wedge (x^n - 1) - 
        \sum_{d|n, d \neq n} x \wedge y \wedge \Phi_d(x)\\
        &= -\dfrac{1}{n}x^n \wedge (1-x^n) \wedge y - \sum_{d|n, d\neq n} x \wedge y \wedge \Phi_d.
    \end{aligned}
\end{equation*}
For $n=1$, $x \wedge y \wedge (x+1) = -x \wedge (1+x) \wedge y$. So we get (\ref{lll}) by induction on $n$. 
\end{rmk}

From now on, let us assume our polynomial $P$ satisfies the condition 
\eqref{lll}. We consider the involution
\begin{equation}\label{tau1}
    \tau : \Gbb^3_m \to \Gbb^3_m, (x, y, z) \mapsto (1/x, 1/y, 1/z),
\end{equation}
which maps $V_P$ to $V_{P^*}$, where $P^* (x, y, z) := \bar P(1/x, 1/y, 1/z) =  P(1/x, 1/y, 1/z)$. Let $W_P$ be the curve defined by
\begin{equation}
    \begin{cases} P(x, y, z) = 0,\\
    P(1/x, 1/y, 1/z) = 0.
    \end{cases}
\end{equation}
We call $W_P$ the \textit{Maillot variety}. The restriction $\tau_{|_{W_P}} : W_P \to W_P$ is an automorphism. We view $W_P$ as a curve over $\Qbb$. Then let $C$ be the normalization of $W_P$. The condition \eqref{lll} implies that 
\begin{equation}\label{llll}
    x \wedge y \wedge z = \sum_j f_j \wedge (1-f_j) \wedge g_j \quad \text{ in } \bigwedge^3 \Qbb(C)^\times_\Qbb
\end{equation}
for some functions $f_j \in \Qbb(C)^\times \setminus \{1\}$ and $g_j \in \Qbb(C)^\times$.

\begin{definition}\label{xiandxi*} Let $F = \Qbb(C)$ be the function field of $C$. We denote by 
\begin{equation}\label{76}
    \xi := \sum_j \{f_j\}_2 \otimes g_j, \qquad \xi^* := \sum_j \{f_j \circ \tau\}_2 \otimes (g_j \circ \tau), \qquad \lambda := \xi + \xi^*,
\end{equation}
which are elements in $B_2(F)  \otimes F^\times_\Qbb$. Let us consider the following closed subschemes of $V_P$ and $V_{P^*}$, respectively 
\begin{equation}
    Z_1 = \{\text{zeros and poles of  } f_j, 1-f_j, g_j \text{ on } V_P \text{ for all } j\},
\end{equation}
\begin{equation}
    Z_2 = \{\text{zeros and poles of  } f_j \circ \tau, 1-f_j\circ \tau, g_j \circ \tau \text{ on } V_{P^*} \text{ for all } j\}.
\end{equation}
We define the following differential 1-forms on $V_P^\reg \setminus Z_1$ and $V_{P^*}^\reg\setminus Z_2$, respectively
\begin{equation}
    \rho(\xi) := \sum_j \rho(f_j, g_j), \qquad \rho(\xi^*) := \sum_j \rho(f_j \circ \tau, g_j \circ \tau),
\end{equation}
where $\rho(f, g)$ is mentioned in $\eqref{bb}$. Denote by $Z$ the following closed subscheme of $C$ 
\begin{equation}
    Z = \{\text{zeros and poles of } f_j, 1-f_j, g_j, f_j \circ \tau, 1-f_j \circ \tau, g_j \circ \tau \text{ on } C \text{ for all } j\}, 
\end{equation}
and set  $Y = \iota(W_P^\reg) \setminus Z$, where $\iota : W_P^\reg \hookrightarrow C$ is the canonical embedding. Using the canonical embeddings of $Y(\Cbb)$ into $V_P$ and $V_{P^*}$, we define the following differential 1-form on $Y(\Cbb)$
\begin{equation}
   \rho(\lambda) =  \rho(\xi)|_{Y(\Cbb)} + \rho(\xi^*)|_{Y(\Cbb)}.
\end{equation}
\end{definition}

\begin{lm}\label{o}  The element $\lambda$  defines a class in  $H^2 (\Gamma(F, 3))$. Moreover, $\lambda$ defines an element in $H^2(\Gamma(Y, 3))$.
\end{lm}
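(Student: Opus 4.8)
The plan is to check two things: that $\lambda$ is a cocycle in the complex $\Gamma(F,3)$, i.e.\ that $\alpha_3(2)(\lambda)=0$ in $\bigwedge^3 F^\times_\Qbb$, and that its residue at every closed point of $Y$ vanishes; by the localization exact sequence for $\Gamma(Y,3)$ (the analogue of \eqref{cc} with $Y^1$ in place of $C^1$, where $\Gamma(Y,3)$ is the mapping cone of $\bigoplus_{x\in Y^1}\partial_x$), the second condition will place the class of $\lambda$ in the subgroup $H^2(\Gamma(Y,3))\hookrightarrow H^2(\Gamma(F,3))$.

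For the cocycle condition, first I would record that $\tau$ restricts to an involution of $W_P$ and hence, by functoriality of normalization, lifts to an involution of $C$; it therefore induces a $\Qbb$-algebra automorphism $\tau^*$ of $F=\Qbb(C)$ with $\tau^*(h)=h\circ\tau$, in particular $\tau^*(x)=1/x$, $\tau^*(y)=1/y$, $\tau^*(z)=1/z$. Extending $\tau^*$ to $\bigwedge^3 F^\times_\Qbb$ and applying it to the defining relation \eqref{llll}, together with the identity $a^{-1}\wedge b\wedge c=-\,a\wedge b\wedge c$ used three times, yields $\sum_j (f_j\circ\tau)\wedge(1-f_j\circ\tau)\wedge(g_j\circ\tau)=\tfrac1x\wedge\tfrac1y\wedge\tfrac1z=-(x\wedge y\wedge z)$. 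Since $\alpha_3(2)(\{h\}_2\otimes g)=(1-h)\wedge h\wedge g=-\,h\wedge(1-h)\wedge g$, relation \eqref{llll} gives $\alpha_3(2)(\xi)=-(x\wedge y\wedge z)$, while the computation above gives $\alpha_3(2)(\xi^*)=+(x\wedge y\wedge z)$; adding, $\alpha_3(2)(\lambda)=0$. Note that $\xi$ and $\xi^*$ are not separately cocycles: the cancellation is produced exactly by the sign $(-1)^3$ coming from inverting the three coordinates, so it is essential to pass to the symmetrization $\lambda=\xi+\xi^*$.

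For the residues, the exact sequence mentioned above reduces the claim to showing $\partial_x(\lambda)=0$ in $B_2(k(x))$ for every $x\in Y^1$. By the formula \eqref{Gonresmap} for $\partial_x^{(2)}$ we have $\partial_x(\lambda)=\sum_j v_x(g_j)\{f_j(x)\}_2+\sum_j v_x(g_j\circ\tau)\{f_j\circ\tau(x)\}_2$, so $\partial_x(\lambda)$ only depends on the orders of vanishing of the functions $g_j$ and $g_j\circ\tau$ at $x$. By construction \eqref{Z}, the closed set $Z$ contains every zero and pole of each $g_j$ and each $g_j\circ\tau$, so all of these orders are $0$ at a point $x\in Y^1=(C\setminus Z)^1$, and hence $\partial_x(\lambda)=0$. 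I do not anticipate a genuine obstacle here; the only places demanding care are the sign conventions (the minus sign in $\alpha_3(2)$, and the convention $h^{-1}\leftrightarrow -h$ in $F^\times_\Qbb$) and the small point that $\tau$ really does lift to $C$, which is immediate from functoriality of normalization since $\tau$ preserves $W_P$.
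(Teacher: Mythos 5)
Your proposal is correct and follows essentially the same route as the paper: the cocycle condition $\alpha_3(2)(\lambda)=0$ is obtained from the same symmetrization argument, applying $\tau^*$ to the relation \eqref{llll} and using that inverting the three coordinates contributes a sign $(-1)^3$, and the vanishing of the residues on $Y$ is deduced exactly as in the paper from the fact that $Z$ contains all zeros and poles of the $g_j$ and $g_j\circ\tau$. The only cosmetic difference is that the paper computes $\alpha_3(2)(\xi^*)$ by pulling back $-x\wedge y\wedge z$ along $\tau^*$ directly, whereas you transport the whole decomposition first; these are the same calculation.
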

\begin{proof}
We recall the polylogarithmic complex of Goncharov
\begin{center}
\begin{tikzcd}[cramped,row sep=small]
\Gamma(F, 3): \quad  B_3(F) \arrow[r] & B_2(F) \otimes F^\times_\Qbb  \arrow[r, "\alpha_3(2)"] & \bigwedge^3 F^\times_\Qbb \\ 
& \{f\}_2 \otimes g \arrow[r, mapsto] & (1-f) \wedge f \wedge g.
\end{tikzcd}
\end{center}
We have
\begin{equation*}
    \alpha_3(2)(\xi) = \sum_j \alpha_3(2) (\{f_j\}_2 \otimes g_j) = \sum_j (1-f_j) \wedge f_j \wedge g_j = -x\wedge y \wedge z,
    \end{equation*}
and 
\begin{equation*}
    \begin{aligned}
    \alpha_3(2)(\xi^*) = \sum_j \alpha_3(2) \left(\{f_j\circ \tau\}_2 \otimes (g_j \circ \tau)\right)
    &= \sum_j (1-f\circ \tau) \wedge (f_j \circ \tau) \wedge (g_j \circ \tau)\\
    &= \tau^* \left(\sum_j  (1-f_j) \wedge f_j \wedge g_j\right)\\
    &= \tau^* (- x \wedge y \wedge z)\\
    &= -\dfrac{1}{x}\wedge \dfrac{1}{y} \wedge \dfrac{1}{z}\\
    &=  x \wedge y \wedge z,
    \end{aligned}
\end{equation*}
so $\alpha_3(2) (\lambda) = \alpha_3(2)(\xi) + \alpha_3(2)(\xi^*) = 0$. Then $\lambda$ defines a class in $H^2(\Gamma(F, 3))$.  Now we consider the following exact sequence (see Section \ref{Gonres}):
\begin{equation}
    0 \to H^2(\Gamma(Y, 3))\to H^2(\Gamma(F, 3)) \xrightarrow{\oplus \partial_p} \bigoplus_{p \in Y^1} H^1(\Gamma(\Qbb(p), 2)),
\end{equation}
where $Y^1$ is the set of closed points of $Y$. The residue of $\lambda$ at $p \in Y^1$ is given by
\begin{equation}\label{resbloch}
   \partial_p (\lambda)  = \sum_j v_p(g_j) \{f_j(p)\}_2 +  v_p(g_j \circ \tau) \{f_j \circ \tau(p)\}_2 \in H^1(\Gamma(\Qbb(p), 2),
\end{equation}
which is trivial for every point $p \notin S$, where $S$ is the following closed subscheme of $C$
\begin{equation}\label{S}
    S  = \{\text{zeros and poles of } g_j, g_j \circ \tau \text{ on } C \text{ for all } j\}.
\end{equation}
We  have $\partial_p(\lambda) = 0$ for all $p\in Y^1$, hence  $\lambda$ defines an element in $H^2(\Gamma(Y, 3))$. 
\end{proof}

\begin{lm}\label{11}
The differential 1-form  $\rho(\lambda)$ defines an element in $H^2_\Dcal(Y_\Rbb, \Rbb(3)) \simeq H^1 (Y(\Cbb), \Rbb(2))^+$. For any point $p \in C(\Cbb)$, the residue of $\rho(\lambda)$ at $p$ is given by
\begin{equation}\label{16} 
\mathrm{Res}_p (\rho(\lambda)) = -2\pi \left(\sum_j v_p(g_j) D(f_j(p)) + v_p(g_j \circ \tau) D(f_j \circ \tau)(p)\right),
\end{equation}
where $D$ is the Bloch-Wigner dilogarithm function \eqref{dilog}.
\end{lm}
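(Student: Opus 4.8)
The plan is to deduce both assertions from Lemma~\ref{o}, which already places $\lambda$ in $H^2(\Gamma(Y,3))$, and from Lemma~\ref{4.3}, which computes the residues of Goncharov's form $\sum_j c_j\,\rho(f_j,g_j)$. For the first assertion I would check that $\rho(\lambda)=\rho(\xi)+\rho(\xi^*)$ is a smooth real $1$-form on $Y(\Cbb)$: by the definition \eqref{Z} of $Z$, the open set $Y=C\setminus Z$ avoids all zeros and poles of $f_j,1-f_j,g_j,f_j\circ\tau,1-f_j\circ\tau,g_j\circ\tau$ and is contained in $\iota(W_P^\reg)$, so each summand $\rho(f_j,g_j)$ and $\rho(f_j\circ\tau,g_j\circ\tau)$ is smooth there. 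Next, $\rho(\lambda)$ is closed: using $d\rho(f,g)=\eta(f,1-f,g)$ from Section~\ref{Gonregulator} together with the fact that $\eta$ is multilinear and alternating, hence factors through a map on $\bigwedge^3\Qbb(C)^\times_\Qbb$, relation \eqref{llll} gives $d\rho(\xi)=\sum_j\eta(f_j,1-f_j,g_j)=\eta(x,y,z)|_C$, while applying $\tau^*$ (and using $\tau^*\rho(f,g)=\rho(f\circ\tau,g\circ\tau)$, $\tau^*x=x^{-1}$, etc.) gives $d\rho(\xi^*)=\tau^*\eta(x,y,z)|_C=\eta(x^{-1},y^{-1},z^{-1})|_C=-\eta(x,y,z)|_C$, the last step by multilinearity of $\eta$; hence $d\rho(\lambda)=0$ on $Y(\Cbb)$. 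Finally, since $P\in\Qbb[x,y,z]$, all the functions $f_j,g_j,f_j\circ\tau,g_j\circ\tau$ lie in $\Qbb(C)^\times$, and a short computation using $D(\bar z)=-D(z)$ and $c^*(\arg h)=-\arg h$ for $h\in\Qbb(C)^\times$ shows $F_\dR(\rho(f,g))=c^*\rho(f,g)=\rho(f,g)$, so $\rho(\lambda)$ is $F_\dR$-invariant and defines a class in $H^1(Y(\Cbb),\Rbb(2))^+\simeq H^2_\Dcal(Y_\Rbb,\Rbb(3))$ by Remark~\ref{1.5}; conceptually this class is $r_3(2)_F(\lambda)$ of \eqref{r32F}, which lies in the image of $H^2(\Gamma(Y,3))$ by Lemma~\ref{o} and the compatibility of Goncharov's regulator with residues.

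For the residue formula I would apply Lemma~\ref{4.3} twice. With $\alpha=\xi=\sum_j\{f_j\}_2\otimes g_j$ (all $c_j=1$) it gives $\Res_p(\rho(\xi))=-2\pi\sum_j v_p(g_j)D(f_j(p))$ for every $p\in C(\Cbb)$, and with $\alpha=\xi^*=\sum_j\{f_j\circ\tau\}_2\otimes(g_j\circ\tau)$ it gives $\Res_p(\rho(\xi^*))=-2\pi\sum_j v_p(g_j\circ\tau)D(f_j\circ\tau(p))$; since $\Res_p$ is additive, adding the two identities yields
\[
\Res_p(\rho(\lambda))=-2\pi\Big(\sum_j v_p(g_j)D(f_j(p))+v_p(g_j\circ\tau)D(f_j\circ\tau(p))\Big),
\]
as claimed.

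I expect the main obstacle to be geometric bookkeeping rather than any new computation: one must make sure the functions $f_j,g_j$ — a priori functions on $V_P$, transported to $C$ through the normalization of $W_P\subset V_P$ — are regular on $Y$, which is why the locus $C\setminus\iota(W_P^\reg)$ was already absorbed into $Z$; granting that, everything reduces to Lemmas~\ref{o} and~\ref{4.3}. It is also worth noting that the cancellation $d\rho(\xi^*)=-d\rho(\xi)$ relies on the sign $(-1)^3=-1$ produced by $(x,y,z)\mapsto(x^{-1},y^{-1},z^{-1})$ inside the trilinear form $\eta$, which is precisely the reason for introducing the twisted summand $\xi^*$.
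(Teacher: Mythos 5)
Your proposal is correct and follows essentially the same route as the paper: the paper likewise establishes the first assertion by noting that $\rho(\lambda)$ represents $r_3(2)_Y(\lambda)$ (or, as you carry out in detail, by checking directly that it is closed and $F_\dR$-invariant, the closedness coming from the sign $\tau^*\eta(x,y,z)=-\eta(x,y,z)$), and deduces the residue formula directly from Lemma~\ref{4.3}. The only cosmetic difference is that you invoke Lemma~\ref{4.3} separately for $\xi$ and $\xi^*$, which individually are not cocycles, but this is harmless since the residue computation in that lemma's proof is purely local and additive, so it applies term by term.
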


\begin{proof}
The first statement follows from the fact that $\rho(\lambda)$ represents $r_3(2)_Y (\lambda)$, where
\[r_3(2)_Y : H^2
(\Gamma(Y, 3)) \to H^2_\Dcal(Y, R(3)) \simeq H^1(Y (\Cbb), R(2))^+\]
is Goncharov’s regulator map \eqref{Gonmap}, or by checking directly that $\rho(\lambda)$ is closed and fixed under the action
of the involution $F_\dR$. The formula \eqref{16} follows directly from Lemma \ref{4.3}.
\end{proof}

\begin{rmk}\label{4.7} 
If all the residues $u_p:= \partial_p (\lambda)$ are trivial for all $p \in S$ (see \eqref{S}), then $\lambda$ defines a unique class $\lambda_C$ in $H^2(\Gamma(C, 3))$ and $\rho(\lambda)$ represents the class $r_3(2)_C(\lambda_C) \in H^2_\Dcal(C, \Qbb(3)) \simeq H^1(C(\Cbb), \Rbb(2))^+$.
\end{rmk}

\subsection{Relating the Mahler measure to the Deligne cohomology}\label{relate1}  We retain the notation from the previous section. In this section, we connect $\rho(\lambda)$ to the Mahler measure of $P$. Recall that the Deninger chain associated to $P$ is defined by
\begin{equation}\label{24}
    \Gamma = \{(x, y, z) \in (\Cbb^\times)^3 : P(x, y, z)=0, |x|=|y|=1, |z|\ge 1\}.
\end{equation}
Its orientation is induced from $\Tbb^2$: for each $(x_0, y_0)\in \Tbb^2$, there are finitely many $z \in \Cbb$ such that $|z| \ge 1$ and $P(x_0, y_0, z) = 0$, then by letting $(x_0, y_0)$ runs on the torus $\Tbb^2_{(x, y)}$ along the usual orientation, we get the orientation of $\Gamma$.  Its boundary is given by 
$$\partial\Gamma = \{(x, y, z) \in (\Cbb^\times)^3 : P(x, y, z) = 0, |x| = |y| = |z| = 1\}.$$ 
Deninger \cite[Proposition 3.3]{Den97} showed that if  $\Gamma$ is contained in $V_P^\mathrm{reg}$, then we get the following formula 
\begin{equation}\label{4.2.2}
    \m (P) = \m (\tilde P) - \dfrac{1}{4\pi^2}\int_{\Gamma} \eta(x, y, z),
\end{equation}
where $\tilde P(x, y)$ is the leading coefficient of $P(x, y , z)$ considered as a polynomial in $z$. If furthermore, $\partial \Gamma = \emptyset$, then $[\Gamma] \in H_2(V_P^\reg, 
\Zbb)$ and the Mahler measure is written as a pairing in Deligne cohomology
\begin{equation*}
    \m(P) =\m(\tilde{P}) - \dfrac{1}{4\pi^2} \langle [\Gamma], \reg_{ V_P^\reg} (\{x, y, z)\}\rangle_{V_P^\reg}.
\end{equation*}

Since $P(x, y, z)$ has rational coefficients, we can write 
$$\partial \Gamma = \{P(x, y, z) = P(1/x, 1/y, 1/z) =0\} \cap \{|x|=|y|= |z|= 1\},$$
which is contained in $W_P$, and may contain some singularities of $W_P$. 
We have the following lemma.

\begin{lm}\label{90} We assume that
\begin{equation}\label{lalin'}
    x \wedge y \wedge z = \sum_j f_j \wedge (1-f_j)\wedge g_j \quad \text{in } \bigwedge^3 \Qbb(V_P)^\times_\Qbb.
\end{equation}
Suppose that $\Gamma$ is contained in $V_P^\reg$ and that $\partial \Gamma$ is contained in $Y(\Cbb)$ (see Definition \ref{xiandxi*}). Then $\partial \Gamma$ defines an element in the singular homology group $H_1(Y(\Cbb), \Zbb)^+$, where $``+"$ denotes the invariant part by the complex conjugation, and we can write the Mahler measure as a pairing in Deligne cohomology of $Y_\Rbb$
\begin{equation}\label{4.9}
    \m (P) = \m (\tilde P) - \dfrac{1}{8\pi^2} \left<[\partial \Gamma], [\rho(\lambda)]\right>_{Y},
\end{equation}
where the pairing is given by
\begin{equation}
    \left<\cdot, \cdot\right>_Y : H_1(Y(\Cbb), \Zbb)^+ \times H^{1}(Y(\Cbb), \Rbb(2))^+ \to \Rbb(2).
\end{equation}
\end{lm}

\begin{proof}
Since $\Gamma \subset V_P^\reg$ and $\partial \Gamma \subset Y(\Cbb)$,  $\partial \Gamma$ defines an element in $H_1(Y(\Cbb), \Zbb)$ by considering the following sequence
\begin{center}
\begin{tikzcd}[cramped,row sep=small]
 H_2(V_P^\reg, \partial \Gamma, \Zbb) \arrow[r] & H_1(\partial \Gamma, \Zbb) \arrow[r] &  H_1(Y(\Cbb), \Zbb) \\
 {[\Gamma]} \arrow[r, mapsto, shorten=5mm] & {[\partial \Gamma]} \arrow[r, mapsto, shorten=5mm] & {[\partial \Gamma]}.
\end{tikzcd}
\end{center}
Now we show that $\partial \Gamma$ is invariant under the complex conjugation. Notice that the action of the complex conjugation on $\partial \Gamma$ is the same as the action of the involution $\tau$ \eqref{tau1} on $\partial \Gamma$ because $\bar x = x^{-1}$ for $x \in \Tbb^1$. So it suffices to show that $\tau$ fixes $\partial \Gamma$. Clearly, $\tau(\partial \Gamma) = \partial \Gamma$ as sets. We claim that $\tau$ preserves the orientation of $\partial \Gamma$. Notice that the orientation of $\partial \Gamma$ is induced from the orientation of $\Gamma$, and the orientation of $\Gamma$ comes from the orientation of $\Tbb^2$. We have
\begin{equation}\label{T2}
    \tau|_{\Tbb^2} : \Tbb^2 \to \Tbb^2, (x, y) \mapsto (1/x, 1/y)
\end{equation}
preserves the orientation of $\Tbb^2$, hence $\tau$ preserves the orientation of $\partial \Gamma$. 
Note that the condition \eqref{lalin'} implies that $\eta(x, y, z)|_{V_P^\reg} = d \rho(\xi)$. Then by applying Stokes' theorem to \eqref{4.2.2}, we get 
\begin{equation}\label{good}
    \m (P) = \m (\tilde P) - \dfrac{1}{4\pi^2} \int_{\partial \Gamma} \rho (\xi)|_{Y(\Cbb)}.
\end{equation}
We have
$$\int_{\partial \Gamma} \rho(\xi)|_{Y(\Cbb)} = \int_{\tau(\partial \Gamma)} \tau^* (\rho(\xi)|_{Y(\Cbb)}) = \int_{\partial \Gamma} \tau^*(\rho(\xi)|_{Y(\Cbb)}) = \int_{\partial \Gamma} \rho(\xi^*)|_{Y(\Cbb)},$$
where the second equality is because $\tau(\partial \Gamma) = \partial \Gamma$ as sets and $\tau$ preserves the orientation of $\partial \Gamma$. Then by equation \eqref{good}, we get $$\m(P) - \m (\tilde P) = - \dfrac{1}{4 \pi^2}\int_{\partial \Gamma} \rho(\xi)|_{Y(\Cbb)} =  - \dfrac{1}{8 \pi^2} \int_{\partial \Gamma} \rho(\xi)|_{Y(\Cbb)} +\rho(\xi^*)|_{Y(\Cbb)} = -   \dfrac{1}{8 \pi^2} \int_{\partial \Gamma} \rho(\lambda),$$
which is exactly the formula \eqref{4.9}.
\end{proof}

\subsection{Constructing an element in the motivic cohomology}\label{motivic}
In Section \ref{Deligne}, we constructed an element $\lambda$ that defines a class in $H^2(\Gamma(Y, 3))$ and its regulator is represented by the differential 1-form $\rho(\lambda)$. In this section, we construct an element in $H^2(\Gamma(C_K, 3))$, where $C_K = C \times_\Qbb K$ for a certain number field $K$. It gives rise to an element in motivic cohomology $H^2_\Mcal(C_K, \Qbb(3))$ via De Jeu's map $\beta_{C_K}$. Finally, we show that this motivic cohomology class descends to an element in $H^2_\Mcal(C, \Qbb(3))$.

Recall that the residues $u_p$ are trivial for all $p \notin S$, where $S$ is the closed subset of $C$ defined in \eqref{S}. As discussed in Remark \ref{4.7}, if $u_p$ vanish for all $p \in S$, $\lambda$ defines an element in $H^2(\Gamma(C, 3))$. When the residues are nontrivial, we modify $\lambda$ by its residues. This method is inspired by Bloch's trick (see e.g., \cite{Blo10}, \cite{Nek13}). Let $S'$ be the closed subset of  $S$ consisting of the points  $p$ such that $u_p \neq 0$. Let $K$ be the splitting field of $S'$ in $\Cbb$; this is the smallest Galois extension $K/\Qbb$ that contains all the residue fields $\Qbb(p)$ for $p \in S'$. For a geometric point $q: \Qbb(p) \hookrightarrow K$ over a point $p$ of $S'$, we define $u_q$ as the image of $u_p$ under the embedding $\Bcal(\Qbb(p))\xhookrightarrow{q} \Bcal(K)$. Then for $q \in S'(K)$, $u_q$ defines an element in the Bloch group $\Bcal(K)$. It is compatible with the Galois action, i.e., $\sigma(u_q) = u_{\sigma(q)}$ for $q\in S'(K)$ and $\sigma \in \Gal(K/\Qbb)$,
\begin{equation}\label{sigma(up)}
    \begin{tikzcd}
        \Bcal(\Qbb(p)) \arrow[r,hook, "q"] \arrow[rd, hook] & \Bcal(K) \arrow[d,"\sigma"]\\
      &\Bcal(K).
    \end{tikzcd}
\end{equation}
Denote by $K(C)$ the function field of $C\times_\Qbb K$. The inclusion $\Qbb(p) \xhookrightarrow{j} K(C)$ induces a map $B_i(\Qbb(C)) \xrightarrow{j} B_i(K(C))$, which is not an inclusion generally. We have the following commutative diagram 
\begin{equation*}
    \xymatrix{ B_3(\Qbb(C))  \ar[d] \ar[r] & B_2 (\Qbb(C))\otimes \Qbb(C)^\times_\Qbb \ar[r]^{\hspace{1cm}\alpha_3(2)} \ar[d]_{j }& \bigwedge^3 \Qbb(C)^\times_\Qbb \ar@{^{(}->}[d]
    \\B_3(K(C))  \ar[r] & B_2 (K(C))\otimes K(C)^\times_\Qbb \ar[r]^{\hspace{1cm}\alpha_3(2)} & \bigwedge^3 K(C)^\times_\Qbb.}
\end{equation*}
This implies a map $j : H^2(\Gamma(\Qbb(C), 3)) \to H^2(\Gamma(K(C), 3))$. By Lemma \ref{o}, $\lambda$ defines a class in $H^2(\Gamma(\Qbb(C), 3))$, hence $j(\lambda)$ defines a class in $H^2(\Gamma(K(C), 3))$. We have the following exact sequence 
\begin{center}\begin{tikzcd}[cramped,column sep=small]
 0 \to H^2(\Gamma(C_K, 3)) \arrow[r] & H^2(\Gamma(K(C), 3)) \arrow[rr,"\oplus \partial_q"] && \bigoplus_{q \in (C_K)^1} H^1(\Gamma(K, 2)),
\end{tikzcd}
\end{center}
where $(C_K)^1$ is the set of closed points of $C_K$. We have $\partial_q(j(\lambda)) = u_q$ for  $q \in S'_K = S'(K)$ and trivial otherwise. 

We assume that the difference of any two geometric points $q_1, q_2 \in S'(K)$ in the Jacobian of $C$ is torsion of order dividing a fixed integer $N$. Fix $\Ocal \in  S'(K)$. Then for any point $q \in S'(K) - \{\Ocal\}$, there is a rational function $f_q \in K(C)^\times$ such that 
\begin{equation}\label{f_p}
    \div (f_q) = N(\Ocal) - N(q)
\end{equation}
in $C_{K}$. And we set $f_\Ocal = 1$.

\begin{definition}\label{lambda'}
We set
\begin{equation}
    \lambda' := j(\lambda) + \sum_{q \in S'(K) - \{\Ocal\}} \dfrac{1}{N} (u_q\otimes f_q),
\end{equation}
which defines an element in $B_2(K(C)) \otimes K(C)_\Qbb^\times$.
\end{definition}

\begin{lm}\label{r}
The element $\lambda'$ defines a class in $H^2(\Gamma(K(C), 3))$. \end{lm}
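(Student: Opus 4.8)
The plan is to verify directly that $\lambda'$ is a $2$-cocycle in $\Gamma(K(C),3)$, i.e. that $\alpha_3(2)(\lambda')=0$ in $\bigwedge^3 K(C)^\times_\Qbb$. Since the complex $\Gamma(K(C),3)$ has three terms in degrees $1,2,3$ and $H^2$ is computed as $\ker\alpha_3(2)/\mathrm{im}\,\alpha_3(1)$, being such a cocycle is exactly what is needed for $\lambda'$ to define a class in $H^2(\Gamma(K(C),3))$. By $\Qbb$-linearity of $\alpha_3(2)$ it then suffices to treat the two types of summands of $\lambda'$ separately: the base-changed term $j(\lambda)$, and each correction term $\tfrac1N\,(u_q\otimes f_q)$ for $q\in S'(K)\setminus\{\Ocal\}$.

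For the term $j(\lambda)$, I would invoke the commutative square relating $\alpha_3(2)$ over $\Qbb(C)$ and over $K(C)$ that is displayed just before Definition \ref{lambda'}: it says that $\alpha_3(2)\circ j$ agrees with the composition of $\alpha_3(2)$ with the natural map $\bigwedge^3\Qbb(C)^\times_\Qbb\to\bigwedge^3 K(C)^\times_\Qbb$. Hence $\alpha_3(2)(j(\lambda))$ is the image of $\alpha_3(2)(\lambda)$, and by Lemma \ref{o} we have $\alpha_3(2)(\lambda)=0$ in $\bigwedge^3\Qbb(C)^\times_\Qbb$, so $\alpha_3(2)(j(\lambda))=0$.

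For the correction terms the key point is that, by construction, each $u_q$ is not an arbitrary element of $B_2(K(C))$ but comes from the Bloch group $\Bcal(K)=H^1(\Gamma(K,2))=\ker\!\big(\alpha_2(1)\colon B_2(K)\to\bigwedge^2 K^\times_\Qbb\big)$, viewed inside $B_2(K(C))$ via the base-field inclusion $K\hookrightarrow K(C)$ (this is how $u_q$ was defined, through $\Bcal(\Qbb(p))\hookrightarrow\Bcal(K)$). Writing $u_q=\sum_\ell n_{q,\ell}\{a_{q,\ell}\}_2$ with $a_{q,\ell}\in K$, the cocycle condition $u_q\in\ker\alpha_2(1)$ reads $\sum_\ell n_{q,\ell}\,(1-a_{q,\ell})\wedge a_{q,\ell}=0$ in $\bigwedge^2 K^\times_\Qbb$, and this relation persists in $\bigwedge^2 K(C)^\times_\Qbb$ because $K^\times\hookrightarrow K(C)^\times$ induces a $\Qbb$-linear map $\bigwedge^2 K^\times_\Qbb\to\bigwedge^2 K(C)^\times_\Qbb$. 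Wedging with the fixed element $f_q\in K(C)^\times$ being linear, we get
\[
\alpha_3(2)(u_q\otimes f_q)=\sum_\ell n_{q,\ell}\,(1-a_{q,\ell})\wedge a_{q,\ell}\wedge f_q=\Big(\sum_\ell n_{q,\ell}\,(1-a_{q,\ell})\wedge a_{q,\ell}\Big)\wedge f_q=0 .
\]
Summing over $q$ and adding the $j(\lambda)$ contribution gives $\alpha_3(2)(\lambda')=0$, so $\lambda'$ defines a class in $H^2(\Gamma(K(C),3))$.

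I do not expect a serious obstacle: the argument is essentially formal, and the genuine work was done earlier in arranging that $u_q$ lands in $\Bcal(K)$ rather than merely in $B_2(K)$. The only points deserving a little care are the bookkeeping that $u_q\otimes f_q$ is a well-defined element of $B_2(K(C))\otimes_\Qbb K(C)^\times_\Qbb$ (again via the base-field inclusion) and that the $\Qbb$-coefficient $\tfrac1N$ causes no trouble since the complex is defined over $\Qbb$. Note also that the specific divisor of $f_q$ from \eqref{f_p} plays no role in this lemma — only $f_q\in K(C)^\times$ is used here — that property will instead be needed for the residue computations and the descent to $H^2_\Mcal(C,\Qbb(3))$ afterwards. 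If one prefers a more conceptual formulation, the same computation says that $u_q\otimes f_q$ is the image of $u_q\otimes f_q$ under the product map $H^1(\Gamma(K,2))\otimes K(C)^\times_\Qbb\to H^2(\Gamma(K(C),3))$ coming from the multiplicative structure of the polylogarithmic complexes, which automatically outputs cocycles.
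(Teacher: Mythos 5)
Your proof is correct and follows the same route as the paper: it reduces to checking $\alpha_3(2)(\lambda')=0$, handles $j(\lambda)$ via Lemma \ref{o} and the compatibility of $\alpha_3(2)$ with the base change $\Qbb(C)\to K(C)$, and kills each correction term using $\alpha_3(2)(u_q\otimes f_q)=\alpha_2(1)(u_q)\wedge f_q=0$ because $u_q\in\Bcal(K)=\ker\alpha_2(1)$. The extra bookkeeping you include (writing $u_q$ as an explicit sum, noting that $\div(f_q)$ is irrelevant at this stage) is consistent with, and slightly more detailed than, the paper's argument.
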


\begin{proof} 
For $q\in S'(K)$, we recall  the following Goncharov's complex \eqref{ee}:
\begin{equation}
    \xymatrix{
    B_3(K(C))  \ar[r] & B_2 (K(C))\otimes K(C)^\times_\Qbb \ar[r]^{\hspace{1cm}\alpha_3(2)} \ar[d]_{\partial_q}& \bigwedge^3 K(C)^\times_\Qbb \ar[d]\\
    & B_2(K) \ar[r]^{\alpha_2(1)} & \bigwedge^2 K^\times_\Qbb.}
\end{equation}
As discussed above, $j(\lambda)$ defines a class in $H^2(\Gamma(K(C), 3))$, hence $\alpha_3(2)(j(\lambda)) = 0$. For $q \in S'(K)$, we have $\alpha_2(1)(u_q) = 0$ because $u_q \in \Bcal(K)$. We thus have $\alpha_3(2)  \left(u_q \otimes f_q\right)  = (\alpha_2(1) (u_q)) \wedge f_q  = 0$. This implies that 
$$\alpha_3(2) (\lambda') = \alpha_3(2)(j(\lambda)) + \dfrac{1}{N} \sum_{q \in S'(K)- \{\Ocal\}} \alpha_3(2)(u_q \otimes f_q) = 0,$$ hence $\lambda'$ defines an element in $H^2 (\Gamma(K(C), 3))$.  
\end{proof}

Notice that $\lambda'$ depends on the choice of rational function $f_q \in K(C)^\times$. However, the following lemma is sufficient for us.

\begin{lm}\label{h}
The image of $\lambda'$ under De Jeu's map  (\ref{beta})
\begin{equation}
    \beta_{K(C)} : H^2 (\Gamma(K(C), 3)) \to H^2_\Mcal (K(C), \Qbb(3)),
\end{equation}
does not depend on the choice of $f_q \in K(C)^\times$.
\end{lm}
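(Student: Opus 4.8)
The plan is to show that changing the choice of the functions $f_q$ alters $\lambda'$ only by the image, under the base-change map $H^2(\Gamma(K,3))\to H^2(\Gamma(K(C),3))$, of a class that comes from the number field $K$ itself, and then to use that such \emph{constant} classes are annihilated by $\beta_{K(C)}$. Concretely, let $\{f_q\}_q$ and $\{f'_q\}_q$ be two families as in \eqref{f_p}, with resulting elements $\lambda'_1,\lambda'_2$ of Definition \ref{lambda'}. For $q\in S'(K)\setminus\{\Ocal\}$ both $f_q$ and $f'_q$ have divisor $N(\Ocal)-N(q)$ on the projective curve $C_K$, so $\div(f_q/f'_q)=0$; as $C$ is geometrically connected over $\Qbb$, the curve $C_K$ is connected and $c_q:=f_q/f'_q\in K^\times$. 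Writing $c_\Ocal:=1$ and using additivity of $\otimes$ in the second factor,
\begin{equation*}
    \lambda'_1-\lambda'_2=\sum_{q\in S'(K)\setminus\{\Ocal\}}\frac1N\bigl(u_q\otimes c_q\bigr),
\end{equation*}
and this is the image in $B_2(K(C))\otimes K(C)^\times_\Qbb$ of $\alpha_0:=\sum_q\frac1N(u_q\otimes c_q)\in B_2(K)\otimes K^\times_\Qbb$. Since $u_q\in\Bcal(K)=\ker\bigl(\alpha_2(1)\bigr)$, we get $\alpha_3(2)(\alpha_0)=\sum_q\frac1N\alpha_2(1)(u_q)\wedge c_q=0$, so $\alpha_0$ is a cocycle and defines a class in $H^2(\Gamma(K,3))$, whose image in $H^2(\Gamma(K(C),3))$ we denote $j(\alpha_0)$. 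By $\Qbb$-linearity of $\beta_{K(C)}$ it suffices to prove $\beta_{K(C)}(j(\alpha_0))=0$.

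Since $\beta_{K(C)}=\varphi_{K(C)}\circ\psi_{K(C)}$, set $\xi:=\psi_{K(C)}(j(\alpha_0))=\sum_q\frac1N[u_q]_2\otimes c_q\in H^2(\widetilde{\Mcal}^\bullet_{(3)}(K(C)))$; by functoriality of $\psi$, $\xi$ is the image of the cocycle $\psi_K(\alpha_0)\in H^2(\widetilde{\Mcal}^\bullet_{(3)}(K))$. Recall from Section \ref{dJmapsection} that $\varphi_{K(C)}(\xi)=\varphi^2_{(3)}(\xi)-h$, where $h$ is the unique element of $H=H^1_\Mcal(K,\Qbb(2))\cup K(C)^\times_\Qbb$ with $\Res^\Mcal|_H(h)=\Res^\Mcal(\varphi^2_{(3)}(\xi))-2\,\varphi^1_{(2)}(\delta(\xi))$. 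The element $\xi$ is constant, i.e.\ all of its $K(C)^\times$-entries are the units $c_q\in K^\times$, which have vanishing order $0$ at every closed point of $C_K$; hence the De Jeu residue $\delta(\xi)=0$. Moreover, by functoriality of $\varphi^2_{(3)}$ along $K\hookrightarrow K(C)$, the class $\varphi^2_{(3)}(\xi)$ is the image of $\varphi^2_{(3)}(\psi_K(\alpha_0))\in H^2_\Mcal(K,\Qbb(3))$, and the latter group is $K^{(3)}_4(K)=0$ since $K_4$ of a number field is torsion (Borel; cf.\ Section \ref{motiviccohomology}); thus $\varphi^2_{(3)}(\xi)=0$ and in particular $\Res^\Mcal(\varphi^2_{(3)}(\xi))=0$. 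Therefore $\Res^\Mcal|_H(h)=0$, and $h=0$ by injectivity of $\Res^\Mcal|_H$ (\cite[Remark 4.4]{dJ00}). Hence $\beta_{K(C)}(j(\alpha_0))=\varphi_{K(C)}(\xi)=\varphi^2_{(3)}(\xi)=0$, i.e.\ $\beta_{K(C)}(\lambda'_1)=\beta_{K(C)}(\lambda'_2)$.

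The substantive point, and the only possible obstacle, is the vanishing of the De Jeu modification term $h$ attached to $\xi$: this needs both that $\xi$ has no residues on $C_K$ (immediate, since $\xi$ is constant) and that $\varphi^2_{(3)}(\xi)$ itself vanishes, which in turn uses the functoriality of De Jeu's maps $\psi$ and $\varphi^2_{(3)}$ under the field extension $K\hookrightarrow K(C)$ together with the Borel vanishing $H^2_\Mcal(K,\Qbb(3))=0$. Everything else — the identity for $\lambda'_1-\lambda'_2$, the fact that $f_q/f'_q$ is a constant, and that $\alpha_0$ is a cocycle — is routine, and no new functional equations or regulator integrals are involved.
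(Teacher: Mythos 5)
Your proof is correct and follows essentially the same route as the paper: the difference of the two choices is a constant class $\sum_q \tfrac1N\, u_q \otimes c_q$ coming from a number field, which dies in motivic cohomology because $K_4$ of a number field is torsion (Borel). The paper phrases this by viewing each $u_q\otimes(f_q/f'_q)$ as a class in $H^2(\Gamma(L,3))$ for the constant field $L$ (allowing $L$ to be a finite extension of $K$ rather than invoking geometric connectedness) and applying $\beta_L$ with target $K_4(L)_\Qbb=0$; your additional verification that De Jeu's modification term $h$ vanishes is just a more explicit rendering of the functoriality the paper uses implicitly.
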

\begin{proof}
Let $q \in S'(K)$. Let $f_q' \in K(C)^\times$ be another rational function such that $\div (f_q') = N(\Ocal)- N(q)$. Then $\div(f_q/f'_q) = 0$, hence $f_q/f'_q$ defines an element in a finite field extension of $K$, denoted by $L$. Then $u_q \otimes (f_q/f_q')$ defines an element in $B_2(L)\otimes L^\times$. In the proof of Lemma \ref{r}, we showed that $\alpha_3(2)(u_q \otimes f_q) = 0$, this implies that 
$$\alpha_3(2) (u_q \otimes (f_q/f_q')) = \alpha_3(2) (u_q \otimes f_q) - \alpha_3(2)(u_q \otimes f_q') = 0,$$
hence $u_q \otimes (f_q/f_q')$ defines a class in $H^2(\Gamma(L, 3))$. We consider De Jeu's map
\begin{equation*}
   \beta_L : H^2 (\Gamma(L, 3))  \to  K_4(L)_\Qbb.
\end{equation*}
By Borel's theorem, $K_4$ group of a number field is torsion, so $K_4(L)_\Qbb = 0$. This implies that the images of $u_q \otimes (f_q/f'_q)$ under the map $\beta_L$ in $K_4(L)_\Qbb$ all vanish. Hence the image of $\lambda'$ under De Jeu's map does not depend on the choice of $f_q$.
\end{proof}

\begin{lm}\label{l}
All the residues of $\lambda'$ 
in the following localization sequence vanish,
\begin{equation*}
     0 \to H^2(\Gamma(C_{K}, 3)) \to  H^2(\Gamma(K(C), 3)) \xrightarrow{ 2\partial} \bigoplus_{q \in (C_K)^1} H^1(\Gamma(K, 2)),
\end{equation*}
and thus $\lambda'$ defines a unique element $\lambda'_{C_K} \in H^2(\Gamma(C_K, 3))$.
\end{lm}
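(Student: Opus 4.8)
The plan is to compute the residue $\partial_{q}(\lambda')$ at every closed point $q$ of $C_K$ and check that it vanishes. The residues of $\lambda'$ are automatically supported on $S'(K)$: the class $j(\lambda)$ has zero residue away from $S'(K)$ since it is the image of $\lambda$, whose residues away from $S'$ vanish by Lemma~\ref{o} and the definition of $S'$, and each $f_q$ has divisor $N(\Ocal)-N(q)$ supported on $S'(K)$ by \eqref{f_p}. Once $\partial(\lambda')=0$, the left‑exactness of the displayed localization sequence (as in \eqref{cc}, coming from the vanishing of degree‑$0$ cohomology of the polylogarithmic complexes) produces the unique lift $\lambda'_{C_K}\in H^2(\Gamma(C_K,3))$.

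First I would record the residues of the pieces of $\lambda'=j(\lambda)+\sum_{q\in S'(K)\setminus\{\Ocal\}}\tfrac1N(u_q\otimes f_q)$. We already know $\partial_q(j(\lambda))=u_q$ for $q\in S'(K)$, and $\partial_q(j(\lambda))=0$ at the remaining closed points of $C_K$ (there the residue is the image of $u_p$ for the point $p$ of $C$ below, which is $0$ as $p\notin S'$; note there is no ramification at these places in the constant field extension $\Qbb(C)\subset K(C)$). For the correction terms, each $u_q$ lies in the constant subgroup $B_2(K)\subset B_2(K(C))$, so Goncharov's residue formula \eqref{Gonresmap} gives
\begin{equation*}
\partial_{q'}(u_q\otimes f_q)=\mathrm{ord}_{q'}(f_q)\,u_q=
\begin{cases} N\,u_q & q'=\Ocal,\\ -N\,u_q & q'=q,\\ 0 & \text{otherwise,}\end{cases}
\end{equation*}
using $\div(f_q)=N(\Ocal)-N(q)$. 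Assembling the pieces: $\partial_{q'}(\lambda')=0$ for $q'\notin S'(K)$; $\partial_{q'}(\lambda')=u_{q'}-u_{q'}=0$ for $q'\in S'(K)\setminus\{\Ocal\}$; and at $q'=\Ocal$ one gets $\partial_\Ocal(\lambda')=u_\Ocal+\sum_{q\in S'(K)\setminus\{\Ocal\}}u_q=\sum_{q\in S'(K)}u_q$. So the whole statement reduces to the reciprocity identity $\sum_{q\in S'(K)}u_q=0$ in $\Bcal(K)$.

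To prove this reciprocity I would pass to motivic cohomology. By the analogue over $C_K$ of diagram \eqref{dJdiagram}, the class $\beta_{K(C)}(j(\lambda))\in H^2_\Mcal(K(C),\Qbb(3))$ has motivic residue $2\,\beta_{(2)}^1(u_q)$ at each $q\in S'(K)$ and zero residue elsewhere, hence lies in $H^2_\Mcal(C_K\setminus S'_K,\Qbb(3))$. In the localization sequence \eqref{loclization1} for $S'_K\hookrightarrow C_K$, the composition of $\Res^\Mcal$ with the Gysin map $\iota_*\colon\bigoplus_{q\in S'(K)}H^1_\Mcal(K,\Qbb(2))\to H^3_\Mcal(C_K,\Qbb(3))$ is zero by exactness. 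Composing further with the proper pushforward $\pi_*\colon H^3_\Mcal(C_K,\Qbb(3))\to H^1_\Mcal(\Spec K,\Qbb(2))$ along $\pi\colon C_K\to\Spec K$, and noting that $\pi_*\circ\iota_*$ is the fold (sum) map because every $q\in S'(K)$ has residue field $K$, we obtain $\sum_{q\in S'(K)}\Res^\Mcal_q(\beta_{K(C)}(j(\lambda)))=0$, i.e. $\sum_{q\in S'(K)}2\,\beta_{(2)}^1(u_q)=0$. Since $\beta_{(2)}^1$ is an isomorphism of $\Qbb$‑vector spaces, $\sum_{q\in S'(K)}u_q=0$, as required.

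The residue bookkeeping of the first two paragraphs is routine; the main obstacle is the reciprocity $\sum_{q\in S'(K)}u_q=0$. It rests on the compatibility of the Goncharov/De~Jeu residue maps with the motivic localization sequence (diagram \eqref{dJdiagram}), hence on the comparison theorems recalled in Sections~\ref{Goncharov}--\ref{DeJeu}, and on the properness of $C_K$ to make the Gysin pushforward $\pi_*$ available; one should also check that $C_K$ is geometrically connected over $K$ (or argue component by component) so that the machinery of Section~\ref{DeJeu} applies as stated. Equivalently, this identity is a form of Goncharov's reciprocity law for the weight‑$3$ polylogarithmic complex on the proper curve $C_K$.
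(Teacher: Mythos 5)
Your proposal is correct and follows essentially the same route as the paper: the same residue bookkeeping reduces everything to $\sum_{q\in S'(K)}u_q=0$, and the paper establishes that identity exactly as you do, by combining the exactness of the motivic localization sequence for $S'_K\hookrightarrow C_K$ with the pushforward $\pi_{K*}\circ i_{K*}=\Sigma$ (the trace/sum map) and the compatibility diagram \eqref{dJdiagram}. The only cosmetic difference is that you track the factor $2$ in the residues explicitly, which is harmless since $\beta_{(2)}^1$ is an isomorphism of $\Qbb$-vector spaces.
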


\begin{proof}
We have  $\partial_q(\lambda') = 0$ for all $q \notin S'(K)$.  For $q \in S'(K)$, we have 
\begin{equation*}
\begin{aligned}
      \partial_q (\lambda') &= u_q + \sum_{q' \in S'(K)-\{\Ocal\}} \dfrac{1}{N}\  \partial_{q} (u_{q'}\otimes f_{q'})\\
      &= u_q + \sum_{q' \in S'(K) - \{\Ocal\}} \dfrac{1}{N} \cdot v_q (f_{q'}) \cdot u_{q'}\\
      &= \begin{cases}
      u_q + \dfrac{1}{N}\cdot v_q(f_q) \cdot u_q = u_q-u_q = 0 &\text{if } q \neq \Ocal,\\
      u_\Ocal + \sum_{q' \in S'(K)-\{\Ocal\}} \dfrac{1}{N}\cdot v_\Ocal (f_{q'}) \cdot u_{q'} = \sum_{q' \in S'(K)} u_{q'} &\text{if } q = \Ocal.  
      \end{cases}\\
\end{aligned}
\end{equation*}
Now let $\pi_K : C_{K} \to \Spec K$ be the structure morphism and $i_K : (C_K)^1 \hookrightarrow C_{K}$ be the canonical embedding. We have the following commutative diagram (see diagram (\ref{dJdiagram})), where the two horizontal sequences are exact
\begin{equation*}
\xymatrix{
0\ar[r]& H^2(\Gamma(C_{K}, 3)) \ar[r] \ar[d]_{\beta_{C_K}} &  H^2(\Gamma(K(C), 3)) \ar[r]^{ 2 \partial \quad} \ar[d]_{\beta_{K(C)}} & \bigoplus_{q \in (C_K)^1} H^1(\Gamma(K, 2)) \ar[d]_{\simeq}^{\beta_{(2)}^1} &  \\ 
0 \ar[r] & H^2_\Mcal(C_{K}, \Qbb(3))\ar[r] & H^2_\Mcal(K(C), \Qbb(3)) \ar[r]^{\Res^\Mcal\quad}&
\bigoplus_{q \in (C_K)^1}  H^1_\Mcal(K, \Qbb(2)) \ar[r]^{\qquad (i_K)_*\ } \ar[rd]_{\sum }& H^3_\Mcal (C_{K}, \Qbb(3)) \ar[d]^{(\pi_K)_*} \\ & & & &H^1_\Mcal (K, \Qbb(2)),}
\end{equation*}
and $\Sigma$ is the trace map, which sends $(u_q)_{q\in S'(K)}$ to 
$\sum_{q \in S'(K)} u_q$. Then
we have  $\sum_{q\in S'(K)} u_q = 0$ by the commutativity of the bottom triangle. This shows that $\partial_q (\lambda') = 0$ for all $q \in S'(K)$, then $\lambda'$ defines a unique element in $H^2(\Gamma(C_{K}, 3))$. 
\end{proof}

By the previous lemma, we constructed an element $\lambda'_{C_K}\in H^2(\Gamma(C_K, 3))$. Via the map $$\beta_{K_C}: H^2(\Gamma(C_K, 3)) \to H^2_\Mcal(C_K, \Qbb(3)),$$
we obtain a class $\beta_{C_K}(\lambda'_{C_K}) \in H^2_\Mcal (C_{K}, \Qbb(3))$.

\begin{lm}\label{k}
The class $\beta_{C_K} (\lambda'_{C_K}) \in  H^2_\Mcal (C_{K}, \Qbb(3))$ is $\Gal(K/ \Qbb)$-invariant.
\end{lm}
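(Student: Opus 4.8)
The plan is to prove that for every $\sigma \in \Gal(K/\Qbb)$ one has $\sigma^*\big(\beta_{C_K}(\lambda'_{C_K})\big) = \beta_{C_K}(\lambda'_{C_K})$, where $\sigma^*$ is the action of $\sigma$ on $H^2_\Mcal(C_K,\Qbb(3))$ coming from the automorphism $\sigma$ of $C_K = C \times_\Qbb K$ (the action appearing in the Galois descent \eqref{Galoisdescent}). Since Goncharov's and De Jeu's constructions, and hence the maps $\psi$, $\varphi$ and $\beta$ of Section \ref{DeJeu} (the modification of $\varphi$ uses only the canonically determined element $h$), are functorial for isomorphisms of fields, the map $\beta_{K(C)}$ commutes with $\sigma^*$, and the square relating $\beta_{C_K}$ to $\beta_{K(C)}$ through $H^2(\Gamma(C_K,3))\hookrightarrow H^2(\Gamma(K(C),3))$ commutes. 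It therefore suffices to show that $\beta_{K(C)}$ annihilates the difference $\sigma^*(\lambda') - \lambda'$ in $H^2(\Gamma(K(C),3))$.

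First I would make $\sigma^*(\lambda')$ explicit. Write $\lambda' = j(\lambda) + \tfrac1N\sum_{q\in S'(K)} u_q\otimes f_q$ with the convention $f_\Ocal = 1$, so that the $q = \Ocal$ term is zero. The class $j(\lambda)$ is $\Gal(K/\Qbb)$-invariant because $\lambda$ comes from the class of Lemma \ref{o} in $H^2(\Gamma(\Qbb(C),3))$. The Galois action permutes $S'(K)$; writing $\sigma\cdot q$ for this permutation, one has $\sigma^*(u_q) = u_{\sigma\cdot q}$ by \eqref{sigma(up)}, while $\sigma^*(f_q)$ is a rational function on $C_K$ whose divisor is $N(\sigma\cdot\Ocal) - N(\sigma\cdot q)$, obtained by transporting $\div(f_q) = N(\Ocal) - N(q)$ along $\sigma$. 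Re-indexing by $q' = \sigma\cdot q$ then gives
\begin{equation*}
    \sigma^*(\lambda') = j(\lambda) + \tfrac1N\sum_{q'\in S'(K)} u_{q'}\otimes \sigma^*\!\big(f_{\sigma^{-1}\cdot q'}\big),
\end{equation*}
which is again an element of the form used to define $\lambda'$ in Definition \ref{lambda'}, this time with base point $\sigma\cdot\Ocal \in S'(K)$ and auxiliary functions $\sigma^*(f_{\sigma^{-1}\cdot q'})$; these are admissible choices since their divisors equal $N(\sigma\cdot\Ocal) - N(q')$ as required by \eqref{f_p} (and at $q' = \sigma\cdot\Ocal$ the function is $\sigma^*(f_\Ocal) = 1$).

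Now I would subtract. Since $\div\big(\sigma^*(f_{\sigma^{-1}\cdot q'})/f_{q'}\big) = N(\sigma\cdot\Ocal) - N(\Ocal)$ is independent of $q'$, fix one rational function $g_0$ with this divisor and write $\sigma^*(f_{\sigma^{-1}\cdot q'})/f_{q'} = c_{q'}\,g_0$ with $c_{q'}\in K^\times$ (the field of constants, as $C_K$ is geometrically connected). Then
\begin{equation*}
    \sigma^*(\lambda') - \lambda' = \tfrac1N\Big(\sum_{q'\in S'(K)} u_{q'}\Big)\otimes g_0 \;+\; \tfrac1N\sum_{q'\in S'(K)} u_{q'}\otimes c_{q'}.
\end{equation*}
The first summand vanishes because $\sum_{q'\in S'(K)} u_{q'} = 0$ in $\Bcal(K)$, which was established in the proof of Lemma \ref{l}. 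The second summand is, since each $u_{q'}\in\Bcal(K)=\ker\alpha_2(1)$ forces $\alpha_3(2)(u_{q'}\otimes c_{q'}) = (\alpha_2(1)(u_{q'}))\wedge c_{q'} = 0$ (as in the proof of Lemma \ref{r}), the image under $H^2(\Gamma(K,3))\hookrightarrow H^2(\Gamma(K(C),3))$ of a class in $H^2(\Gamma(K,3))$; hence $\beta_{K(C)}$ sends it to the image of a class in $H^2_\Mcal(K,\Qbb(3)) = K_4^{(3)}(K)\subseteq K_4(K)_\Qbb = 0$ by Borel's theorem (compare the proof of Lemma \ref{h}). Therefore $\beta_{K(C)}(\sigma^*(\lambda') - \lambda') = 0$, which proves the lemma.

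The main obstacle is the bookkeeping in the second step — verifying that $\sigma^*$ really carries $\lambda'$ to an element of the prescribed shape, with the base point translated to $\sigma\cdot\Ocal$ and the auxiliary functions replaced by admissible ones — which rests on the Galois-equivariance \eqref{sigma(up)} of the residue classes $u_q$ and on $\lambda$ being defined over $\Qbb$. Once that is in place, the conclusion is the same short computation already carried out for Lemma \ref{h}, the only extra input being the vanishing $\sum_q u_q = 0$ supplied by Lemma \ref{l}.
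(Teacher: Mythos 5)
Your proof is correct and follows essentially the same route as the paper: both rest on the Galois equivariance $\sigma(u_q)=u_{\sigma(q)}$, the rationality of $\lambda$, the vanishing $\sum_{q}u_q=0$ from Lemma \ref{l}, and Borel's theorem ($K_4$ of a number field is torsion) to discard terms of the form $u\otimes c$ with $c$ a constant, exactly as in Lemma \ref{h}. The only difference is organizational — you compute the difference $\sigma^*(\lambda')-\lambda'$ and factor out a single function $g_0$ with divisor $N(\sigma\cdot\Ocal)-N(\Ocal)$, whereas the paper replaces $\sigma(f_q)$ by $f_{\sigma(q)}/f_{\sigma(\Ocal)}$ and telescopes — but the content is identical.
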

\begin{proof}
The Galois action of $\Gal(K/\Qbb)$  on $H^2_\Mcal (C_{K}, \Qbb(3))$ is induced from the action on the function field, hence it is sufficient to check that $\beta_{K(C)}(\lambda') \in H^2(\Gamma(K(C), 3))$ is $\Gal(K/\Qbb)$-invariant. Let $\sigma \in \mathrm{Gal}(K/\Qbb)$, we have
\begin{equation*}
\begin{aligned}
    \sigma (\lambda') &= \sigma(j(\lambda)) + \sum_{q \in S'(K)-\{\Ocal\}} \dfrac{1}{N}\ \sigma(u_q \otimes f_q)
    = j(\lambda) + \sum_{q \in S'(K)-\{\Ocal\}} \dfrac{1}{N}\ u_{\sigma(q)} \otimes \sigma(f_q),
\end{aligned}
\end{equation*}
because $\lambda \in B_2(\Qbb(C)) \otimes \Qbb(C)^\times$ and $\sigma(u_q) = u_{\sigma(q)}$ (see diagram \eqref{sigma(up)}). Since $\div(f_q) = N (\Ocal) - N (q)$ for $q \in S'(K) - \{\Ocal\}$, we have $\div(\sigma(f_q)) = N (\sigma(\Ocal)) - N(\sigma(q)).$
And by definition of  $f_{\sigma(q)}$, we have $\div(f_{\sigma(q)}) = N(\Ocal) - N (\sigma(q))$. Hence 
\begin{equation}\label{125}
    \mathrm{div}(\sigma(f_q)) = \div (f_{\sigma(q)}) -N(\Ocal) + N (\sigma(\Ocal)) = \mathrm{div}(f_{\sigma(q)}) - \div(f_{\sigma(\Ocal)}) = \div(f_{\sigma(q)}/ f_{\sigma(\Ocal)}).
\end{equation} 
Write $\Ocal' = \sigma^{-1}(\Ocal) \in S'(K)$, we have 
\begin{equation*}
    \begin{aligned}
&\hspace{0.5cm}\lambda + \sum_{q \in S'(K) - \{\Ocal\}} \dfrac{1}{N} u_{\sigma(q)} \otimes \dfrac{f_{\sigma(q)}}{f_{\sigma(\Ocal)}}\\
&=\lambda + \sum_{q \in S'(K)-\{\Ocal, \Ocal'\}} \dfrac{1}{N} \ u_{\sigma(q)} \otimes \dfrac{f_{\sigma(q)}}{f_{\sigma(\Ocal)}} + \dfrac{1}{N}\ u_\Ocal \otimes \dfrac{f_{\Ocal}}{f_{\sigma(\Ocal)}}\\
&= \lambda + \sum_{q \in S'(K)-\{\Ocal, \Ocal'\}} \dfrac{1}{N} \ u_{\sigma(q)} \otimes \dfrac{f_{\sigma(q)}}{f_{\sigma(\Ocal)}} - \dfrac{1}{N}\ u_\Ocal \otimes f_{\sigma(\Ocal)}  \quad (\text{as } f_\Ocal = 1)\\
&= \lambda + \sum_{q \in S'(K)-\{\Ocal, \Ocal'\}} \dfrac{1}{N} \ u_{\sigma(q)} \otimes f_{\sigma(q)}  - \sum_{q \in S'(K) - \{\Ocal, \Ocal'\}} \dfrac{1}{N} \  u_{\sigma(q)} \otimes f_{\sigma(\Ocal)} - \dfrac{1}{N} \ u_\Ocal \otimes f_{\sigma(\Ocal)} \\
        &=  \lambda + \sum_{q \in S'(K)-\{\Ocal, \Ocal'\}} \dfrac{1}{N} \ u_{\sigma(q)} \otimes f_{\sigma(q)} - \sum_{q \in S'(K) - \{\Ocal\}} \dfrac{1}{N} \  u_{\sigma(q)} \otimes f_{\sigma(\Ocal)}\\
        &= \lambda + \sum_{q \in S'(K)-\{\Ocal, \Ocal'\}} \dfrac{1}{N} \ u_{\sigma(q)} \otimes f_{\sigma(q)} +  \dfrac{1}{N} \    u_{\sigma(\Ocal)} \otimes f_{\sigma(\Ocal)} \quad (\text{as } \sum_{q \in S'(K)} u_q = 0)\\
        &= \lambda + \sum_{q \in S'(K) - \{\Ocal'\}}\dfrac{1}{N} u_{\sigma(q)}\otimes f_{\sigma(q)}\\
        &= \lambda'.
    \end{aligned}
\end{equation*}
Hence by Lemma \ref{h}, $\beta_{K(C)}(\sigma(\lambda')) = \beta_{K(C)} (\lambda')$ for all $\sigma \in \Gal(K/\Qbb)$. Since De Jeu's map $\beta$ is functorial, it is compatible with the Galois action, so $\sigma(\beta_{K(C)}(\lambda')) =  \beta_{K(C)}(\lambda')$ for all $\sigma \in \Gal(K/
\Qbb)$.
\end{proof}

So that $\beta_{C_K}(\lambda'_{C_K})$ defines a class in  $H^2_\Mcal(C_{K},\Qbb(3))^{\Gal(K/\Qbb)}$. Denote by $\pi : C_K \to C$, we have the Galois descent of motivic cohomology as mentioned in \eqref{Galoisdescent}
\begin{equation}\label{descent}
    \pi^* : H^2_\Mcal (C, \Qbb(3)) \xrightarrow{\simeq} H^2_\Mcal (C_{K}, \Qbb(3))^{\mathrm{Gal}(K/\Qbb)}.
\end{equation}
Hence we have $(\pi^*)^{-1}(\beta_{C_K}(\lambda'_{K_C}))$ is an element in $H^2_\Mcal(C, \Qbb(3))$.

\subsection{Proof of Theorem \ref{0}}\label{relate}
In this section, we keep the notations as in Section \ref{motivic}. To prove Theorem \ref{0},  the main idea is that we relate the regulator of the motivic cohomolgy class constructed in Section \ref{motivic} to the Deligne cohomology class constructed in Section  \ref{Deligne}, hence to the Mahler measure of the polynomial $P$ by Section \ref{relate1}.

First, as mentioned in \eqref{resbloch}, $u_p$ defines an element in $\Bcal(\Qbb(p))$ for $p \in S$. By Remark \ref{4.7}, if $u_p = 0$ for all $p \in S$, then $\beta_C(\lambda_C) \in H^2_\Mcal(C, \Qbb(3))$ and $\rho(\lambda)$ represents the class $r_3(2)_C(\lambda_C) \in H^1(C(\Cbb), \Rbb(2))^+$. Denote by $ i: Y(\Cbb) \hookrightarrow C(\Cbb)$ the canonical embedding. Then by Lemma \ref{90}, we have
\begin{equation*}
\begin{aligned}
      \m (P) - \m(\tilde P) = - \dfrac{1}{8\pi^2} \left<[\partial \Gamma], [\rho(\lambda)]\right>_{Y(\Cbb)} &= -\dfrac{1}{8\pi^2} \left<[\partial\Gamma], i^*r_3(2)_C(\lambda_C) \right>_{Y(\Cbb)}\\
      &= -\dfrac{1}{8\pi^2} \left<i_*[\partial\Gamma], r_3(2)_C(\lambda_C) \right>_{C(\Cbb)}\\
      &= -\dfrac{1}{16\pi^2} \left<i_*[\partial\Gamma], \reg_C(\beta_C(\lambda_C))\right>_{C(\Cbb),}
\end{aligned}
\end{equation*}
where the last equality follows from the diagram \eqref{124}. Apply Beilinson's conjecture \eqref{BeiConj} to $\beta_C(\lambda_C) \in H^2_\Mcal(C, \Qbb(3))$ and $i_*[\partial \Gamma] \in H_1(C(\Cbb), \Qbb)^+$ (see Lemma \ref{90}), we thus have 
\begin{equation*}
    \m(P) - \m(\tilde P) = a \cdot L'(E, -1) \qquad (a \in \Qbb).
\end{equation*}

When the residues $u_p \in \Bcal(\Qbb(p))$ are nontrivial for some $p \in S' \subset S$, we define $u_q \in \Bcal(K)$ for $q\in S'(K)$, where $K$ is the splitting field of $S'$ in $\Cbb$ (see the beginning of Section \ref{motivic}). Let $K(C)$ denote the function field of $C_K$. Fix a point $\Ocal \in S'(K)$. Recall that we define the following element in $B_2(K(C)) \otimes K(C)^\times$ (see Definition \ref{lambda'})
\begin{equation*}
     \lambda' = j(\lambda) + \sum_{q \in S'(K) - \{\Ocal\}} \dfrac{1}{N} (u_q\otimes f_q),
\end{equation*}
where $f_q \in K(C)^\times$ is defined right before Definition \ref{lambda'}. We prove that $\lambda'$ defines a class in $H^2(\Gamma(K(C), 3))$ (see Lemma \ref{r}). The differential form $\rho(\lambda)$ represents the class $r_3(2)_{\Qbb(C)}(\lambda)$, we then have 
\begin{equation*}
    \m(P) - \m(\tilde P) = -\dfrac{1}{8\pi^2} \int_{\partial \Gamma} \rho(\lambda) =  -\dfrac{1}{8\pi^2} \int_{\partial \Gamma} r_3(2)_{\Qbb(C)} (\lambda) = -\dfrac{1}{8\pi^2} \int_{\partial \Gamma} r_3(2)_{K(C)} (j(\lambda)),
\end{equation*}
where  $j : \Qbb(C) \hookrightarrow K(C)$. Hence
\begin{equation}\label{ct}
\begin{aligned}
    \m(P) - \m(\tilde P) &=  -\dfrac{1}{8\pi^2} \int_{\partial \Gamma} r_3(2)_{K(C)}\left(\lambda' - \sum_{q\in S'(K)-\{\Ocal\}} \dfrac{1}{N} u_q \otimes f_q\right)\\
    &= -\dfrac{1}{8\pi^2} \int_{\partial \Gamma} r_3(2)_{K(C)}(\lambda') + \dfrac{1}{8N\pi^2} \sum_{q\in S'(K)- \{\Ocal\}} \int_{\partial \Gamma} r_3(2)_{K(C)} (u_q \otimes f_q)\\
    &= - \dfrac{1}{16\pi^2} \int_{\partial \Gamma} \reg_{K(C)} (\beta_{K(C)}(\lambda')) + \dfrac{1}{8N\pi^2} \sum_{q\in S'(K)- \{\Ocal\}} D(u_q) \int_{\partial \Gamma} d\arg(f_q),
\end{aligned}
\end{equation}
where the last equality follows from Lemma \ref{integral} and the fact that $\partial \Gamma$  is assumed to be contained in $Y(\Cbb)$ (see Lemma \ref{90}).
We have the following commutative diagram
\begin{equation*}
    \xymatrix{
    H^2(\Gamma(C_{K}, 3)) \ar@{^{(}->}[d] \ar[r]^{\beta_{C_K}} & H^2_\Mcal(C_{K}, \Qbb(3)) \ar[r]^{\reg_{C_K}\quad } \ar@{^{(}->}[d] & H^1(C_K(\Cbb), (\Rbb(2))^+ \ar@{^{(}->}[d] \\
    H^2(\Gamma(K(C), 3)) \ar[r]_{\beta_{K(C)}} & H^2_\Mcal(K(C), \Qbb(3)) \ar[r]_{\reg_{K(C)}} & H^1(K(C), \Rbb(2))^+.
    }
\end{equation*}
As $\lambda' \in H^2(\Gamma(K(C), 3))$ defines a unique a class $\lambda'_{C_K} \in H^2(\Gamma(C_K, 3))$ (see Lemma \ref{l}), we have  
\begin{equation*}
    \reg_{K(C)} (\beta_{K(C)}(\lambda')) = \reg_{C_K} (\beta_{C_K}(\lambda'_{C_K})).
\end{equation*}
Hence the first integral in \eqref{ct} can be written as the following pairing in de Rham cohomology 
\begin{equation}\label{pair}
    \left<[\partial \Gamma], (i_K)^*\reg_{C_K}(\beta_{C_K}(\lambda')) \right>_{Y_K(\Cbb)} = \left<(i_K)_*[\partial \Gamma], \reg_{C_K}(\beta_{C_K}(\lambda'_{C_K})) \right>_{C_K(\Cbb)},
\end{equation}
where $i_K : Y_K(\Cbb) \hookrightarrow C_K(\Cbb)$ is the canonical embedding. Moreover, by the functorial property of the Beilinson regulator map, we have the following commutative diagram
\begin{equation*}
  \xymatrix{
    H^2(\Gamma(C_{K}, 3)) \ar[r]^{\beta_{C_K}} &  H^2_\Mcal(C_{K}, \Qbb(3)) \ar[d]^{
    \reg_{C_{K}}} &  H^2_\Mcal (C, \Qbb(3)) \ar@{_{(}->}[l]_{\pi^*} \ar[d]^{\reg_C}\\
    &H^1(C_{K}(\Cbb), \Rbb(2))^+ \ar@{=}[r] &H^1(C(\Cbb), \Rbb(2))^+,
    }
\end{equation*}
where $\pi^*$ is induced from the isomorphism $\pi^* : H^2_\Mcal(C, \Qbb(3)) \xrightarrow{\simeq} H^2_\Mcal(C_K, \Qbb(3))^{\Gal(K/\Qbb)}$ mentioned in \eqref{descent}.  Hence by identifying de Rham cohomology as well as singular homology of $C_K(\Cbb)$ and $C(\Cbb)$, we have
\begin{equation*}
\begin{aligned}
    \left<(i_K)_*[\partial \Gamma], \reg_{C_K}(\beta_{C_K}(\lambda'_{C_K})) \right>_{C_K(\Cbb)} 
    &=\left<{i}_*[\partial \Gamma], \reg_C ((\pi^*)^{-1}(\beta_{C_K}(\lambda'_{C_K}))) \right>_{C(\Cbb)}.
    \end{aligned}
\end{equation*}
Applying Beilinson's conjecture to $(\pi^*)^{-1}(\beta_{C_K}(\lambda'_{C_K})) \in H^2_\Mcal(C, \Qbb(3))$, we obtain that
\begin{equation*}
    \dfrac{1}{(2\pi i)^2} \left<i_*[\partial \Gamma], \reg_C ((\pi^*)^{-1}(\beta_{C_K}(\lambda'_{C_K})))\right>_{C(\Cbb)} = a \cdot L'(E, -1), \quad a \in  \Qbb,
\end{equation*}
where $E$ is the Jacobian of $C$. From \eqref{ct}, by setting $b_q = \frac{1}{2\pi} \int_{\partial \Gamma } d \arg f_q$, we have 
\begin{equation}\label{mainid}
\begin{aligned}
    \m(P) - \m (\tilde P)
    &= a \cdot L'(E, -1) + \dfrac{1}{4N \pi} \sum_{q\in S'(K)\setminus \{\Ocal\}} b_q \cdot D(u_q), \quad a \in \Qbb.
\end{aligned}
\end{equation}
We will show that for $f \in \bar \Qbb(C)^\times$ and $\gamma : [0, 1] \to C(\Cbb)$ is a loop, $\int_{\gamma} d \arg f$ is a integral multiple of $2\pi$. In fact, we can always find a partition 
$$0=a_0 < a_1 < \cdots < a_{n-1} < a_n = 1,$$
such that $\gamma$ is the union of $\gamma_j : [a_j, a_{j+1}] \to C(\Cbb)$ for $j = 0, \dots, n-1$ and $\gamma_j ([a_j, a_{j+1}])$ is contained in a local coordinate chart of $C(\Cbb)$. Then 
\begin{equation*}
\begin{aligned}
     \int_\gamma d \arg f &= \sum_{j=0}^{n-1} \int_{\gamma_j} d \arg f \\
     &= \sum_{j=0}^{n-1} \arg f(\gamma_j(a_{j+1})) - \arg  f(\gamma_j(a_j))\\
     &= - \arg f(\gamma_0(0)) + \arg f(\gamma_{n-1}(1)) + \sum_{j=0}^{n-2} \arg f(\gamma_j(a_{j+1})) - \arg f(\gamma_{j+1}(a_{j+1}))\\
     &= 2\pi k,
\end{aligned}
\end{equation*}
for some integer $k$ since $\gamma_0(0) = \gamma(0) = \gamma(1) = \gamma_{n-1}(1)$ and $\gamma_j (a_{j+1}) = \gamma_{j+1}(a_{j+1})$ for $j=0, \dots, n-2$. In particular, we get $\int_{\partial \Gamma} d \arg f = 2 \pi k$, for some $k \in \Zbb$. This implies that $b_q \in \Zbb$ for all $q \in S'(K)$. Although the coefficients $b_q$ depend on the choice of $\Ocal$, the $D$-values in identity \eqref{mainid} do not. Indeed, if we remove $\Ocal$ from $S'(K)$, its complex conjugation $c (\Ocal) \in S'(K)$ maintains the $D$-values in identity \eqref{mainid} because $D(u_{c(\Ocal)}) = D(c(u_\Ocal)) = -D(u_\Ocal)$, where $c$ is the complex conjugation. \hfill  \qedsymbol

\begin{rmk}\label{6.22} 
\begin{enumerate}
\item [(a)] By Lemma \ref{11}, we have
\begin{equation*}
    \m(P) = \m(\tilde P) + a \cdot L'(E, -1) - \dfrac{1}{8N\pi^2} \sum_{q \in S'(K)- \{\Ocal\}} b_q \cdot \Res_q(\rho(\lambda)).
\end{equation*}

    \item [(b)] In some cases, $D$-values on Bloch group elements can relate to Dirichlet $L$-values. Let $\chi$ be a primitive Dirichlet character of conductor $f$, we have 
\begin{equation*}
    L(\chi, 2) = \dfrac{1}{G(\bar \chi)} \sum_{k=1}^f \bar\chi(k) \ Li_2(e^{2\pi i k/f}),
\end{equation*}
where $G(\bar \chi) = \sum_{k=1}^f \bar \chi(k) e^{2\pi i k/f}$ is the Gauss sum of $\chi$. In particular, when $\chi$ is odd quadratic then
\begin{equation*}
    L(\chi, 2) = \dfrac{1}{\sqrt{f}} \sum_{k=1}^f \chi(k) \ D(e^{2\pi i k/f}).
\end{equation*}
Then
\begin{equation*}
    L'(\chi,-1) = \dfrac{f^{3/2}}{4 \pi } L(\chi, 2) = \dfrac{f}{4\pi}  \sum_{k=1}^f \chi(k) \ D(e^{2\pi i k/f}).
\end{equation*}
In particular, let $\chi_{-3}$ (respectively $\chi_{-4}$) be the nontrivial character of modulo 3 (respectively modulo 4), we have
\begin{equation*}
    L'(\chi_{-3}, -1) = \dfrac{3}{2\pi} D(e^{2\pi i/3})  = \dfrac{1}{\pi} D(e^{i\pi/3}), \quad L'(\chi_{-4}, -1) = \dfrac{2}{\pi} D(e^{i \pi /2}).
\end{equation*}

\end{enumerate}
\end{rmk}

\section{Examples}\label{Examples}
In this section, we apply Theorem \ref{0} to several Mahler measure's identities. We also describe some polynomials to which our main theorem fails to apply. They are numerically conjectured by Boyd and Brunault. 

\subsection{Pure identities}\label{www} In this Section, we apply Theorem \ref{0} to study pure identities of Mahler's measure
\begin{equation*}
    \m (P) \sim_{\Qbb^\times} L'(E, -1),
\end{equation*}
where the notation $a \sim_{\Qbb^\times} b$ means $a/b \in \Qbb^\times$. Notice that most of polynomials in this section are of the form considered in Remark \ref{6.3} 
\begin{equation}\label{exact}
    P(x,y,z)=A(x) + B(x)y +C(x) z,
\end{equation} where $A, B, C$ are products of cyclotomic polynomials. In this case, $\m(\tilde P) = 0$ and $\m(P) \neq 0$. A typical example of pure identity is the Mahler measure of $z + (x+1)(y+1)$, which is conjectured by D. Boyd 
\begin{equation*}
    \m(z+(x+1)(y+1)) = -2 L'(E_{15},-1).
\end{equation*}
It was proved under Beilinson's conjecture and up to a rational factor by Lalín \cite[Section 4.1]{Lal15}. It is then completely proven by Brunault \cite{Bru23}. This polynomial also satisfies our main theorem, we do not discuss it here but focus on other examples. All figures in this section are generated by Maple software. 
\bigskip

a) We prove under Beilinson's conjecture the following pure identity
\begin{equation}\label{pure1}
    \m((1 + x)(1 + y)(x + y) + z) \stackrel{?}{\sim}_{\Qbb^\times} L'(E_{14},-1),
\end{equation}
which is the first identity mentioned in Table \ref{table:1}. In this case, $P$ is not of the form \eqref{exact}, but we still have $\m(\tilde P) = 0$ and the following decomposition
\begin{equation*}
    x \wedge y \wedge z = - x \wedge (1+x) \wedge y \ + \ y \wedge (1+y) \wedge x \ +\  \dfrac{y}{x} \wedge \left(1+\dfrac{y}{x}\right) \wedge x.
\end{equation*}
Hence 
\begin{equation*}
    f_1 = -g_2 = -g_3   = -x, \quad  f_2 = -g_1 = - y, \quad f_3 = -y/x.
\end{equation*}
We have that $W_P$ is given by
\begin{equation*}
    (xy+x+y)(1+x+y)((x + 1)y^2 + (x^2 + x + 1)y + x^2 + x) = 0,
\end{equation*}
which is the union of lines $L_1 : xy + x + y = 0$, $L_2 : 1 + x + y = 0$ and the curve $$C : (x + 1)y^2 + (x^2 + x + 1)y + x^2 + x = 0,$$ which is a nonsingular curve of genus 1.  By the following change of variables
\begin{equation*}
    x = -\dfrac{Y+X^2+1}{X(X - 1)}, \quad y = -\dfrac{Y}{X(X + 1)} - \dfrac{1}{X},
\end{equation*}
we obtain that the Jacobian of $C$ is given by
\begin{equation*}
   E/\Qbb :  Y^2 + XY + Y = X^3 - X,
\end{equation*}
which is an elliptic curve of type 
$14a4$. Its torsion subgroup is $\Zbb/6\Zbb = \left<A\right>$ with $A = (1,-2)$. With the help of Magma \cite{BCP}, we have 
\begin{align*}
    \div(x) = -(5A) +(A) -(4A) +(2A), \quad
    \div(y) = (\Ocal) +(A) - (4A) - (3A).
\end{align*}
Denote by $S$ the closed subscheme of $E$ consisting of all points in supports of above divisors. The values of $f_j$ and $f_j \circ \tau$ at $p \in S$ are either $0, 1$ or $\infty$ for all $j$, then the elements $v_p(g_j) \{f_j(p)\}_2$ and $v_p(g_j \circ \tau) \{f_j \circ \tau(p)\}_2$ are all trivial in $B_2(\Qbb)$ for all $j$ and $p\in S$.  Figure \ref{fig:1} describes the Deninger chain
$$\Gamma :  \{|x| = |y| = |(1+x)(1+y)(x+y)| \ge 1\},$$ and its boundary in polar coordinates $x = e^{it}$ and $y = e^{is}$ for $t, s\in [-\pi, \pi]$.
\begin{figure}[h!]
\centering\includegraphics[width=6cm]{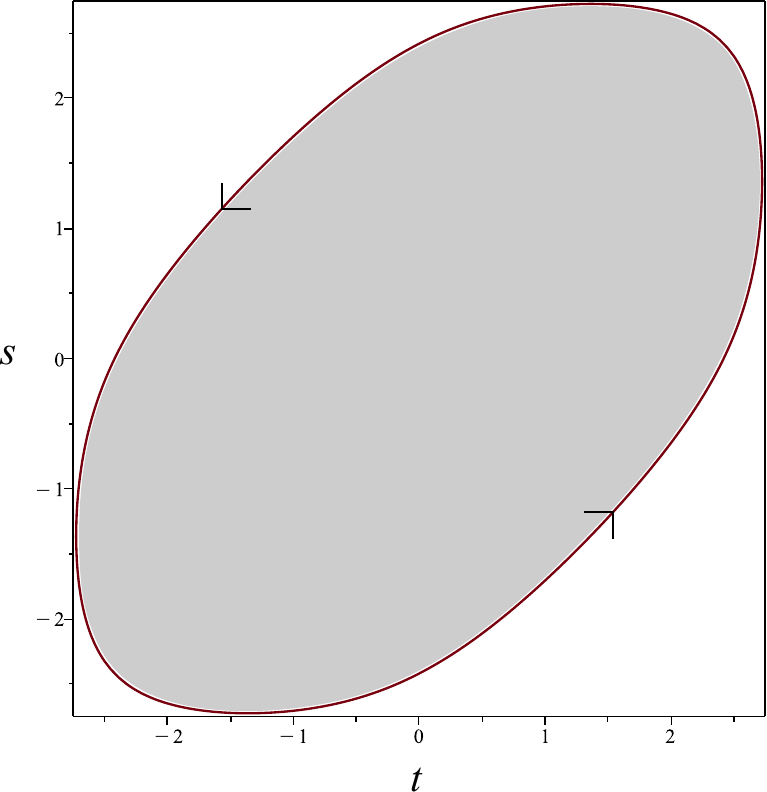}
    \caption{The Deninger chain $\Gamma$.}
    \label{fig:1}
\end{figure}
We obtain that $\partial \Gamma$ is contained completely in $C$ and $\partial \Gamma$ does not contain any zeros and poles of $f_j, 1- f_j, g_j$ for all $j$. Then by Theorem \ref{0}, we have identity \eqref{pure1} under Beilinson's conjecture.
\bigskip 

b) We study the pure identity (2) in Table \ref{table:1} 
\begin{equation}
    \m(1 + x + y + z + xy + xz + yz) \stackrel{?}{\sim}_{\Qbb^\times} L'(E_{14},-1).
\end{equation}
First we notice that
\begin{equation*}
    \m(1+ x + y + xy + z(1+ x + y )) = \m(1+x + y + z(1+x+y+xy)),
\end{equation*}
so it suffices to prove the following identity
\begin{equation}\label{pure8}
    \m(1+x + y + z(1+x+y+xy)) \stackrel{?}{\sim}_{\Qbb^\times} L'(E_{14},-1).
\end{equation}
We have $\m(\tilde P) = \m(1+x+y+xy)  = \m(x+1) \m(y+1) = 0$. We have the following decomposition 
\begin{align*}
    x \wedge y \wedge z &= x \wedge (1+x) \wedge y  \ - \  y \wedge (1+y) \wedge x \ + \  (x+y) \wedge (1+x+y) \wedge x \\
    &\quad  - \  (x+y) \wedge (1+x+y) \wedge y \ - \ \dfrac{x}{y} \wedge (1+ \dfrac{x}{y}) \wedge (1+x+y),
\end{align*}
so 
\begin{equation*}
    f_1 = -x, \quad f_2 = -y, \quad f_3 = f_4 = -(x+y), \quad f_5 = -x/y, \quad g_1 = g_4 = y, \quad g_2 = g_3 = x, \quad g_5 = 1+x+y.
\end{equation*}
We have that $W_P$ is given by $ x(x+1) y^2 + (x^2+x+1)y + x+1 = 0$, which is an irreducible nonsingular curve of genus 1. Using the following change of variables,
\begin{equation*}
    x = -\dfrac{Y+X^2 +1}{X(X - 1)}, \quad y = \dfrac{Y}{X(X+1)},
\end{equation*}
we obtain that the Jacobian of $W_P$ is given by  $E/\Qbb :  Y^2 + XY + Y = X^3 - X,$ which is the same elliptic curve in Example (a). We have
\begin{align*}
    &\div(x) = -(5A) + (A) - (4A) + (2A),  & &\div (1+x+y) = 2(\Ocal) - (5A) + 2(A) - (4A)  - (2A)  - (3A),\\
    &\div (y) = (\Ocal) + (5A) - (2A) - (3A),  & &\div(1+1/x+1/y) = -(\Ocal) + 2(4A) - (2A) + 2(3A) - (5A)  - (A).
\end{align*}
With the same reason as Section (a), we get identity \eqref{pure8} conditionally on Beilinson's conjecture. Moreover, as mentioned in the introduction, we have 
\begin{equation*}
    \m((1 + x)(1 + y)(x + y) + z) \stackrel{?}{\sim}_{\Qbb^\times} \m(1 + x + y + z + xy + xz + yz),
\end{equation*}
because they are rational multiples of the same elliptic curve $L$-value $L'(E_{14}, -1)$.
\bigskip 

c) Similarly, one gets identity (11) in Table \ref{table:1}
\begin{equation*}
     \m(1 + x + y + z + xy + xz + yz - xyz) \stackrel{?}{\sim}_{\Qbb^\times}L'(E_{36},-1).
\end{equation*}
This identity is interesting because this is the only example that has been found with CM elliptic curves.
\bigskip

d) The same arguments apply to all pure identities in Table \ref{table:1}, except for identities (5), (6), (7), (8), and (12).  In this section, we study the first four identities. It suffices to consider identity (5) since Lalín and Nair showed that the polynomials (5), (6), (7), and (8) share the same Mahler measure (see  \cite{LN23}). Let us recall identity (5) in Table \ref{table:1}
\begin{equation}\label{pure2}
    \m (1 + (x+1)y +(x-1)z) \stackrel{?}{\sim}_{\Qbb^\times} L'(E_{21}, -1).
\end{equation}
The polynomial $P = 1 + (x+1)y +(x-1)z$ is of the form \eqref{exact}. We have the following decomposition \begin{equation*}
    x \wedge y \wedge z = x \wedge (1-x) \wedge y + (x+1)y \wedge (1+(x+1)y) \wedge x - x \wedge (1+x) \wedge (1+(x+1)y),
\end{equation*}
so
\begin{equation*}
    f_1 = -f_3 = g_2 = x, \quad  f_2 = -(x+1)y, \quad g_1 = y, \quad g_3 = 1+(x+1)y,  \quad f_2 \circ \tau = -\dfrac{x+1}{xy}, \quad g_3 \circ \tau = \dfrac{xy + x + 1}{xy}.
\end{equation*}
We have that $W_P$ is given by $  x(x + 1)y^2 + (2x^2 + x + 2)y + 1 + x=0,$
which is a non-singular curve of genus 1. Using the following change of variables
\begin{equation*}
    x = -\dfrac{X^2 - 6X + 3Y}{X(X - 6)}, \quad y  = \dfrac{Y - 3X - 3}{X(X + 1)},
\end{equation*} we get an equation for the Jacobian of $W_P$
\begin{equation*}
   E/\Qbb : Y^2 - 3XY - 3Y = X^3 - 5X^2 - 6X,
\end{equation*}
which is an elliptic curve of type $21a1$. Its torsion subgroup is $\Zbb/2\Zbb \times \Zbb/4\Zbb = \left<A\right> \times \left<B\right>$ with 
$A = (-1, 0)$, and $B = (0, 0)$. With the help of Magma \cite{BCP}, we have
\begin{align*}
    &\div(x) = -(A+B) + (A + 3B) - (3B) + (B), & &\div(1+(x+1)y) = 2(2B) - (3B) - (B),\\
    &\div(y) =  (\Ocal) + (A+B) - (B) - (A), &
    &\div(xy+x+1) = (\Ocal) + 2(A+2B) - (A+B) - (3B) - (A).
\end{align*}
Let $S$ be the closed subscheme of $E$ consisting all the points in the supports of the above divisors. We have
\begin{align*}
    \sum_j v_B(g_j)\{f_j(B)\}_2 + v_B(g_j \circ \tau) \{f_j \circ \tau(B)\}_2
    &= v_B(g_2) \{f_2(B)\}_2 + v_B(g_2 \circ \tau) \{f_2 \circ \tau (B)\}_2 \\
    &= \{\infty\}_2 - \{1/2\}_2 = \{2\}_2,
\end{align*}
which is nontrivial in $B_2(\Qbb)$. Therefore, the theorem of Lalín mentioned in the introduction does not apply to this example. As $S$ consists of points in $E_\mathrm{tors}$, we can choose $N$ in Theorem \ref{0} equal  to $\# E_\mathrm{tors} = 8$. Since the points of $S$ have rational coordinates and the $f_i$ have rational coefficients, then $f_i$ take rational values on $S$. Therefore, the Bloch-Wigner dilogarithmic values in identity (\ref{mainid}) all vanish. Figure \ref{fig:3} describes the Deninger chain and its boundary in polar coordinates $x=  e^{it}$, $y = e^{it}$ for $s, t \in [-\pi,\pi]$.
\begin{figure}[h!]
    \centering
\includegraphics[width=6cm]{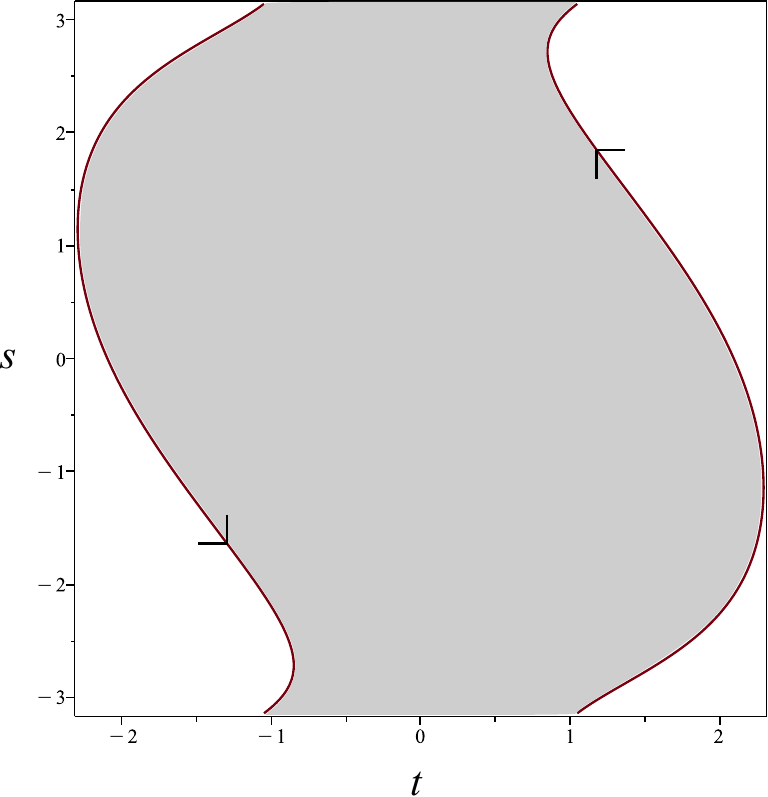}
    \caption{The Deninger chain $\Gamma$.}
    \label{fig:3}
\end{figure}
The boundary $\partial \Gamma$ consists of 2 loops and does not contain any zeros and poles of $f_j, 1-f_j, g_j, f_j \circ \tau, 1-f_j \circ \tau, g_j \circ \tau$ for all $j$. Hence by Theorem \ref{0}, we get identity (\ref{pure2}) conditionally on Beilinson's conjecture. In particular, under Beilinson's conjecture, we have
\begin{equation*}
     \m(1 + (x + 1)y + (x - 1)z) \stackrel{?}{\sim}_{\Qbb^\times} \m((x+1)^2(y+1) + z),
\end{equation*}
as they are rational multiples of $L'(E_{21}, -1)$.
\bigskip

e) There is an interesting remark on identities (4) and (10) of Table \ref{table:1}.  By some trivial change of variables, we obtain 
\begin{equation*}
    \m((x+1)^2+(1-x)(y+z)) = \m((x+1)(y+1) +(x-1)^2 z). 
\end{equation*}
Theorem \ref{0} applies to $P = (x+1)^2+(1-x)(y+z)$ but not to $P = (x+1)(y+1) +(x-1)^2 z$. Indeed, in the latter case, $W_P$ is given by 
\begin{equation*}
    (-x^3 - 2x^2 - x)y^2 + (x^4 - 6x^3 + 2x^2 - 6x + 1)y - x^3 - 2x^2 - x = 0,
\end{equation*}
which is a curve having a singularity at $(1, -1)$, and the boundary $\partial \Gamma$ passes this singular point. Figure \ref{fig:11} describes the Deninger chain $\Gamma$ and its boundary $\partial \Gamma$ in polar coordinates $x=e^{it}, y= e^{is}$ for $t, s \in [-\pi, \pi]$, where the marked points indicate the singular point $(1, -1)$.  Using Magma \cite{BCP}, one can check that $\partial \Gamma$ is no longer a loop  on the normalization of $W_P$.
\begin{figure}[h!]
    \centering
    \includegraphics[width=6cm]{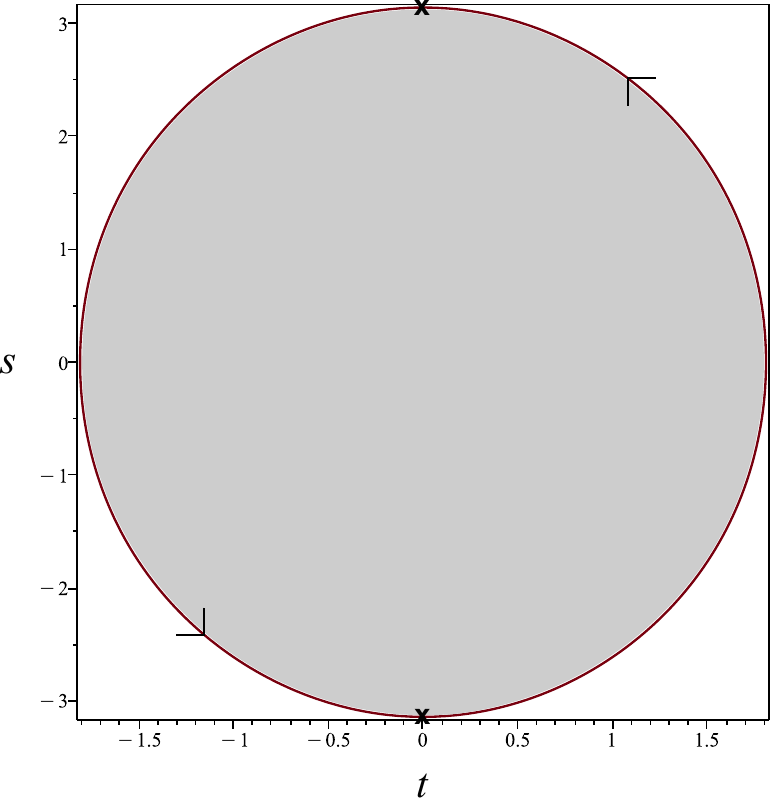}
    \caption{The Deninger chain $\Gamma$.}
    \label{fig:11}
\end{figure}

\noindent
The same situation happens with identity (10) in Table \ref{table:1}. By some trivial changes of variables, we have
\begin{equation*}
    \m((1+x)^2+y+z) = \m(1+y+(1+x)^2z),
\end{equation*}
where Theorem \ref{0} applies to the first polynomial but not the second one.
\bigskip 

f) We study identity (12) in Table \ref{table:1}
\begin{equation}\label{391}
     \m(1 + xy + (1 + x + y)z) \stackrel{?}{\sim}_{\Qbb^\times} L'(E_{90},-1).
\end{equation}
We have the following decomposition 
\begin{equation*}
    \begin{aligned}
        x \wedge y \wedge z =  xy \wedge (1+xy) \wedge x - (x+y) \wedge (1+x+y) \wedge x &+ (x+y) \wedge (1+x+y) \wedge y \\
        &+ \dfrac{-x}{y} \wedge \left(1 + \dfrac{x}{y}\right)\wedge (1 +x+y),
    \end{aligned}
\end{equation*}
so
\begin{equation*}
    f_1 = -xy, \quad f_2 = f_3 = -(x+y), \quad f_4  = -x/y, 
\end{equation*}
\begin{equation*}
    g_1 = g_2 = x, \quad g_3 = y, \quad  g_4 = 1+x+y.
\end{equation*}
The curve $W_P$ is given by 
\begin{equation*}
    (-x^2 + x + 1)y^2 + (x^2 + x + 1)y + x^2 + x - 1 = 0,
\end{equation*}
which is an irreducible curve of genus 1 and does not contain any rational points. Figure \ref{fig9.4} describes the Deninger chain and its boundary in polar coordinates. We find that $\partial \Gamma$ does not contain any singular points of $W_P$. 
\begin{figure}[h!]
    \centering
    \includegraphics[width=6cm]{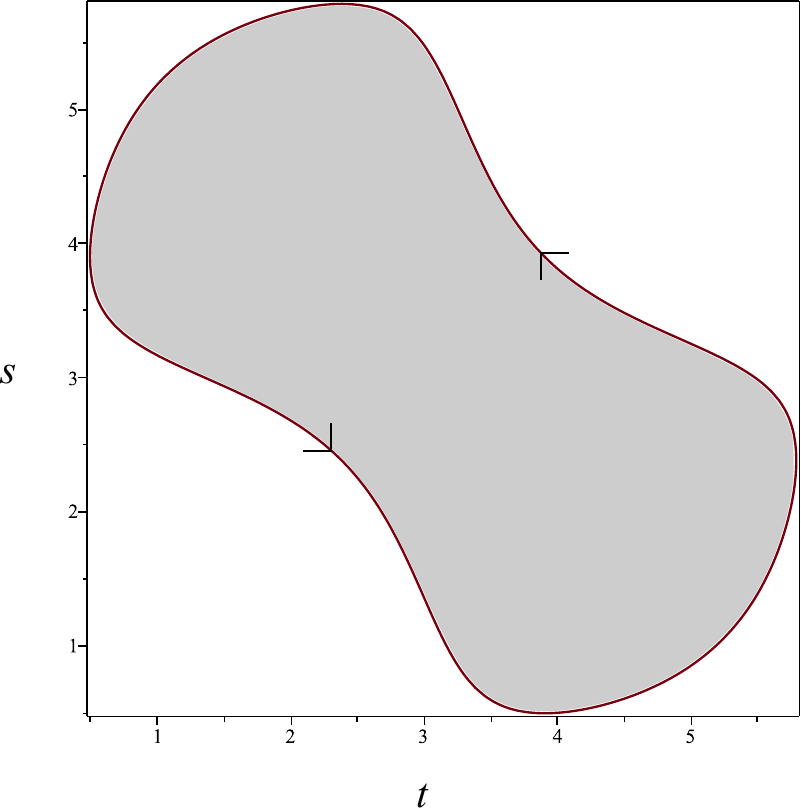}
    \caption{ The Deninger chain $\Gamma$ corresponding to \eqref{391}.}
    \label{fig9.4}
\end{figure}

By Magma \cite{BCP}, we obtain that the Jacobian of $C$ is given by
\begin{equation*}
    E/\Qbb : Y^2 + XY + Y = X^3 - X^2 - 8X + 11,
\end{equation*}
which is an elliptic curve of type $90b1$. Its torsion subgroup is $\Zbb/6\Zbb = \left<A\right>$, with $A = (3,1)$. Denote by $K$ the real quadratic field $\Qbb(\alpha)$ with $\alpha \in \Rbb$ such that $\alpha^2 + \alpha - 1 = 0$. Let $B_1 = (6\alpha+9, -24\alpha - 35)$, $B_2 = (-4\alpha +1, 12\alpha - 3)$, $B_3  = (\frac{9}{5}, \frac{24\alpha - 23}{25})$, $B_4 = (2, -\alpha -2)$, $B_5 = (-6\alpha + 3, -18\alpha+7)$, $B_6  = (4\alpha + 5, 8\alpha + 9)$ and we denote by $(B_i)$ the divisor in $E$ corresponding to the point $B_i$. We have the following divisors in $E/K$
\begin{align*}
    &\div(x) = (4A) + (B_1) - (A) - (B_2), \\
    &\div(y) = (\Ocal) + (B_3) - (B_4) -(3A), \\
    &\div(1+x+y) = 2(2A) +2(B_5) -(A) -(B_4) -(3A) -(B_2), \\
    &\div(1+1/x+1/y) = -(\Ocal) +2(5A) +2(B_6) -(B_3) -(4A) -(B_1).
\end{align*}
Note that the Bloch group of real quadratic fields is trivial after tensoring with $\Qbb$. Therefore, the residues $u_{B_i}$ are trivial because they are elements of $\Bcal(\Qbb(\alpha))$, the Bloch group (tensored with $\Qbb$) of the real quadratic field $\Qbb(\alpha)$. The remaining points in the supports of the above divisors are of the form $mA$ for $m = 1, \dots, 6$. Hence we can choose $N$ in Theorem \ref{0} as the order of $A$ which equals 6. As the points $mA$ have rational coordinates and the functions $f_i$ have rational coefficients, the Bloch-Wigner dilogarithmic values at $u_{mA}$ in \eqref{mainid} all vanish.   We then get identity \eqref{391} under Beilinson's conjecture for genus 1 curves \ref{BeiConj}. 
\bigskip

(g) Theorem \ref{0} does not imply identity \eqref{counterex}
\begin{equation*}
    \m((1 + x)(1 + y) + (1 - x- y)z) \stackrel{?}{\sim}_{\Qbb^\times} L'(E_{450},-1).
\end{equation*}
We have the following decomposition
\begin{align*}
    x \wedge y \wedge z &= - x \wedge (1+x) \wedge y + y \wedge (1+y) \wedge x + (x+y) \wedge (1-x-y) \wedge x  \\
    &\quad - (x+y) \wedge (1-x-y) \wedge y  - \dfrac{x}{y} \wedge \left(1+\dfrac{x}{y}\right) \wedge (1 -x-y).
\end{align*}
Therefore, we have 
\begin{equation*}
    f_1 = -x, \quad f_2 = -y, \quad f_3 = f_4 = x+y, \quad f_5 = -x/y
\end{equation*}
\begin{equation*}
    g_1 = g_4 =  y, \quad g_2 = g_3 = x, \quad g_5 = 1-x-y.
\end{equation*}
The Maillot variety is given by  
\begin{equation*}
    (x^2 + 3x)y^2 + (3x^2 + x + 3)y + 3x + 1 = 0.
\end{equation*}
By the following change of variables 
\begin{equation*}
    x = -\dfrac{3}{(X^2 - 9X)Y} + \dfrac{(-X^2 + 3X - 27)}{X^2 - 9X}, \quad
 y = \dfrac{3}{(X^2 - 27X + 162)Y} + \dfrac{(2X^2 - 21X - 27)}{X^2 - 27X + 162},
\end{equation*}
we get the following elliptic curve
\begin{equation*}
     E/\Qbb : \quad  Y^2 + XY = X^3 - X^2 - 27X + 81,
\end{equation*}
which is of conductor 450. Its torsion group is isomorphic to $\Zbb/2$ and generated by $A = (-6 , 3)$. We have the following divisors in $E$
\begin{equation*}
\begin{aligned}
    &\div (x) = -(P_1) + (P_2) -(P_3) + (P_4),\\
    &\div (y) = (P_3) + (P_5) - (P_6) - (P_2),\\
    &\div (1-x-y) =  2 (\Ocal)  + 2 (P_7) - (P_6) - (P_1) - (P_2) - (P_3),\\
    &\div(1-1/x-1/y) = 2 (P_8) + 2(A) - (P_2) - (P_3) - (P_4) - (P_5),
\end{aligned}
\end{equation*}
where  
\begin{equation*}
    \begin{aligned}
        &P_1 : = (9 , 18), & &P_2 := (9 , -27),& & P_3 := (0 , 9), &&
        P_4 := (0 , -9),\\
        &P_5 := \left(-\dfrac{9}{4} , -\dfrac{81}{8}\right), & & P_6:= (18, 63) &&
        P_7 := (4, 3), & & P_8 := (3, -6). &&
    \end{aligned}
\end{equation*}
We have that the residue 
\begin{equation*}   
    u_{P_1} := \sum_{j=1}^5 v_{P_1} (g_j) \{f_j(P_1)\}_2  + v_{P_1}(g_j \circ \tau) \{f_j \circ \tau(P_1)\}_2 = -3 \{3\}_2
\end{equation*}
is nontrivial in the Bloch group $\Bcal(\Qbb)$. Since $P_1$ is torsion-free, it violates the finite order condition of Theorem \ref{0}.

\subsection{Identities with Dirichlet characters}\label{ly} In this section, we study Mahler measure identities of the form 
\begin{equation*}
    \m(P) \stackrel{?}{=}  a \cdot L'(E, -1) + \sum_\chi b_\chi \cdot L'(\chi, -1),
\end{equation*}
where $a \in \Qbb, b_\chi \in \Qbb^\times $, $E$ is an elliptic curve and $\chi$ are odd quadratic Dirichlet characters.
\bigskip

a) We prove the first identity of Table \eqref{mix1} conditionally on Beilinson's conjecture \ref{BeiConj}. The polynomial $P = 1 + (x^2 - x + 1)y + (x^2 + x + 1)z$ is of the form \eqref{exact}. We have the following decomposition on $V_P$
\begin{equation*}
\begin{aligned}
     x \wedge y \wedge z  &= -\frac{1}{3}x^3 \wedge (1-x^3) \wedge y \  +\  x \wedge (1-x) \wedge y \ +\  (x^2 - x +1)y \wedge (1+ (x^2-x+1)y) \wedge x \\
     &\quad - \frac{1}{3}x^3 \wedge (1+ x^3) \wedge (1+(x^2-x+1)y) \ + \ x \wedge (1+x) \wedge (1+(x^2-x+1)y).
\end{aligned}
\end{equation*}
We have 
\begin{equation*}
    f_1 = x^3, \ f_2 = x, \ f_3 = -(x^2 - x + 1)y, \ f_4 = -x^3, f_5 = -x,
\end{equation*}
\begin{equation*}
    g_1 = g_2 = y, \ g_3 = x, \ g_4 = g_5 = 1 + (x^2 - x +1)y.
\end{equation*}
The curve $W_P$ is given by $ x^2(x^2 - x + 1) y^2 - x(4x^2 -x + 4) y + x^2 - x + 1 = 0,$
which is a non-singular curve of genus 1 and does not contain any rational point. By the change of variables $ x = X, y = Y/X,$ we get a new equation
\begin{equation*}
     (X^2 - X + 1) Y^2 - (4X^2 - X + 4)Y + X^2 - X + 1 = 0.
\end{equation*}
Using Pari/GP \cite{PARI}, one gets the following Weierstrass form for the Jacobian of $W_P$
\begin{equation*}
    E/\Qbb : v^2 + uv = u^3 - u^2 -45u -104,
\end{equation*}
which is an elliptic of type $45a2$.  We denote by $k = \Qbb(\alpha)$ with $\alpha^2 - \alpha +1 = 0$. A base change of $E$ over $k$ can be given by 
\begin{equation*}
   E_k : V^2 + 3UV + 3V = U^3 - U^2 - 9U,
\end{equation*}
by using the following change of variables
\begin{align*}
    x &= \dfrac{(2-\alpha )V + \alpha U^2  - 3(\alpha -1)U + 3}{U^2 + (4\alpha - 2)U - 3\alpha}, \\
    y &= \dfrac{((1-\alpha)U^2 -(\alpha + 4)U + (\alpha + 4))V + 2\alpha U^3 - (8\alpha - 3)U^2 + (3\alpha - 12)U + 12}{U^4 - (4\alpha + 1)U^3 + (15\alpha - 7)U^2 - (17\alpha - 13)U + 6(\alpha - 1)}.
\end{align*}
The torsion subgroup of $E_k$ is $\Zbb/2\Zbb \times \Zbb /4\Zbb = \left<A\right> \times \left<B\right>$ with  $ A = (-3,3)$ and $B = (0,0)$. Let $K$ be the number field $\Qbb (\alpha, r, s)$ with 
\begin{equation*}
    r^2  - 2(2\alpha - 1)r + 3(\alpha - 1) = 0, \text{ and } s^2 + 2(2\alpha - 1)s - 3\alpha = 0.
\end{equation*}
We set $P_1 = (r, \alpha r - 2 \alpha -2), P_2 = (s, (1-\alpha)s + 2 \alpha - 4)$, which are points in $E(K)$. We denote by $(P_i)$ the divisor corresponding to $P_i$ in $E_k$. Note that the divisors $(P_i)$ have degree 2 on $E_k$. Using Magma, one obtain the following divisors in $E_k$
\begin{align*}
    &\div (g_3) = \div(x) = (P_1)- (P_2),\\
    &\div(g_1) = \div(g_2) = \div(y) = (\Ocal) + (A+3B) -(A+B) + (P_2) - (2B) - (P_1) ,\\
    &\div (g_4)=\div(g_5) = \div(1+(x^2-x+1)y) = 2(3B) +2(A) - (P_1) -(P_2) ,\\
    &\div(g_4 \circ \tau) = \div(g_5 \circ \tau) = \div(1+(1/x^2-1/x+1)(1/y)) = 2(B) + 2(A+2B) - (P_1) - (P_2). 
\end{align*}
The values of $f_j$ and $f_j \circ \tau$ at $P_1, P_2$ and their conjugates are either 0 or $\infty$, so we are only concerned with the other points. We obtain that
\begin{align*}
& u_A & & = v_A(g_4) \{f_4(A)\}_2 + v_A(g_5) \{f_5(A)\}_2 = \{-1\}_2 + \{1/ \alpha\}_2 = -\{\alpha\}_2,\\
\\
&u_B  &  &= v_B(g_4 \circ \tau) \{f_4\circ \tau (B)\}_2 + v_B(g_5\circ \tau) \{f_5\circ \tau (B)\}_2 =   2\{-1\}_2 +2\{\alpha\}_2 = 2\{\alpha\}_2,\\
\\
&u_{2B} & &= v_{2B}(g_1)\{f_1(2B)\}_2 + v_{2B}(g_1\circ \tau) \{f_1 \circ \tau (2B)\}_2 + v_{2B}(g_2)\{f_2(2B)\}_2 + v_{2B}(g_2\circ \tau) \{f_2 \circ \tau (2B)\}_2  \\
   & & &= -\{-1\}_2 + \{-1\}_2 - \{1/\alpha\}_2 + \{\alpha\}_2 = 2 \{\alpha\}_2,\\
   \\
&u_{3B}  & &= v_{3B}(g_4)\{f_4(3B)\}_2 + v_{3B}(g_5) \{f_5(3B)\}_2 = 2\{-1\}_2 + 2 \{\alpha\}_2 = 2 \{\alpha\}_2,\\
\\
&u_{A+B} & &= v_{A+B}(g_1) \{f_1(A+B)\}_2 + v_{A+B} (g_1 \circ \tau) \{f_1 \circ \tau (A+B)\}_2\\    
   & & & \quad + v_{A+B}(g_2) \{f_2(A+B)\}_2 + v_{A+B} (g_2 \circ \tau) \{f_2 \circ \tau (A+B)\}_2 \\
   & & &= -\{-1\}_2 + \{-1\}_2 - \{\alpha\}_2 + \{1/\alpha\}_2 = -2 \{\alpha\}_2,\\
   \\
&u_{A+2B} & &= v_{A+2B}(g_4 \circ \tau) \{f_4 \circ \tau(A+2B)\}_2 = 2 \{-1\}_2 + 2 \{1/\alpha\}_2  = -2\{\alpha\}_2,\\
\\
&u_{A+3B} & &=  v_{A+3B}(g_1)\{f_1(A+3B)\}_2 + v_{A+3B}(g_1\circ \tau) \{f_1 \circ \tau (A+3B)\}_2\\
    & & &\quad +  v_{A+3B}(g_2)\{f_2(A+3B)\}_2 + v_{A+3B}(g_2\circ \tau) \{f_2 \circ \tau (A+3B)\}_2\\
    & & &= \{-1\}_2 - \{-1\}_2 + \{1/\alpha\}_2 - \{\alpha\}_2 = -2 \{\alpha\}_2,
\end{align*}
which are all nontrivial in $B_2(K)$. Notice that $P_1, P_2$ have order 8 in $E(K)$ and all the other points belong to the torsion subgroup of $E_k$, whose cardinality equals 8, hence we choose $N$ in Theorem \ref{0} equal to 8.  Figure \ref{fig:4} indicates the Deninger chain (the shaded region) and its boundary in polar coordinates $x=e^{it}, y= e^{is}$ for $s, t \in [-\pi, \pi]$.
\begin{figure}[h!]
    \centering
    \includegraphics[width=6cm]{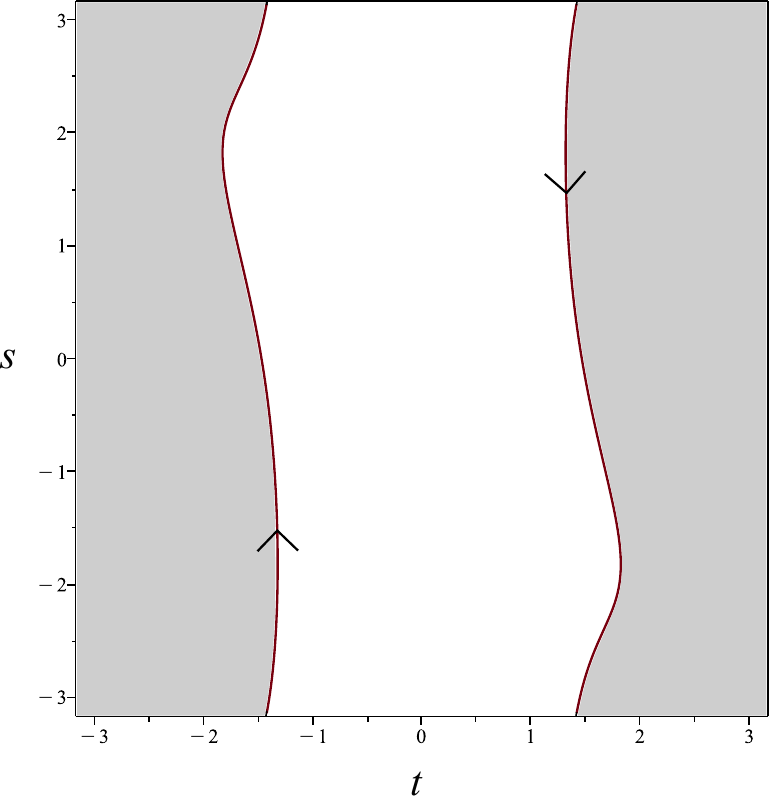}
    \caption{The Deninger chain $\Gamma$.}
    \label{fig:4}
\end{figure}
The boundary $\partial \Gamma$ consists of 2 loops, which do not contain any points in the supports of the above divisors. By equation \eqref{4.9}, we have
$\m(P) = -\frac{1}{8\pi^2} \int_{\partial \Gamma} \rho(\lambda),$ so $\partial \Gamma$ must be nontrivial as otherwise $m(P)$ vanishes. Hence $\partial \Gamma$ defines a generator of $H_1(C(\Cbb), \Zbb)^+$. Then by Theorem \ref{0}, under Belinson's conjecture, we have
\begin{equation*}
    \m(1+ (x^2 - x + 1)y + (x^2 + x + 1)z) \stackrel{?}{=} a \cdot L'(E_{45}, -1) +  \dfrac{b}{32\pi} \cdot D(\alpha), \ a \in 
    \Qbb^\times, b \in \Zbb\setminus \{0\}.
\end{equation*}
We are unable to determine the coefficient $b$ as computing the integrals $\int_{\partial \Gamma} \ d \arg f_p$ for $p \in S$ is difficult.  By Remark \ref{6.22}, we have 
\begin{equation*}
    D(\alpha) = \dfrac{3\sqrt{3}}{4} L(\chi_{-3}, 2) = \pi L'(\chi_{-3}, -1).
\end{equation*}
Finally, we get 
\begin{equation*}
    \m(1+ (x^2 - x + 1)y + (x^2 + x + 1)z) \stackrel{?}{=} a \cdot L'(E_{45}, -1) + \dfrac{b}{32} \cdot L'(\chi_{-3}, -1), \ a \in 
    \Qbb^\times, b \in \Zbb\setminus \{0\}.
\end{equation*}

b) Using a method of Lalín \cite[Section 4.2]{Lal15}, we prove without assuming Beilinson's conjecture identity (6) of Table \ref{table:3}, which involves only the $L$-function of the Dirichlet character $\chi_{-4}$ 
\begin{equation*}
    \m(x^2 + 1 + (x+1)^2y + (x-1)^2z) = 2 L'(\chi_{-4}, -1).
\end{equation*}
We have $\m(\tilde P) = 0$. We have the following decomposition on $V_P$
\begin{align*}
    x \wedge y \wedge z &= -\dfrac{1}{2} x^2 \wedge (1+x^2) \wedge y + 2 x \wedge (1-x) \wedge y + x \wedge \dfrac{(x+1)^2y}{x^2 +1} \wedge \left(1 + \dfrac{(x+1)^2 y}{x^2+1}\right)\\
    &\quad - 2x \wedge (1+x) \wedge \left(1 + \dfrac{(x+1)^2 y}{x^2+1}\right) + \dfrac{1}{2} x^2 \wedge (1+x^2) \wedge \left(1 + \dfrac{(x+1)^2 y}{x^2+1}\right).
\end{align*}
We have 
\begin{align*}
    \rho(\xi)  &= -\dfrac{1}{2}\rho(-x^2, y) + 2\rho(x, y) + \rho\left(\dfrac{-(x+1)^2y}{x^2+1}, x\right) - 2 \rho\left(-x, 1 + \dfrac{(x+1)^2 y}{x^2+1}\right) + \dfrac{1}{2} \rho\left(-x^2, 1 + \dfrac{(x+1)^2 y}{x^2+1}\right),
\end{align*}
where
\begin{equation*}
    \rho(f, g) =  -D(f) d \arg g + \dfrac{1}{3} \log |g| (\log |1-f| d \log|f| - \log |f| d\log |1-f|).
\end{equation*}
We have that $W_P$ is given by 
\begin{equation*}
    (x^2 + 1)((x + 1)^2y^2 + (x^2 + 8x + 1)y + (x+1)^2) = 0,
\end{equation*}
which is the union of $L : x^2 + 1 = 0$ and the curve $C : (x + 1)^2y^2 + (x^2 + 8x + 1)y + (x+1)^2 = 0$. The figure below describes the Deninger chain $\Gamma$ in polar coordinates
\begin{equation*}
    \Gamma : \left|\dfrac{x^2+ 1+(x+1)^2y}{(x-1)^2}\right| \ge 1, x = e^{it}, y= e^{is}, s, t\in [-\pi, \pi].
\end{equation*}

\begin{figure}[h!]
    \centering
    \includegraphics[width=10cm]{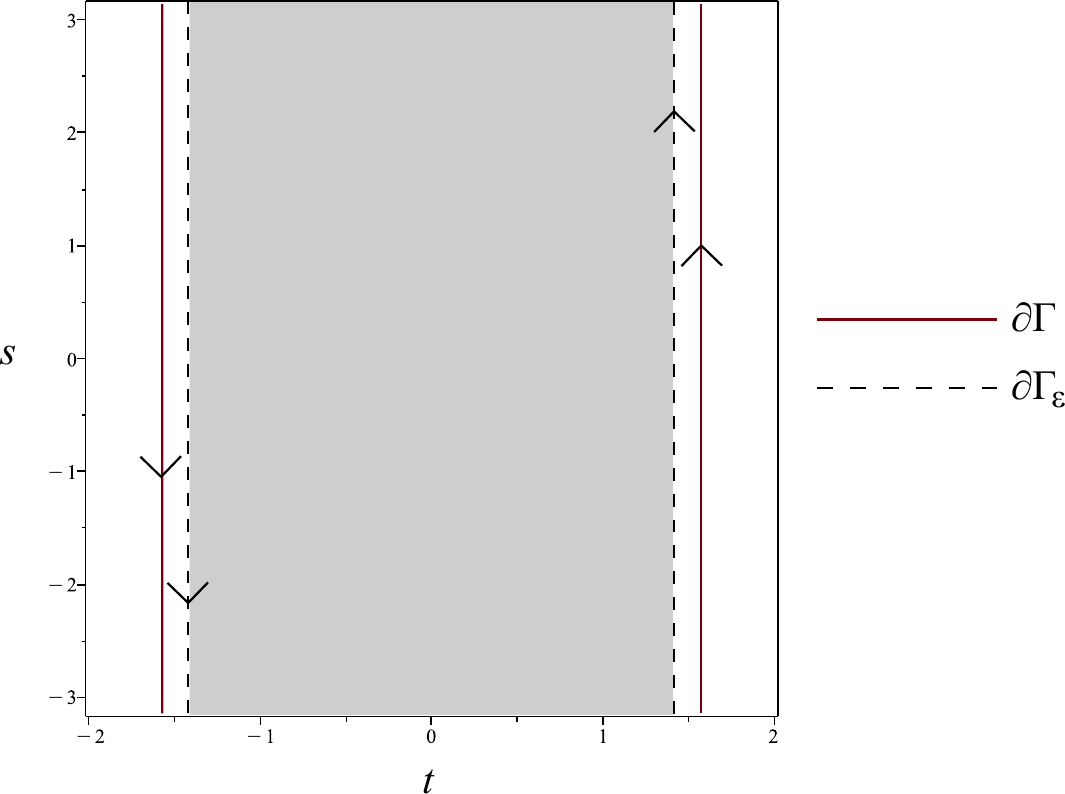}
    \caption{The integration domain.}
    \label{fig:5}
\end{figure}

\noindent Its boundary $\partial \Gamma$ consists of 2 loops 
$\gamma = \{t = \pi/2, -\pi \le s \le \pi\} \text{ and } \delta = \{t = -\pi/2, -\pi \le s \le \pi\}$ (with orientations as shown in the figure), which are contained in $L$. As $\partial \Gamma$ contains poles of $\rho(\xi)$, we do not have (\ref{good}) directly. We adjust the Deninger chain as follows, for $\varepsilon > 0$
\begin{equation*}
    \Gamma_\varepsilon :   \left|\dfrac{x^2+ 1+(x+1)^2y}{(x-1)^2}\right| \ge 1, x =  e^{i(1+\varepsilon)t}, y=e^{is}, \text{ for } s, t \in [-\pi, \pi],
\end{equation*}
which is the shaded region in Figure \ref{fig:5} with the boundary $\partial \Gamma_\varepsilon = \gamma_\varepsilon \cup \delta_\varepsilon$, where
\begin{equation*}
    \gamma_\varepsilon = \{t = \dfrac{\pi}{ 2(1+\varepsilon)}, -\pi \le s \le \pi\}, \quad \delta_\varepsilon = \{t = -\dfrac{\pi}{2(1 + \varepsilon)}, -\pi \le s \le \pi\}.
\end{equation*} 
We consider the differential forms $\eta$ and $\rho(\lambda)$ defined in \eqref{84} and Definition \ref{xiandxi*}, respectively. We have
\begin{equation}\label{150}
\int_{\Gamma_\varepsilon} \eta = \int_{\partial \Gamma_\varepsilon} \rho(\xi) = \dfrac{1}{2} \int_{\partial \Gamma_\varepsilon} \rho(\lambda),
\end{equation}
where the first equality is obtained by using Stokes's theorem and the second equality can be proved similarly as the proof of Lemma \ref{90}. As $\rho(\lambda)$ is a closed differential form, we can take the limit of equation (\ref{150}) as $\varepsilon \to 0$ without changing the value of the integration, and so that
\begin{equation*}
    \m(P) =   -\dfrac{1}{4\pi^2} \lim_{\varepsilon \to 0} \int_{\partial \Gamma_\varepsilon} 
     \rho(\xi).
\end{equation*}
We have
\begin{equation*}
\begin{aligned}
 \int_{\partial \Gamma_\varepsilon} \rho(\xi) 
    &= 
 \int_{\partial \Gamma_\varepsilon} -\dfrac{1}{2}\rho(-x^2, y) + 2\rho(x, y) + \rho\left(\dfrac{-(x+1)^2y}{x^2+1}, x\right) \\
    &\hspace{2cm} - 2 \rho\left(-x, 1 + \dfrac{(x+1)^2 y}{x^2+1}\right) + \dfrac{1}{2} \rho\left(-x^2, 1 + \dfrac{(x+1)^2 y}{x^2+1}\right)\\
    &= \int_{\gamma_\varepsilon \cup \delta_\varepsilon} 2 \rho(x, y) - 2 \rho\left(-x, 1 + \dfrac{(x+1)^2y}{x^2+1}\right) \\
    &= \int_{\gamma_\varepsilon \cup \delta_\varepsilon} -2 D(x) d \arg(y) + 2 D(-x) d\arg \left( 1 + \dfrac{(x+1)^2y}{x^2+1}\right)  \\
    &=  \left(-2 D\left(e^{\frac{i\pi }{2(1+\varepsilon)}}\right) \int_{\gamma_\varepsilon} d\arg (y) - 2 D\left(e^{-\frac{i\pi }{2(1+\varepsilon)}}\right) \int_{\delta_\varepsilon} d\arg (y) \right)\\
    &\quad + \left( 2 D\left(-e^{\frac{i\pi}{2(1+\varepsilon)}}\right) 
    \int_{\gamma_\varepsilon}  d\arg \left( 1 + \dfrac{(x+1)^2y}{x^2+1}\right)\right) + 2 D\left(-e^{-\frac{i\pi}{2(1+\varepsilon)}}\right) 
    \int_{\delta_\varepsilon}  d\arg \left( 1 + \dfrac{(x+1)^2y}{x^2+1}\right). \\
\end{aligned}
\end{equation*}
We have
 \begin{align*}
\int_{\gamma_\varepsilon} d\arg (y) = \int_{-\pi}^\pi ds = 2\pi, \quad \int_{\delta_\varepsilon} d \arg y  = \int_{\pi}^{-\pi} ds = -2 \pi. 
 \end{align*}
And 
 \begin{align*}
 \int_{\gamma_\varepsilon} d \arg \left(1+ \dfrac{(x+1)^2y}{x^2+1}\right)   = 2\pi, \quad   \int_{\delta_\varepsilon} d \arg \left(1+ \dfrac{(x+1)^2y}{x^2+1}\right)   = -2\pi,
 \end{align*}
by looking at Figure \ref{argument} and the fact that $\left|\frac{(x+1)^2}{x^2+1}\right| >1$.
\begin{figure}[h!]
    \centering
    \includegraphics[width=7.5cm]{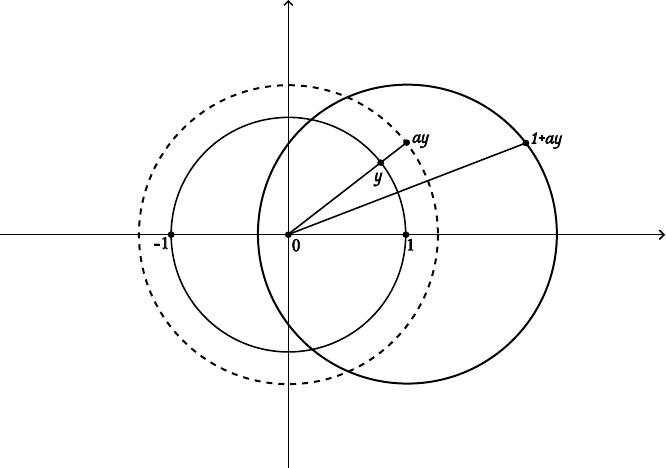}
    \caption{The argument of $1+ay$ with $|a| >1$.}
    \label{argument}
\end{figure}
Then we have  $ \lim_{\varepsilon \to 0} \int_{\partial \Gamma_\varepsilon} \rho(\xi) = -16 \pi D(e^{i\pi/2})$.
 Hence 
\begin{align*}
    \m(P) = \dfrac{4}{\pi}D(e^{i\pi/2})
     = 2L'(\chi_{-4},-1).
\end{align*}
The same arguments apply to identities (4), (5), (7), and (8) of Table \ref{table:3}.
\bigskip

c) Let us study identity (1) of Table \ref{table:3}, which involves only the $L$-function of the Dirichlet character $\chi_{-3}$
\begin{equation*}
    \m(1+ (x+1)(x^2+x+1)y +(x+1)^3 z) = 3 L'(\chi_{-3},-1).
\end{equation*}
We have that $W_P$ is given by $ (x^2 + x + 1)((x^4 + x^3)y^2 + (-2x^3 - 5x^2 - 2x)y + x + 1) = 0$, which is the union of the line $L : x^2 + x + 1 = 0$ and the curve $C : (x^4 + x^3)y^2 + (-2x^3 - 5x^2 - 2x)y + x + 1 = 0$. Figure \ref{fig:7} describes the Deninger chain in local coordinates $x= e^{it}$ and $y = e^{is}$ for $s, t \in [-\pi, \pi]$.  The boundary $\partial \Gamma = \gamma \cup \delta$, where
\begin{equation*}
    \gamma = \{t = 2\pi/3, -\pi \le s \le \pi\} \text { and } \delta = \{t = -2\pi/3, -\pi \le s \le \pi\},
\end{equation*}
which are both contained in $L$.
\begin{figure}[h!]
    \centering
    \includegraphics[width=10cm]{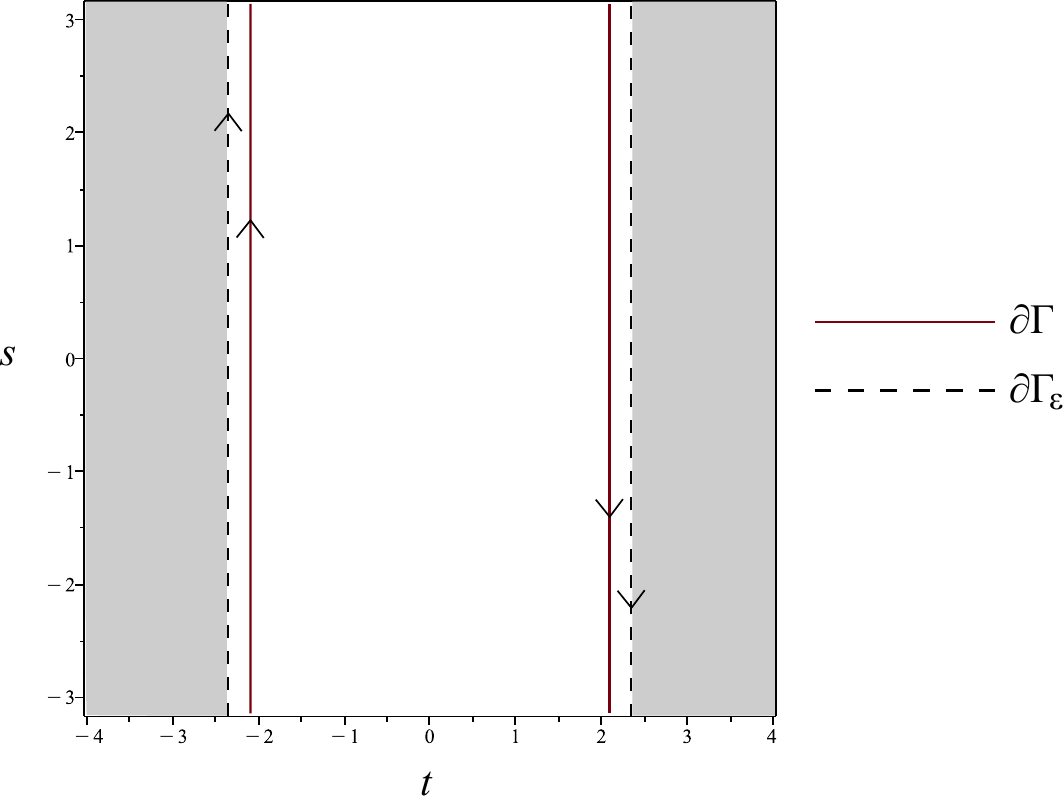}
    \caption{The integration domain.}
    \label{fig:7}
\end{figure}
The differential form $\rho(\xi)$ is again not well-defined on $\partial \Gamma$. So we adjust the Deninger chain to get $\Gamma_\varepsilon$ (see the shaded region). By similar computation as in Example (b), we have
\begin{equation*}
    \m(P) = -\dfrac{1}{4\pi^2} \lim_{\varepsilon \to 0} \int_{\partial \Gamma_\varepsilon} \rho(\xi)  = 3 L'(\chi_{-3},-1).
\end{equation*}
One can do similarly with identities (2) and (3) of Table \ref{table:3}.
\bigskip

d)  Theorem \ref{0} does not apply to identity \eqref{mix3}
\begin{equation*}
    \m(x^2 + x + 1 + (x^2 + x + 1)y + (x - 1)^2) \stackrel{?}{=} -\dfrac{1}{12} L'(E_{72},-1) + \dfrac{3}{2} L'(\chi_3, -1),
\end{equation*}
because the boundary $\partial \Gamma$ passes the singular point $(1, -1)$ of $W_P$ (see Figure \ref{fig:9}) and $\partial \Gamma$ is no longer a loop in the normalization of $W_P$.
\begin{figure}[h!]
    \centering
    \includegraphics[width=6cm]{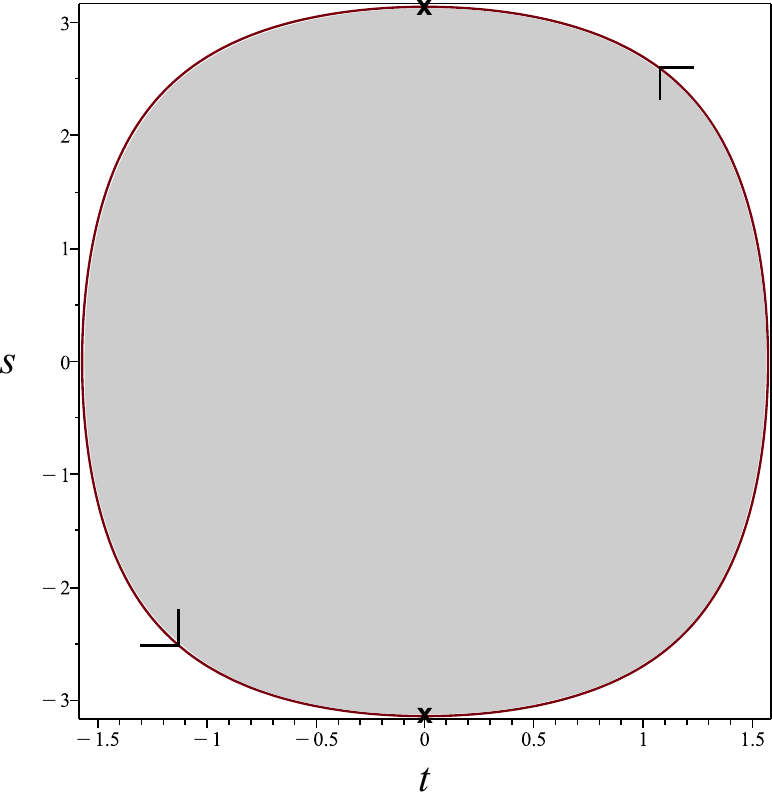}
    \caption{The Deninger chain $\Gamma$.}
    \label{fig:9}
\end{figure}

\bigskip

e) We prove the second identity of Table \ref{mix1}, under Beilinson's conjecture for genus 1 curves \ref{BeiConj}
\begin{equation}\label{395}
    \m(x^2+1 +(x+1)^2 y + (x^2 -1)z) \stackrel{?}{=}  -\dfrac{1}{10} L'(E_{48},-1) + L'(\chi_{-4}, -1),
\end{equation}
We have
\begin{align*}
    x \wedge y \wedge z &= -\dfrac{1}{2} x^2 \wedge (1+x^2) \wedge y \ + \ \dfrac{1}{2} x^2 \wedge (1-x^2) \wedge y \ +  \ \dfrac{(x+1)^2y}{x^2+1} \wedge\left(1 + \dfrac{(x+1)^2y}{x^2+1}\right) \wedge  x\\
    &\quad -2x \wedge (1+x) \wedge \left(1 + \dfrac{(x+1)^2y}{x^2+1}\right) \  + \ \dfrac{1}{2} x^2 \wedge (1+x^2) \wedge\left(1 + \dfrac{(x+1)^2y}{x^2+1}\right).
\end{align*}
Then
\begin{equation*}
\begin{aligned}
      \rho(\xi) = -\dfrac{1}{2} \rho(-x^2, y) + \dfrac{1}{2} \rho(x^2, y) + &\rho\left(-\dfrac{(x+1)^2y}{x^2+1}, x\right) \\
    &- 2 \rho \left(-x, 1+ \dfrac{(x+1)^2y}{x^2+1}\right) + \dfrac{1}{2}  \rho \left(-x^2, 1+ \dfrac{(x+1)^2y}{x^2+1}\right).
\end{aligned}
\end{equation*}
The $W_P$ is given by 
\begin{equation*}
    (x^2+1)((x+1)^2 y^2 + (3x^2 +4x +3)y + (x+1)^2) = 0,
\end{equation*}
which is the union of  $L : x^2 + 1 = 0$ and the curve $C : (x+1)^2 y^2 + (3x^2 +4x +3)y + (x+1)^2 =0,$ which is a nonsingular curve of genus 1. Figure \ref{fig:2} describes the Deninger chain $\Gamma$ and its boundary $\partial \Gamma$ in polar coordinates $x=e^{it}$ and $y = e^{is}$ for  $t, s \in [-\pi, \pi]$.
\begin{figure}[h!]
    \centering
\includegraphics[width=10cm]{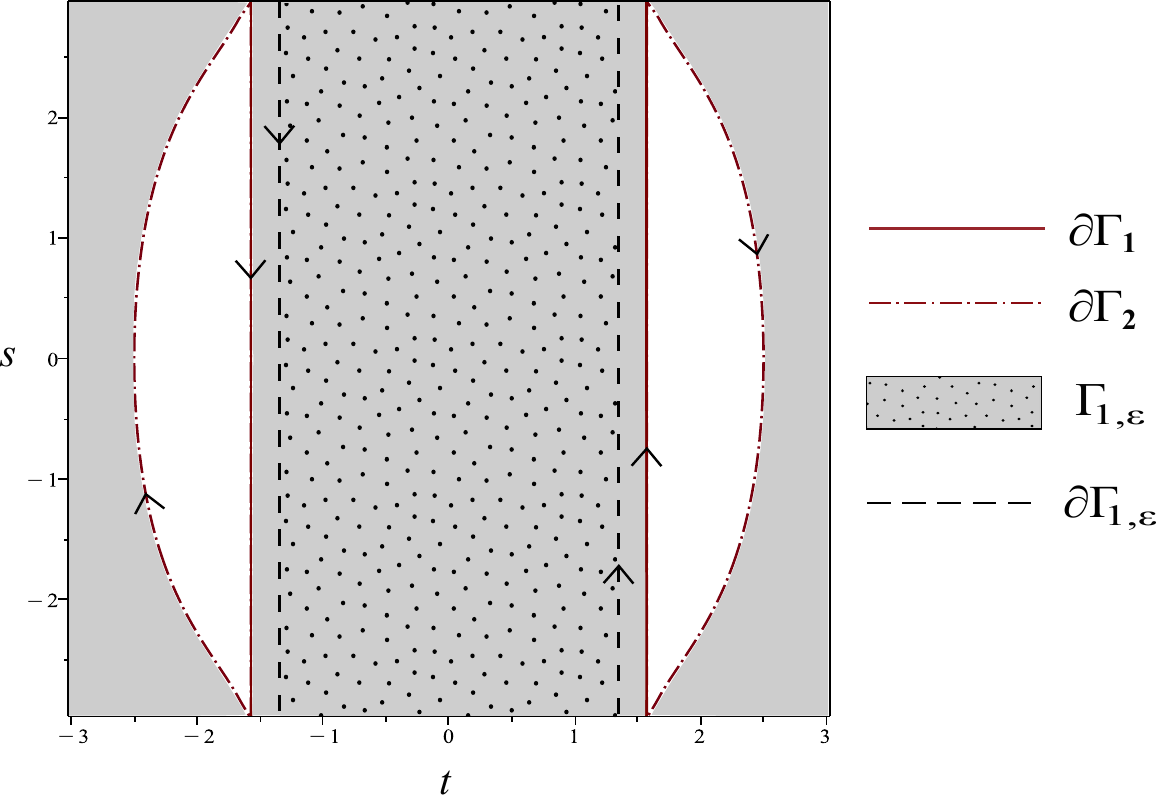}
    \caption{ The Deninger chain $\Gamma$ corresponding to \eqref{395}.}
    \label{fig:2}
\end{figure}
We have $\Gamma = \Gamma_1 \cup \Gamma_2$, where $\Gamma_1$ is the shaded region in the center with the boundary 
\begin{equation*}
    \partial \Gamma_1 = \{t = - \pi/2, -\pi\le s\le \pi\} \cup \{t = \pi/2, -\pi\le s\le \pi\}, 
\end{equation*}
and $\Gamma_2$ is the shaded region with the boundary $\partial \Gamma_2$ as in the figure. We observe that $\partial \Gamma_1$ is contained in $L$ and  $\partial \Gamma_2$ is contained in $C$. We have
\begin{align*}
    \m(P) = m_1 + m_2,
\end{align*}
where $m_1$ can be computed by the same method as the example (b). Let $\Gamma_{1, \epsilon}$ be the adjustment of $\Gamma_1$ as in Figure \ref{fig:2}. We have 
\begin{align*}
    m_1 =  -\dfrac{1}{4 \pi^2} \int_{\Gamma_1} \eta = -\dfrac{1}{4 \pi^2} \lim_{\varepsilon\to 0} \int_{\Gamma_{1,\varepsilon}} \eta = -\dfrac{1}{4 \pi^2} \lim_{\varepsilon \to 0} \int_{\partial \Gamma_{1, \varepsilon}} \rho(\xi) = L'(\chi_{-4},-1),
\end{align*}
and 
\begin{align*}
    m_2 =  - \dfrac{1}{4\pi^2} \int_{\Gamma_2} \eta = - \dfrac{1}{4\pi^2} \int_{\partial\Gamma_2} \rho(\xi).
\end{align*}
By the following change of variables
\begin{equation*}
    x= -\dfrac{2Y + X^2}{X^2 - 2X - 4}, \quad 
 y = -\dfrac{2}{X + 2},
\end{equation*}
the Jacobian of $C$ is given by 
\begin{equation*}
    E/ \Qbb : Y^2 = X^3 + X^2 - 4X - 4,
\end{equation*}
which the elliptic curve of type $48a1$. Its torsion subgroup is $\Zbb/2\Zbb \times \Zbb/2 \Zbb = \left<A\right> \times \left<B \right>$, where $A = (2, 0)$ and $B = (-1,0)$. Set $K = \Qbb(\alpha, \beta)$ where $\alpha^2 - 2 \alpha -4 = 0$ and $\beta^2 +4  = 0$. Let us write
\begin{equation*}
    P_1 = (\alpha, \alpha+2), P_2 = (\alpha, -\alpha -2), P_3 = (0, s, 1).
\end{equation*}
We have 
\begin{align*}
    &\div(x)  = -(P_1) + (P_2),
    &\div\left(\frac{1+(1+x)^2y}{x^2+1}\right) &= 2(A) + 2(A+B) -2(P_3),\\
    &\div(y) =  2(\Ocal) - 2(A+B), &\div\left(\frac{x^2y + x^2 + 2x + y + 1}{y(x^2 + 1)}\right) &= 2(\Ocal) +2(B) -2(P_3).
\end{align*}
We have $u_A = u_B = u_{A+B} = 0$ and $ u_{P_3} = -2 \{-\beta/2\}_2 -2 \{\beta/2\}_2 = 0.$  The residues $u_{P_i}$ for $i=1,2$ are trivial because they belong to $\Bcal (\Qbb(\alpha))$, the Bloch group (tensored with $\Qbb$) of the real quadratic field $\Qbb(\alpha)$.  Therefore, we have the following identity under Beilinson's conjecture \ref{BeiConj}
\begin{equation*}
    m_2 = a \cdot  L'(E_{48}, -1), \quad a \in \Qbb^\times.
\end{equation*}
In conclusion, we obtain the following identity under Beilinson's conjecture \ref{BeiConj}
\begin{equation*}
    \m(P) = L'(\chi_{-4}, -1) + m_2 = a \cdot L'(E_{48}, -1) + L'(\chi_{-4}, -1), \quad a \in \Qbb^\times.
\end{equation*}
\newpage
\fontsize{10pt}{10pt}\selectfont
\printbibliography

@book {Blo10,
    AUTHOR = {Bloch, Spencer},
     TITLE = {Lectures on algebraic cycles},
    SERIES = {New Mathematical Monographs},
    VOLUME = {16},
   EDITION = {Second},
 PUBLISHER = {Cambridge University Press, Cambridge},
      YEAR = {2010},
     PAGES = {xxiv+130},
      ISBN = {978-0-521-11842-2},
   MRCLASS = {14C25 (14C15 14C35 14F42 14G10 19E15)},
  MRNUMBER = {2723320},
MRREVIEWER = {Claudio\ Pedrini},
       DOI = {10.1017/CBO9780511760693},
       URL = {},
}

@article {BCP,
    AUTHOR = {Bosma, Wieb and Cannon, John and Playoust, Catherine},
     TITLE = {The {M}agma algebra system. {I}. {T}he user language},
      NOTE = {Computational algebra and number theory (London, 1993)},
   JOURNAL = {J. Symbolic Comput.},
  FJOURNAL = {Journal of Symbolic Computation},
    VOLUME = {24},
      YEAR = {1997},
    NUMBER = {3-4},
     PAGES = {235--265},
      ISSN = {0747-7171},
   MRCLASS = {68Q40},
  MRNUMBER = {MR1484478},
       DOI = {10.1006/jsco.1996.0125},
       URL = {},
}

@article {Boy98,
    AUTHOR = {Boyd, David W.},
     TITLE = {Mahler's measure and special values of {$L$}-functions},
   JOURNAL = {Experiment. Math.},
  FJOURNAL = {Experimental Mathematics},
    VOLUME = {7},
      YEAR = {1998},
    NUMBER = {1},
     PAGES = {37--82},
      ISSN = {1058-6458,1944-950X},
   MRCLASS = {11G40 (11R06 11Y35)},
  MRNUMBER = {1618282},
MRREVIEWER = {Jan\ Nekov\'a\v r},
       URL = {},
}

@article {Boy06,
    AUTHOR = {Boyd, David W.},
     TITLE = {Conjectural explicit formulas for the Mahler measure of some three variable polynomials},
   JOURNAL = {},
  FJOURNAL = {},
    VOLUME = {},
    NOTE = {Unpublished notes},
      YEAR = {2006},
    NUMBER = {},
     PAGES = {},
      ISSN = {},
   MRCLASS = {},
  MRNUMBER = {},
MRREVIEWER = {},
       URL = {},
}

@article{Bur94,
    AUTHOR = {Burgos, Jos\'e{} Ignacio},
     TITLE = {A {$C^\infty$} logarithmic {D}olbeault complex},
   JOURNAL = {Compositio Math.},
  FJOURNAL = {Compositio Mathematica},
    VOLUME = {92},
      YEAR = {1994},
    NUMBER = {1},
     PAGES = {61--86},
      ISSN = {0010-437X,1570-5846},
   MRCLASS = {32S35 (32J25)},
  MRNUMBER = {1275721},
MRREVIEWER = {Aleksandr\ G.\ Aleksandrov},
       URL = {},
}

@article {Bur97,
    AUTHOR = {Burgos, Jose Ignacio},
     TITLE = {Arithmetic {C}how rings and {D}eligne-{B}eilinson cohomology},
   JOURNAL = {J. Algebraic Geom.},
  FJOURNAL = {Journal of Algebraic Geometry},
    VOLUME = {6},
      YEAR = {1997},
    NUMBER = {2},
     PAGES = {335--377},
      ISSN = {1056-3911,1534-7486},
   MRCLASS = {14G40 (14C15 14F99)},
  MRNUMBER = {1489119},
MRREVIEWER = {J\"org\ Jahnel},
}

@Article{Bru16,
 Author = {Brunault, Fran{\c{c}}ois},
 Title = {Regulators of {Siegel} units and applications},
 FJournal = {Journal of Number Theory},
 Journal = {J. Number Theory},
 ISSN = {0022-314X},
 Volume = {163},
 Pages = {542--569},
 Year = {2016},
 DOI = {10.1016/j.jnt.2015.12.019},
 Keywords = {19F27,11F03,11F11,11F67,11G16},
 zbMATH = {6547329},
 Zbl = {1344.19001}
}

@Article{Bru20,
 Author = {Brunault, Fran{\c{c}}ois},
 Title = {Unpublished list of conjectural identities for 3-variable Mahler measures},
 FJournal = {},
 Journal = {},
 ISSN = {},
 Volume = {},
 Pages = {},
 Year = {2020},
 Language = {},
 DOI = {},
 Keywords = {},
 zbMATH = {},
 Zbl = {}
}

@article {Bru22,
    AUTHOR = {Brunault, Fran{\c{c}}ois},
     TITLE = {On the $K_4$ group of modular curves},
   JOURNAL = {},
  FJOURNAL = {},
    VOLUME = {},
    NOTE ={\href{https://doi.org/10.48550/arXiv.2009.07614}{arXiv:2009.07614v2}},
      YEAR = {2022},
    NUMBER = {},
     PAGES = {},
      ISSN = {},
   MRCLASS = {},
  MRNUMBER = {},
MRREVIEWER = {},
}

@article {Bru23,
    AUTHOR = {Brunault, Fran{\c{c}}ois},
     TITLE = {On the Mahler measure of $(x+1)(y+1)+z$},
   JOURNAL = {},
  FJOURNAL = {},
    VOLUME = {},
    NOTE ={\href{https://doi.org/10.48550/arXiv.2305.02992}{arXiv:2305.02992}},
      YEAR = {2023},
    NUMBER = {},
     PAGES = {},
      ISSN = {},
   MRCLASS = {},
  MRNUMBER = {},
MRREVIEWER = {},
}

@Book{BZ20,
 Author = {Brunault, Fran{\c{c}}ois and Zudilin, Wadim},
 Title = {Many variations of {Mahler} measures. {A} lasting symphony},
 FSeries = {Australian Mathematical Society Lecture Series},
 Series = {Aust. Math. Soc. Lect. Ser.},
 Volume = {28},
 ISBN = {978-1-108-79445-9},
 Year = {2020},
 Publisher = {Cambridge: Cambridge University Press},
 DOI = {10.1017/9781108885553},
 Keywords = {11-01,11R09,11F67,11G50},
 zbMATH = {7169774},
 Zbl = {1476.11002}
}

@Article{dJ95,
 Author = {de Jeu, Rob},
 Title = {Zagier's conjecture and wedge complexes in algebraic {{\(K\)}}-theory},
 FJournal = {Compositio Mathematica},
 Journal = {Compos. Math.},
 ISSN = {0010-437X},
 Volume = {96},
 Number = {2},
 Pages = {197--247},
 Year = {1995},
 Keywords = {19F15,11R70,14C35,11R42},
 zbMATH = {810090},
 Zbl = {0868.19002}
}

@Article{dJ96,
 Author = {de Jeu, Rob},
 Title = {On {{\(K_ 4^{(3)}\)}} of curves over number fields},
 FJournal = {Inventiones Mathematicae},
 Journal = {Invent. Math.},
 ISSN = {0020-9910},
 Volume = {125},
 Number = {3},
 Pages = {523--556},
 Year = {1996},
 DOI = {10.1007/s002220050085},
 Keywords = {11R70,19F27},
 zbMATH = {931980},
 Zbl = {0864.11059}
}

@Article{dJ00,
 Author = {de Jeu, Rob},
 Title = {Towards regulator formulae for the {{\(K\)}}-theory of curves over number fields},
 FJournal = {Compositio Mathematica},
 Journal = {Compos. Math.},
 ISSN = {0010-437X},
 Volume = {124},
 Number = {2},
 Pages = {137--194},
 Year = {2000},
 DOI = {10.1023/A:1026440915009},
 Keywords = {19F27,11G30,19D45,19E08},
 zbMATH = {1563615},
 Zbl = {0985.19002}
}

@Article{Den97,
 Author = {Deninger, Christopher},
 Title = {Deligne periods of mixed motives, {{\(K\)}}-theory and the entropy of certain {{\(\mathbb Z^n\)}}-actions},
 FJournal = {Journal of the American Mathematical Society},
 Journal = {J. Am. Math. Soc.},
 ISSN = {0894-0347},
 Volume = {10},
 Number = {2},
 Pages = {259--281},
 Year = {1997},
  
 DOI = {10.1090/S0894-0347-97-00228-2},
 Keywords = {11G40,19E08,28D20,19F27},
 zbMATH = {1007809},
 Zbl = {0913.11027}
}

@Misc{DS91,
 Author = {Deninger, Christopher and Scholl, Anthony J.},
 Title = {The {Beilinson} conjectures},
 Year = {1991},
  
 HowPublished = {L-functions and arithmetic, {Proc}. {Symp}., {Durham}/{UK} 1989, {Lond}. {Math}. {Soc}. {Lect}. {Note} {Ser}. 153, 173-209 (1991).},
 Keywords = {14A10,14C35,14G10,14C25},
 zbMATH = {4204522},
 Zbl = {0729.14002}
}

@Article{Gon95,
 Author = {Goncharov, Alexander B.},
 Title = {Geometry of configurations, polylogarithms, and motivic cohomology},
 FJournal = {Advances in Mathematics},
 Journal = {Adv. Math.},
 ISSN = {0001-8708},
 Volume = {114},
 Number = {2},
 Pages = {197--318},
 Year = {1995},
  
 DOI = {10.1006/aima.1995.1045},
 Keywords = {19F27,14C35,14F20,11R42,11R70,33E20},
 URL = {},
 zbMATH = {814488},
 Zbl = {0863.19004}
}

@Article{Gon96,
 Author = {Goncharov, Alexander B.},
 Title = {Deninger's conjecture on {{\(L\)}}-functions of elliptic curves at {{\(s=3\)}}},
 FJournal = {Journal of Mathematical Sciences (New York)},
 Journal = {J. Math. Sci., New York},
 ISSN = {1072-3374},
 Volume = {81},
 Number = {3},
 Pages = {2631--2656},
 Year = {1996},
  
 DOI = {10.1007/BF02362333},
 Keywords = {11G40,14G10,19F27,11F67},
 zbMATH = {953754},
 Zbl = {0867.11048}
}

@InCollection{Gon98,
 Author = {Goncharov, Alexander B.},
 Title = {Explicit regulator maps on polylogarithmic motivic complexes},
 BookTitle = {Motives, polylogarithms and Hodge theory. Part I: Motives and polylogarithms. Papers from the International Press conference, Irvine, CA, USA, June 1998},
 ISBN = {1-57146-090-X},
 Pages = {245--276},
 Year = {1998},
 Publisher = {Somerville, MA: International Press},
  
 Keywords = {14F42,14C35,11R42,11G55,19E15,19F27},
 zbMATH = {2078168},
 Zbl = {1049.14012}
}

@book {Kah20,
    AUTHOR = {Kahn, Bruno},
     TITLE = {Zeta and {$L$}-functions of varieties and motives},
    SERIES = {London Mathematical Society Lecture Note Series},
    VOLUME = {462},
      NOTE = {Translated from the 2018 French original [3839285]},
 PUBLISHER = {Cambridge University Press, Cambridge},
      YEAR = {2020},
     PAGES = {vii+207},
      ISBN = {978-1-108-70339-0},
   MRCLASS = {11G40 (11-02 14C15 14G10)},
  MRNUMBER = {4382436},
       DOI = {10.1017/9781108691536},
       URL = {},
}

@article {Lal10,
    AUTHOR = {Lal\'in, Matilde},
     TITLE = {On a conjecture by {B}oyd},
   JOURNAL = {Int. J. Number Theory},
  FJOURNAL = {International Journal of Number Theory},
    VOLUME = {6},
      YEAR = {2010},
    NUMBER = {3},
     PAGES = {705--711},
      ISSN = {1793-0421,1793-7310},
   MRCLASS = {11R06 (11G40 11G55 19F27)},
  MRNUMBER = {2652904},
MRREVIEWER = {D.\ Boyd},
       DOI = {10.1142/S1793042110003174},
       URL = {},
}

@article {Lal15,
    AUTHOR = {Lal\'in, Matilde},
     TITLE = {Mahler measure and elliptic curve {$L$}-functions at {$s=3$}},
   JOURNAL = {J. Reine Angew. Math.},
  FJOURNAL = {Journal f\"ur die Reine und Angewandte Mathematik. [Crelle's
              Journal]},
    VOLUME = {709},
      YEAR = {2015},
     PAGES = {201--218},
      ISSN = {0075-4102,1435-5345},
   MRCLASS = {11R06 (14G10)},
  MRNUMBER = {3430879},
MRREVIEWER = {Lei\ Yang},
       DOI = {10.1515/crelle-2013-0107},
       URL = {},
}

@article {LN23,
    AUTHOR = {Lal\'in, Matilde and Nair, Siva Sankar},
     TITLE = {An invariant property of {M}ahler measure},
   JOURNAL = {Bull. Lond. Math. Soc.},
  FJOURNAL = {Bulletin of the London Mathematical Society},
    VOLUME = {55},
      YEAR = {2023},
    NUMBER = {3},
     PAGES = {1129--1142},
      ISSN = {0024-6093,1469-2120},
   MRCLASS = {11R06 (11M06 11R42)},
  MRNUMBER = {4599103},
MRREVIEWER = {Kevin\ G.\ Hare},
}

@article {LR07,
    AUTHOR = {Lal\'in, Matilde N. and Rogers, Mathew D.},
     TITLE = {Functional equations for {M}ahler measures of genus-one
              curves},
   JOURNAL = {Algebra Number Theory},
  FJOURNAL = {Algebra \& Number Theory},
    VOLUME = {1},
      YEAR = {2007},
    NUMBER = {1},
     PAGES = {87--117},
      ISSN = {1937-0652,1944-7833},
   MRCLASS = {11R09 (11F66 19F27 33C05 33C20 33D15)},
  MRNUMBER = {2336636},
MRREVIEWER = {Christophe\ Doche},
       DOI = {10.2140/ant.2007.1.87},
       URL = {},
}

@article {LSZ16,
    AUTHOR = {Lal\'in, Matilde and Samart, Detchat and Zudilin, Wadim},
     TITLE = {Further explorations of {B}oyd's conjectures and a conductor
              21 elliptic curve},
   JOURNAL = {J. Lond. Math. Soc. (2)},
  FJOURNAL = {Journal of the London Mathematical Society. Second Series},
    VOLUME = {93},
      YEAR = {2016},
    NUMBER = {2},
     PAGES = {341--360},
      ISSN = {0024-6107,1469-7750},
   MRCLASS = {11R06 (11F67 11G05 11G16 19F27 33C75 33E05)},
  MRNUMBER = {3483117},
MRREVIEWER = {Michael\ J.\ Mossinghoff},
       DOI = {10.1112/jlms/jdv073},
       URL = {},
}

@article {Mah62,
    AUTHOR = {Mahler, Kurt},
     TITLE = {On some inequalities for polynomials in several variables},
   JOURNAL = {J. London Math. Soc.},
  FJOURNAL = {The Journal of the London Mathematical Society},
    VOLUME = {37},
      YEAR = {1962},
     PAGES = {341--344},
      ISSN = {0024-6107,1469-7750},
   MRCLASS = {10.30 (32.05)},
  MRNUMBER = {138593},
MRREVIEWER = {L.\ Mirsky},
       DOI = {10.1112/jlms/s1-37.1.341},
       URL = {},
}

@article {Mai03,
    AUTHOR = {Maillot, Vincent},
     TITLE = {Mahler measure in Arakelov geometry},
   JOURNAL = {},
  FJOURNAL = {},
    VOLUME = {},
    NOTE   = {Workshop lecture at ``The many aspects of Mahler's measure'', Banff International Research Station},
      YEAR = {2003},
     PAGES = {},
      ISSN = {},
   MRCLASS = {},
  MRNUMBER = {},
MRREVIEWER = {},
       DOI = {},
       URL = {},
}

@book {MNP13,
    AUTHOR = {Murre, Jacob P. and Nagel, Jan and Peters, Chris A. M.},
     TITLE = {Lectures on the theory of pure motives},
    SERIES = {University Lecture Series},
    VOLUME = {61},
 PUBLISHER = {American Mathematical Society, Providence, RI},
      YEAR = {2013},
     PAGES = {x+149},
      ISBN = {978-0-8218-9434-7},
   MRCLASS = {14-02 (14C15 14C25 19E15)},
  MRNUMBER = {3052734},
MRREVIEWER = {Florence\ Lecomte},
       DOI = {10.1090/ulect/061},
       URL = {},
}

@incollection {Nek13,
    AUTHOR = {Nekov\'a\v r, Jan},
     TITLE = {Be\u ilinson's conjectures},
 BOOKTITLE = {Motives ({S}eattle, {WA}, 1991)},
    SERIES = {Proc. Sympos. Pure Math.},
    VOLUME = {55, Part 1},
     PAGES = {537--570},
 PUBLISHER = {Amer. Math. Soc., Providence, RI},
      YEAR = {1994},
      ISBN = {0-8218-1636-5},
   MRCLASS = {11G09 (11F67 11G40 14D07 14G10 19F27)},
  MRNUMBER = {1265544},
MRREVIEWER = {Alexey\ A.\ Panchishkin},
       DOI = {10.1090/pspum/055.1/1265544},
       URL = {},
}

@manual{PARI,
    author = "{PARI}",
      organization = "{The PARI~Group}",
      title        = "{PARI/GP version \texttt{2.15.4}}",
      year         = 2023,
      address      = "Univ. Bordeaux",
      note         = "available from \url{http://pari.math.u-bordeaux.fr/}",
    }

@InCollection{Rod99,
 Author = {Rodriguez Villegas, F.},
 Title = {Modular {Mahler} measures. {I}},
 BookTitle = {Topics in number theory. In honor of B. Gordon and S. Chowla. Proceedings of the conference, Pennsylvania State University, University Park, PA, USA, July 31--August 3, 1997},
 ISBN = {0-7923-5583-0},
 Pages = {17--48},
 Year = {1999},
 Publisher = {Dordrecht: Kluwer Academic Publishers},
  
 Keywords = {11F11,14C35,11R09,11R06,11R42,14J32},
 zbMATH = {1306915},
 Zbl = {0980.11026}
}

@Article{RZ12,
 Author = {Rogers, Mathew and Zudilin, Wadim},
 Title = {From {{\(L\)}}-series of elliptic curves to {Mahler} measures},
 FJournal = {Compositio Mathematica},
 Journal = {Compos. Math.},
 ISSN = {0010-437X},
 Volume = {148},
 Number = {2},
 Pages = {385--414},
 Year = {2012},
  
 DOI = {10.1112/S0010437X11007342},
 Keywords = {11R06,33C20,11F03,11G05,19F27,33C75,33E05},
 zbMATH = {6028612},
 Zbl = {1260.11062}
}

@Article{RZ14,
 Author = {Rogers, Mathew and Zudilin, Wadim},
 Title = {On the {Mahler} measure of {{\(1+X+1/X+Y +1/Y\)}}},
 FJournal = {IMRN. International Mathematics Research Notices},
 Journal = {Int. Math. Res. Not.},
 ISSN = {1073-7928},
 Volume = {2014},
 Number = {9},
 Pages = {2305--2326},
 Year = {2014},
  
 DOI = {10.1093/imrn/rns285},
 Keywords = {11R06,11G05},
 zbMATH = {6369465},
 Zbl = {1378.11091}
}

@incollection {Scho94,
    AUTHOR = {Scholl, A. J.},
     TITLE = {Classical motives},
 BOOKTITLE = {Motives ({S}eattle, {WA}, 1991)},
    SERIES = {Proc. Sympos. Pure Math.},
    VOLUME = {55, Part 1},
     PAGES = {163--187},
 PUBLISHER = {Amer. Math. Soc., Providence, RI},
      YEAR = {1994},
      ISBN = {0-8218-1636-5},
   MRCLASS = {11G09 (11F32 14A20 14H35 18E15 18F10 18F15)},
  MRNUMBER = {1265529},
MRREVIEWER = {Alexey\ A.\ Panchishkin},
       DOI = {10.1090/pspum/055.1/1265529},
       URL = {},
}

@InCollection {Sus91,
    AUTHOR = {Suslin, Andrei},
     TITLE = {{$K_3$} of a field, and the {B}loch group},
      NOTE = {Translated in Proc.\ Steklov Inst.\ Math.\ {\bf 1991}, no.\ 4,
              217--239,
              Galois theory, rings, algebraic groups and their applications
              (Russian)},
   JOURNAL = {Trudy Mat. Inst. Steklov.},
  FJOURNAL = {Akademiya Nauk SSSR. Trudy Matematicheskogo Instituta imeni V.
              A. Steklova},
    VOLUME = {183},
      YEAR = {1990},
     PAGES = {180--199, 229},
      ISSN = {0371-9685},
   MRCLASS = {19D45 (11S70)},
  MRNUMBER = {1092031},
MRREVIEWER = {L.\ N.\ Vaserstein and E.\ R.\ Wheland},
}

@incollection {VSF00,
 Author = {Friedlander, Eric M. and Suslin, Andrei and Voevodsky, Vladimir},
 Title = {Introduction},
 BookTitle = {Cycles, transfers, and motivic homology theories},
 ISBN = {0-691-04815-0},
 Pages = {3--9},
 Year = {2000},
 Publisher = {Princeton, NJ: Princeton University Press},
 Keywords = {14F42,14C25,14C35},
 zbMATH = {1526534},
 Zbl = {1019.14008},
}

@Book{Wei13,
 Author = {Weibel, Charles A.},
 Title = {The {{\(K\)}}-book. An introduction to algebraic {{\(K\)}}-theory},
 FSeries = {Graduate Studies in Mathematics},
 Series = {Grad. Stud. Math.},
 ISSN = {1065-7338},
 Volume = {145},
 ISBN = {978-0-8218-9132-2},
 Year = {2013},
 Publisher = {Providence, RI: American Mathematical Society (AMS)},
 Keywords = {19-00,19-01,19Axx,19Bxx,19Cxx,19Dxx,19Fxx,19L64,19E08,19F27,19M05},
 zbMATH = {6123945},
 Zbl = {1273.19001}
}

@Misc{Zag91,
 Author = {Zagier, Don},
 Title = {Polylogarithms, {Dedekind} zeta functions, and the algebraic {K}-theory of fields},
 Year = {1991},
 HowPublished = {Arithmetic algebraic geometry, {Proc}. {Conf}., {Texel}/{Neth}. 1989, {Prog}. {Math}. 89, 391-430 (1991).},
 Keywords = {11R42,11R70,19F27},
 zbMATH = {4202408},
 Zbl = {0728.11062}
}

@Article{Zud14,
 Author = {Zudilin, Wadim},
 Title = {Regulator of modular units and {Mahler} measures},
 FJournal = {Mathematical Proceedings of the Cambridge Philosophical Society},
 Journal = {Math. Proc. Camb. Philos. Soc.},
 ISSN = {0305-0041},
 Volume = {156},
 Number = {2},
 Pages = {313--326},
 Year = {2014},
 DOI = {10.1017/S0305004113000765},
 Keywords = {11R06,11F11,11M36,11R27},
 zbMATH = {6283035},
 Zbl = {1386.11129}
}

\end{document}